\documentclass[11pt,reqno]{amsart}

\title[Remarks on discrete subgroups with full limit sets]{Remarks on discrete subgroups with full limit sets in higher rank Lie groups}
\author{Subhadip Dey}
\address{School of Mathematics, Tata Institute of Fundamental Research, Mumbai 400005, India
}
\email{subhadip@math.tifr.res.in}

\author{Sebastian Hurtado}
\address{Department of Mathematics, Yale University, New Haven, CT 06511}
\email{sebastian.hurtado-salazar@yale.edu} 

\subjclass{22E40, 53C35, 14M15}
\keywords{Discrete subgroups, limit sets}

\date{\today}

\addtolength{\hoffset}{-1.5cm}
\addtolength{\textwidth}{3cm}

\usepackage[inline]{enumitem}
\setenumerate{label=(\roman*),itemsep=0pt}
\usepackage{amssymb}
\usepackage[dvipsnames]{xcolor}
\usepackage{mathtools}
\usepackage[T1]{fontenc}
\usepackage[utf8]{inputenc}
\usepackage{hyperref} 
\hypersetup{
    colorlinks=true,
    linkcolor=blue,
    hypertexnames=false,
    filecolor=black,      
    urlcolor=black,
    citecolor=blue,
    linktocpage=true,
    unicode=true
}

\usepackage[]{cleveref}
\theoremstyle{plain}
\newtheorem{theorem}{Theorem}[section]
\newtheorem{lemma}[theorem]{Lemma}
\newtheorem{proposition}[theorem]{Proposition}
\newtheorem{corollary}[theorem]{Corollary}
\newtheorem{question}[theorem]{Question}
\theoremstyle{definition}

\theoremstyle{definition}
\newtheorem{definition}[theorem]{Definition}

\newtheorem{condition}[theorem]{Condition}
\theoremstyle{remark}
\newtheorem{remark}[theorem]{Remark}

\newcommand{\op}[1]{\operatorname{#1}}

\def\a{\alpha}

\def\B{\mathcal{B}}
\def\C{\mathcal{C}}

\def\E{\mathcal{E}}

\def\F{\mathcal{F}}
\def\G{\Gamma}
\def\g{\gamma}

\newcommand{\Hquad}{\hspace{0.5em}}
\newcommand{\GE}{\hspace{0.5em}\ge\hspace{0.5em}}
\newcommand{\LE}{\hspace{0.5em}\le\hspace{0.5em}}

\def\CC{\mathbb{C}}

\DeclareMathOperator{\Flag}{Flag}

\DeclareMathOperator{\Hdim}{dim_{\it H}}
\DeclareMathOperator{\Pdim}{\overline{dim}_{\it p}}
\DeclareMathOperator{\PP}{\mathbb{P}}

\DeclareMathOperator{\SL}{SL}
\DeclareMathOperator{\PSL}{PSL}
\DeclareMathOperator{\SO}{SO}

\newcommand{\mf}[1]{\mathfrak{#1}}
\def\<{\langle}
\def\>{\rangle}

\def\LL{\mathcal{L}}
\def\N{\mathbb{N}}
\def\R{\mathbb{R}}

\def\Z{\mathbb{Z}}
\def\liea{\mathfrak{a}}

\DeclareMathOperator{\rchi}{{\mathpalette\irchi\relax}}
\newcommand{\irchi}[2]{\raisebox{\depth}{$#1\chi$}}

\usepackage{soul}

\begin{document}

\begin{abstract}
    We show that real semi-simple Lie groups of higher rank contain (infinitely generated) discrete subgroups with full limit sets in the corresponding Furstenberg boundaries. Additionally, we provide criteria under which discrete subgroups of $G = \SL(3,\R)$ must have a full limit set in the Furstenberg boundary of $G$.
    
    In the appendix, we show the existence of Zariski-dense discrete subgroups $\G$ of $\SL(n,\R)$, where $n\ge 3$, such that the Jordan projection of some loxodromic element $\g \in\G$ lies on the boundary of the limit cone of $\G$.
\end{abstract}

\maketitle

\setcounter{tocdepth}{1}
\tableofcontents

\section{Introduction}

Let $G$ be a real non-compact semi-simple Lie group with finite center (e.g. $\SL(n, \R)$, $ n \geq 2$), $X$ be the corresponding symmetric space where $G$ acts by isometries, $P$ be a minimal parabolic subgroup of $G$, and $G/P$ be the Furstenberg boundary of $G$.
It is known that if $\Gamma$ is a Zariski-dense discrete subgroup of $G$, then there exists a unique nonempty minimal $\Gamma$-invariant closed set $\Lambda_{\Gamma} \subset G/P$, known as the {\em limit set} of $\Gamma$. In the case where $\Gamma$ is a lattice in $G$, the limit set is {\em full}, i.e. $\Lambda_{\Gamma} = G/P$.

\medskip

Our motivation for this article is the following question:

\begin{question}\label{ques:minimal_lattice}
Suppose that $G$ is simple real algebraic group with real rank at least two (e.g. $\SL(n,\R)$, $n \geq 3$) and let $\Gamma$ be a finitely generated Zariski-dense discrete subgroup of $G$. If $\Gamma$ has a full limit set, i.e. $\Lambda_\Gamma = G/P$, then must  $\Gamma$ be a lattice in $G$?
\end{question}

It is known that \Cref{ques:minimal_lattice} has a negative answer for rank one groups (cf. \Cref{rem:full_rank_one}).
However, there are some reasons to believe why this assertion could be true in higher rank. It was a great achievement in the theory of finitely generated Kleinian groups (i.e., discrete subgroups of $\PSL(2,\CC)$) that a full limit set implies ergodicity. (This follows from the work of Thurston, Canary, Calegari-Gabai, and Agol, see \cite{canary2010marden}). Thus, one can expect that the following open question has a positive answer:

\begin{question}\label{ques:minimal_ergodic}
Let $G$ and $\Gamma$ be as in \Cref{ques:minimal_lattice}. If $\Gamma$ has a full limit set, then must $\Gamma$ act ergodically on $G/P$?
\end{question}

Moreover, in the higher rank case, the following has been conjectured by Margulis:\footnote{This is a question originally due to Schoen-Yau in the more general setting of Riemannian manifolds of non-positive curvature. See problem 47, page 387, \cite{Schoen-Yau-Lectures}}

\begin{question} [Margulis conjecture] \label{ques:margulis_conj}
Let $G$ be as in \Cref{ques:minimal_lattice}.
If a discrete subgroup $\Gamma$ of $G$ acts ergodically on $G/P$ (equivalently, $X/\Gamma$ does not admit a non-constant bounded harmonic function),
then must $\Gamma$ be a lattice in $G$?
\end{question}

Although this question remains unsolved, using Kazhdan's property (T), Margulis \cite{margulis1997existence} proved that if $\Gamma$ acts ergodically on the product of $G/P \times G/P$ (or, equivalently, $G/A$), then $\Gamma$ is a lattice. Furthermore, a recent breakthrough by Fraczyk-Gelander \cite{fraczyk2023infinite} represents a significant advancement in this direction; they showed that if $X/\Gamma$ has a uniform upper bound on its injectivity radius, then $\Gamma$ is a lattice in $G$.

\medskip

A positive answer to \Cref{ques:minimal_lattice} can also provide a reasonable test to decide when a given subgroup of a lattice is thin or a finite index subgroup. To illustrate this, we show in \Cref{sec:criterion} the following criteria in the special case when $\Gamma = \SL(3,\Z)$ and $G = \SL(3,\R)$:

\begin{theorem}\label{intro:thm1}
Let $\G$ be a Zariski-dense subgroup of $\SL(3,\R)$, which is contained in $\SL(3,\Z)$ and satisfies one of the following:
\begin{enumerate}
\item $\Gamma$ contains a subgroup isomorphic to $\Z^2$.\label{intro:thm1:case1}
\item $\Gamma$ contains a standard copy of $\SL(2,\Z)$.
\item $\Gamma$ contains an infinite order element with one complex (non-real) eigenvalue. \label{intro:thm1:case3}
\end{enumerate}
Then $\Gamma$ acts minimally on $G/P$.
\end{theorem}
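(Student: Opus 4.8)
The plan is to work throughout in the flag variety $G/P$, which for $G=\SL(3,\R)$ is the variety of full flags $(\ell,H)$ in $\R^3$ ($\dim\ell=1$, $\dim H=2$, $\ell\subset H$), equipped with the two $G$-equivariant projections $\pi_1\colon(\ell,H)\mapsto\ell\in\PP(\R^3)$ and $\pi_2\colon(\ell,H)\mapsto H\in\PP((\R^3)^*)$. Since $\Gamma$ is Zariski-dense it acts strongly irreducibly and proximally on $\R^3$ and on $(\R^3)^*$, so there are unique $\Gamma$-minimal closed sets $\Lambda'\subset\PP(\R^3)$, $\Lambda''\subset\PP((\R^3)^*)$ (the limit sets in these projective planes); being equivariant images of $\Lambda_\Gamma$ they satisfy $\pi_1(\Lambda_\Gamma)=\Lambda'$, $\pi_2(\Lambda_\Gamma)=\Lambda''$, and $\Lambda_\Gamma$ is Zariski-dense in $G/P$ (its Zariski closure is $G$-invariant). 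Call $\pi_1^{-1}(p)\cong\RP^1$ a \emph{$\pi_1$-fibre circle} and $\pi_2^{-1}(q)\cong\RP^1$ a \emph{$\pi_2$-fibre circle}. The crux is the reduction: \emph{if $\Lambda_\Gamma$ contains one fibre circle of each type, then $\Lambda_\Gamma=G/P$.} Indeed, from $\pi_1^{-1}(p_0)\subset\Lambda_\Gamma$, translating by $\Gamma$ gives $\pi_1^{-1}(\Gamma p_0)\subset\Lambda_\Gamma$; taking closures (using that $\pi_1$ is a locally trivial $\RP^1$-bundle, so $\pi_1^{-1}(\overline S)\subset\overline{\pi_1^{-1}(S)}$) and minimality of $\Lambda'$ (whence $\overline{\Gamma p_0}\supset\Lambda'$) gives $\pi_1^{-1}(\Lambda')\subset\Lambda_\Gamma$, and symmetrically $\pi_2^{-1}(\Lambda'')\subset\Lambda_\Gamma$. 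If also $\pi_2^{-1}(q_0)\subset\Lambda_\Gamma$ with $q_0=\PP(W_0)$, then $\PP(W_0)=\pi_1(\pi_2^{-1}(q_0))\subset\Lambda'$ is a projective line, so $\Lambda_\Gamma\supset\pi_1^{-1}(\PP(W_0))$ and $\Lambda''\supset\pi_2(\pi_1^{-1}(\PP(W_0)))=\{H:H\cap W_0\neq 0\}=\PP((\R^3)^*)$ (any two $2$-planes in $\R^3$ meet in a line); hence $\Lambda_\Gamma\supset\pi_2^{-1}(\Lambda'')=G/P$. The same computation yields a variant I will use: a $\pi_1$-fibre circle together with any projective line inside $\Lambda'$ already forces $\Lambda''=\PP((\R^3)^*)$.

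It thus suffices to produce two fibre circles of opposite type in each case. \textbf{Case (iii).} An elementary number-theoretic argument shows that an infinite-order $\gamma\in\SL(3,\Z)$ with a non-real eigenvalue has irreducible integral characteristic polynomial with a complex place, which forces the complex eigenvalue $z$ to satisfy $|z|\neq 1$ and $z/|z|$ not a root of unity (otherwise $z^N$ would be a positive rational, incompatible with the splitting field being $S_3$). Hence, after replacing $\gamma$ by $\gamma^{-1}$ so that $|\lambda|>|z|$ for the real eigenvalue $\lambda$, $\gamma$ is proximal on $\PP(\R^3)$ with attracting line $\langle v_\lambda\rangle$ and acts on the circle $\PP(V)$ ($V$ the real $2$-plane of $z,\bar z$) by an irrational rotation. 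For a generic flag $F$ one checks that the accumulation set of $(\gamma^nF)_{n\ge 1}$ is $\pi_1^{-1}(\langle v_\lambda\rangle)$ and that of $(\gamma^{-n}F)_{n\ge 1}$ is $\pi_2^{-1}(V)$; taking $F\in\Lambda_\Gamma$ generic puts both circles into $\Lambda_\Gamma$. \textbf{Case (ii).} Let $\Delta\cong\SL(2,\Z)$ be the standard copy, preserving $V=\langle e_1,e_2\rangle$ and fixing $\langle e_3\rangle$. The repelling flags of hyperbolic $g\in\Delta$ are $(\langle e_3\rangle,\langle e_3\rangle\oplus\ell_g^-)$ with $\ell_g^-$ dense in $\PP(V)$ (minimality of $\Delta$ on $\RP^1$), so their closure is the $\pi_1$-fibre circle $\pi_1^{-1}(\langle e_3\rangle)\subset\Lambda_\Gamma$, while the attracting lines give the projective line $\PP(V)\subset\Lambda'$. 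By the variant, $\Lambda''=\PP((\R^3)^*)$, so $V\in\Lambda''$ and some flag $(\ell_0,V)$ lies in $\Lambda_\Gamma$; since $\Delta$ fixes $V$ and is minimal on $\PP(V)$, the closure of its $\Delta$-orbit is the $\pi_2$-fibre circle $\pi_2^{-1}(V)\subset\Lambda_\Gamma$. The reduction applies in both cases.

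\textbf{Case (i).} Let $A\cong\Z^2\leq\Gamma$. A structural analysis of $\SL(3,\Z)$ shows that, after passing to a finite-index subgroup, either $A$ is unipotent — hence a cocompact lattice in one of the finitely many conjugacy classes of $2$-dimensional abelian unipotent subgroups $N\cong\R^2$ of $\SL(3,\R)$ — or $A$ consists of loxodromic elements and is the units of a totally real cubic field, hence a cocompact lattice in a maximal $\R$-split torus $N\cong\R^2$; no intermediate possibility occurs, since a loxodromic element of $\SL(3,\Z)$ is of irreducible cubic or of $\SL_2$-type (with centralizers of rank $2$ resp.\ $\le 1$), and non-loxodromic infinite-order elements are unipotent. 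In either case $A$ is a cocompact lattice in an abelian $N\cong\R^2$ acting on $G/P$, and for a generic flag $F$ one computes, conjugacy type by type, that the boundary at infinity of the orbit $N\cdot F$ contains a $\pi_1$-fibre circle and a $\pi_2$-fibre circle (e.g.\ for $N=\{I+aE_{13}+bE_{23}\}$ one obtains $\pi_2^{-1}(\langle e_1,e_2\rangle)$ by letting $a/b$ vary, and, from the exceptional slope, a flag whose further $N$-orbit closure is a $\pi_1$-fibre circle; the split torus yields all six fibre circles over its eigen-lines and eigen-planes by tuning the path to infinity). Since $A$ is cocompact in $N$ and lattice directions in $\R^2$ are dense at infinity, a Dirichlet-type approximation shows these same circles already lie in $\overline{A\cdot F}$; choosing $F\in\Lambda_\Gamma$ generic then finishes the case.

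\textbf{The hard part} is the orbit-closure analysis of Case (i): besides the $\SL(3,\Z)$-structure theory restricting the allowed $\Z^2$'s, one must (a) carry out the "boundary at infinity" computation for each conjugacy type of $2$-dimensional abelian $N$, extracting fibre circles of both types — including a delicate "second-round" sweep that requires a certain slope determined by $F$ to be irrational relative to the lattice $A$, which is where genericity of $F$ inside the uncountable, Zariski-dense, perfect set $\Lambda_\Gamma$ must be invoked — and (b) bridge the gap between $\overline{N\cdot F}$ and $\overline{A\cdot F}$ by approximating the relevant rays and lines in $N\cong\R^2$ by lattice points of $A$. Cases (ii) and (iii) are comparatively soft once the reduction lemma and the number-theoretic input in (iii) are in place.
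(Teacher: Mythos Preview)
Your reduction lemma (a $\pi_1$-fibre circle and a $\pi_2$-fibre circle in $\Lambda_\Gamma$ force $\Lambda_\Gamma=G/P$) is correct and gives a clean packaging; the paper instead argues in each case by first proving minimality of $\Gamma$ on $\PP(\R^3)$ and then lifting via one fibre. Case~(iii) is fine and essentially parallels the paper.

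There is a genuine error in Case~(ii). For a hyperbolic $g\in\Delta$ with eigenvalues $\mu>1>\mu^{-1}$ on $V=\langle e_1,e_2\rangle$ and $1$ on $\langle e_3\rangle$, the eigenvalue ordering in $\SL(3,\R)$ is $\mu>1>\mu^{-1}$, so the attracting flag is $(\ell_g^+,\,\ell_g^+\oplus\langle e_3\rangle)$ and the repelling flag is $(\ell_g^-,\,\ell_g^-\oplus\langle e_3\rangle)$, \emph{not} $(\langle e_3\rangle,\,\langle e_3\rangle\oplus\ell_g^-)$. The latter is a fixed flag of $g$ but is neither attracting nor repelling, so it is not a priori in $\Lambda_\Gamma$; consequently you have not produced $\pi_1^{-1}(\langle e_3\rangle)\subset\Lambda_\Gamma$, and the variant you invoke does not apply. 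What you do get from the genuine attracting/repelling flags is only the ``diagonal'' circle $\{(\ell,\ell\oplus\langle e_3\rangle):\ell\in\PP(V)\}$, which is neither a $\pi_1$- nor a $\pi_2$-fibre. The paper's fix is to first show $\Lambda'=\PP(\R^3)$ (using Zariski-density together with the $\Delta$-dynamics), which then places $\langle e_3\rangle$ in $\Lambda'$ and lets you run your $\Delta$-minimality argument on the fibre $\pi_1^{-1}(\langle e_3\rangle)$.

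Case~(i) also has gaps. In the diagonal subcase, the $N$-orbit closure of a generic flag meets each fibre circle only in an arc (one sign sector), not the whole circle; the paper explicitly invokes Zariski-density at this point to upgrade half-lines to full projective lines, and you should too. More seriously, in the unipotent subcase your ``second-round sweep'' requires a flag $F=(\ell_0,H_0)\in\Lambda_\Gamma$ with $H_0=[\alpha:\beta:\gamma]$ and $\alpha/\beta\notin\Q$ (so that $\{a\alpha+b\beta:(a,b)\in\Z^2\}$ is dense in $\R$). Appealing to ``genericity of $F$ in the uncountable, Zariski-dense, perfect set $\Lambda_\Gamma$'' does not suffice: Zariski-density says nothing about avoiding a \emph{countable} union of proper subvarieties, and an uncountable perfect set can sit inside such a union. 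The paper sidesteps this entirely by citing Benoist--Oh in the unipotent case (which gives the much stronger conclusion that $\Gamma$ is a lattice); if you want to avoid that citation you will need an independent argument producing such an irrational $H_0$ in $\Lambda''$.
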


Therefore, an affirmative answer \Cref{ques:minimal_lattice} would imply the following about subgroups of $\SL(3,\Z)$:
\begin{enumerate}
\item $\Z^2 \star \Z$ is not isomorphic to a subgroup of $\SL(3,\Z)$; this is a long standing question of M. Kapovich, see \cite{kapovich2023list}.
\item If $\Gamma \subset \SL(3,\Z)$ contains a copy of $\SL(2,\Z)$ and is Zariski-dense, then $\Gamma$ must be a lattice.
\end{enumerate}

\begin{remark}\label{rem:full_rank_one}
    (1) 
    The analog of \Cref{ques:minimal_lattice} for rank-one real Lie groups $G$ has a negative answer. Riley \cite{Riley} discovered that the figure-eight knot complement admits a finite-volume hyperbolic structure, making it the first known hyperbolic knot. This 3-manifold was already known to fiber over the circle \cite{BurdeZieschang,Goldsmith}. See also J\o{}rgensen \cite{Jorgensen}. Soon after, Thurston \cite{Thurston-fiber} developed a deformation theory of surface group representations into $\operatorname{PSL}(2,\mathbb{C})$, showing that many closed hyperbolic 3-manifolds fiber over the circle. The fiber subgroups are infinite-index normal subgroups of $\pi_1(M)$ with full limit set in $\partial_\infty \mathbb{H}^3 = \mathbb{S}^2$.

    Higher-dimensional examples of finitely generated Kleinian groups with full limit sets were pointed out by Bruno Martelli: The work of Kielak \cite{Kielak} shows that finitely generated RFRS groups with vanishing first $L^2$-Betti number virtually admit algebraic fibrations, i.e. homomorphisms onto $\mathbb{Z}$ with finitely generated kernel. By the work of Bergeron–Haglund–Wise \cite[Thm.~1.10]{Bergeron-Haglund-Wise}, arithmetic groups of the first type in $\SO_0(n,1)$ virtually embed in right-angled Coxeter groups and, by Agol’s result \cite{Agol-virtual-fibering}, are thus virtually RFRS. It then follows from Kielak’s theorem that such arithmetic groups virtually algebraically fiber; the kernels of such fibrations are finitely generated and have full limit set. For more explicit examples in $\SO_0(n,1)$, where $4 \leq n \leq 8$, we refer to the work of Italiano–Martelli–Migliorini \cite{italiano2020hyperbolic} and Jankiewicz–Norin–Wise \cite{JNW}.

    Stover \cite[Cor. 1.4]{Stover-cusp} exhibited the existence of finitely generated complex hyperbolic Kleinian groups with infinite covolume and full limit sets in all  dimensions $n \ge 2$.
    
    To the best of our knowledge, examples of such discrete subgroups are not known in the other rank one cases, i.e., when $G= \operatorname{Sp}(n,1)$ with $n\geq 2$ and $G =F_4^{-20}$.

    \smallskip\noindent
    (2) Among rank-one Lie groups, negative answers to \Cref{ques:minimal_ergodic} are known in at least two cases: $G = \operatorname{PU}(n,1)$ for $n = 2,3$. For $n=2$, Livné \cite{Livne}, and for $n=3$, Deraux \cite{Deraux-forgetful}, exhibited closed complex hyperbolic $n$-manifolds admitting non-constant holomorphic maps with connected fibers onto hyperbolic Riemann surfaces $\Sigma$. This induces a surjection $\varphi: \pi_1(S) \to \pi_1(\Sigma)$. Passing to the regular cover $\widetilde{M}$ of $M$ corresponding to the normal subgroup $N = \ker \varphi$, which can be taken to be finitely generated (see, e.g., \cite[Remark 3.4]{Dey-Liu}), one obtains a non-constant holomorphic map $f: \widetilde{M} \to \mathbb{D}$ onto the unit disk $\mathbb{D}$ in the complex plane, whose real part is a non-constant bounded harmonic function (see Kapovich \cite[Example 2.13]{Kapovich_survey}). Thus, $N$ does not act ergodically on $\partial_\infty \mathbb{H}_\mathbb{C}^n$, although $\Lambda_N = \partial_\infty \mathbb{H}_\mathbb{C}^n$.

\smallskip\noindent
    (3) In proving both \ref{intro:thm1:case1} and \ref{intro:thm1:case3} of \Cref{intro:thm1}, a key point is that $\SL(3,\Z)$ does not contain any singular semi-simple elements. 
    On the other hand, it is known that one can embed $\Z^2\star \Z$ discretely into $\SL(3,\R)$ and any such embedding must be Zariski-dense, see \cite{Soifer_supersingular}. However, known constructions of such discrete $\Z^2\star \Z$ in $\SL(3,\R)$  in the current literature involve a ``supersingular'' $\Z^2$, i.e., a free rank-two abelian subgroup of $\SL(3,\R)$ generated by two singular diagonalizable elements whose product is also singular.
\end{remark}

\subsection{Main result} Our main goal of this paper is to show that one cannot drop the condition of finite generation in \Cref{ques:minimal_lattice}, this answers a question raised by Lee and Oh (Question 1.5, \cite{lee2024dichotomy}).

\begin{theorem}\label{thm:main}
Every non-compact connected real semi-simple Lie group $G$ with finite center contains an infinitely generated discrete subgroup $\G$ that acts minimally on the Furstenberg boundary of $G$. 
\end{theorem}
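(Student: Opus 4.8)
The plan is to realize $\Gamma$ as an increasing union $\Gamma=\bigcup_n\Gamma_n$ of finitely generated free ``Schottky'' subgroups of $G$, arranged so that the limit sets $\Lambda_{\Gamma_1}\subseteq\Lambda_{\Gamma_2}\subseteq\cdots$ exhaust a dense subset of $G/P$. Recall from the introduction that for a Zariski-dense discrete subgroup $\Delta<G$ the limit set $\Lambda_\Delta\subseteq G/P$ is the unique minimal closed $\Delta$-invariant set, and that $\Lambda_\Delta$ is the closure of the set of attracting fixed flags of loxodromic elements of $\Delta$. Hence it suffices to produce a discrete, Zariski-dense, infinitely generated $\Gamma<G$ for which the attracting flags of loxodromic elements are dense in $G/P$: then $\Lambda_\Gamma=G/P$, so the unique minimal closed invariant set is $G/P$ and $\Gamma$ acts minimally; infinite generation will be built in, and it forces $\Gamma$ not to be a lattice since lattices in $G$ are finitely generated.

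The dynamical input is this: for any transverse pair $(\xi^+,\xi^-)\in G/P\times G/P$ there is a maximal $\R$-split torus fixing both, whose suitably regular elements $g$ are loxodromic with attracting flag $\xi^+$ and repelling flag $\xi^-$, and which can be chosen as strongly ``$\epsilon$-contracting'' as desired: $g$ sends the complement of the $\epsilon$-neighborhood of the repelling hyperplane $\mathcal H^-=\{F:F\not\pitchfork\xi^-\}$ into the $\epsilon$-ball about $\xi^+$, and $g^{-1}$ does likewise with the roles of $\xi^\pm$ and $\mathcal H^\pm=\{F:F\not\pitchfork\xi^\pm\}$ interchanged. Applying the ping-pong lemma to finitely many such $g_1,\dots,g_n$ whose fixed flags are pairwise distinct and pairwise transverse — a condition always achievable by small perturbation, since non-transversality loci are proper subvarieties — gives $\langle g_1,\dots,g_n\rangle=\langle g_1\rangle\ast\cdots\ast\langle g_n\rangle$; furthermore any nontrivial reduced word then carries a set of almost full diameter into a tiny $\epsilon_i$-ball and so is quantitatively far from the identity in $G$, which makes this free group discrete. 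To keep the \emph{union} discrete I would choose each new generator with axis in the symmetric space $X$ at distance $>n$ from the $\Gamma_n$-orbit of a fixed basepoint, so that by a standard combination estimate no new short elements appear; and I would send the contraction constants $\epsilon_n\to0$ fast enough that the $\epsilon_i$-neighborhoods of all repelling hyperplanes $\mathcal H^\pm_{g_i}$ used up to any stage never cover $G/P$ — their union (the ``walls'') always has nonempty open complement (the ``gaps'').

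The heart of the matter is making $\bigcup_n\Lambda_{\Gamma_n}$ dense while preserving this Schottky structure. Fix a countable basis $\{B_k\}$ of $G/P$. At stage $n+1$ I would take an as-yet-uncaptured $B_k$ and force $\Lambda_{\Gamma_{n+1}}\cap B_k\neq\emptyset$. If $B_k$ meets a gap of the current configuration, place $\xi^+_{g_{n+1}}$ there in general position, pick a generic transverse $\xi^-_{g_{n+1}}$, and let $g_{n+1}$ be a deep, far-out loxodromic with these fixed flags; then $\Gamma_{n+1}=\Gamma_n\ast\langle g_{n+1}\rangle$ is again free, discrete and Schottky, and $\Lambda_{\Gamma_{n+1}}\ni\xi^+_{g_{n+1}}\in B_k$. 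If instead $B_k$ is ``blocked'', i.e. contained in the $\epsilon_j$-neighborhood of some repelling hyperplane $\mathcal H^\pm_{g_j}$, I exploit that, this hyperplane being a repelling locus of $g_j^{\mp1}$, a small ball lying near it is \emph{expanded} by the iterates $g_j^{\mp m}$ and dragged across $G/P$; so for suitable $m$ a small sub-ball $B^\sharp\subseteq B_k$ satisfies $g_j^{\mp m}(B^\sharp)\cap(\text{gap})\neq\emptyset$. Applying the unblocked case to a ball inside that intersection produces a new generator $g_{n+1}$ with $\xi^+_{g_{n+1}}\in g_j^{\mp m}(B^\sharp)$, and then the conjugate $g_j^{\pm m}g_{n+1}g_j^{\mp m}\in\Gamma_{n+1}$ is loxodromic with attracting flag $g_j^{\pm m}\xi^+_{g_{n+1}}\in B^\sharp\subseteq B_k$. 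Iterating over all $k$ gives $\Lambda_\Gamma=\overline{\bigcup_n\Lambda_{\Gamma_n}}=G/P$. Taking $g_1,g_2$ to generate a Zariski-dense free subgroup (a generic choice) makes each $\Gamma_n$, hence $\Gamma$, Zariski dense, and $\Gamma_1\subsetneq\Gamma_2\subsetneq\cdots$ makes $\Gamma$ infinitely generated.

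The step I expect to be the main obstacle is exactly this density step. A finitely generated ping-pong group necessarily has a nowhere-dense limit set, so a full limit set can only be forced in the limit, by having the successive Schottky pieces refine one another everywhere — and this must be reconciled with three competing demands on each new generator: strong contraction ($\epsilon_n\to0$), so that gaps persist; far-out axes, so that $\bigcup_n\Gamma_n$ stays discrete; and general position, so that freeness and discreteness of each $\Gamma_n$ survive. The ``blocked target'' case is where the naive approach fails, and making the expand-and-conjugate mechanism work uniformly — in particular verifying that a small ball near a wall genuinely lands in a gap under iteration, rather than merely sliding along the walls — is the delicate point; it is handled by a diagonal bookkeeping which at every finite stage faces only finitely many walls and flags and can always use that proper subvarieties of $G/P$ are nowhere dense. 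The remaining points (the quantitative combination estimate for discreteness of the union, the genericity of each choice) are routine in comparison.
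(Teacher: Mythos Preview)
Your overall architecture---an increasing union $\Gamma=\bigcup_n\Gamma_n$ of free Schottky groups whose attracting fixed points become dense---is exactly the paper's. But the paper identifies, and you do not resolve, the central higher-rank obstruction: when you add a new generator $g_{n+1}$, its attracting and repelling flags $\xi^\pm$ must be in general position not merely with the finitely many previous fixed flags but with the \emph{entire} previous limit set $\Lambda_{\Gamma_n}$ (equivalently, the ping-pong domains must avoid $\mathcal E_{\xi^\pm}$). In rank one $\mathcal E_\xi$ is a point and this is trivial; in higher rank $\mathcal E_\xi$ is a positive-dimensional Schubert variety, and it is an open question (see the paper's footnote in the introduction) whether $\mathcal E(\Lambda_{\Gamma_n})=\bigcup_{x\in\Lambda_{\Gamma_n}}\mathcal E_x$ can have nonempty interior for a free Anosov subgroup. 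Your ``expand-and-conjugate'' mechanism does not address this: the walls $\mathcal E_{\xi^\pm_{g_j}}$ are $g_j$-invariant, so iterates of a ball near a wall need not escape the union of walls at all---points can slide along one Schubert variety into another rather than into a gap. You flag this as ``the delicate point'' but the proposed fix (``finitely many walls, subvarieties nowhere dense'') only shows gaps exist, not that any prescribed ball can be pushed into one by $\Gamma_n$.

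The paper circumvents this entirely by a different idea: it forces $\dim_H(\Lambda_{\Gamma_n})<1$ at every stage. Lemma~2.1 then gives $\dim_H\mathcal E(\Lambda_{\Gamma_n})<\dim\mathcal F$, so $\mathcal C(\Lambda_{\Gamma_n})$ has full measure and one can place $\xi^+_{n+1}$ directly in any prescribed ball (no conjugation trick needed). Maintaining $\dim_H<1$ requires bounding the critical exponents $\delta_\alpha(\Gamma_n)\le 1-2^{-n}$ via the estimate of Dey--Kim--Oh, and the real work of the paper is Proposition~3.3: showing that if one takes a sufficiently high power $\beta^{m_0}$ of the new loxodromic, then $\delta_\alpha(\langle\Gamma_{n-1},\beta^{m_0}\rangle)\le\delta_\alpha(\Gamma_{n-1})+2^{-n}$. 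This is done by a careful word-by-word Cartan projection estimate (Lemma~3.5) through Tits' highest-weight representations. Your proposal contains nothing analogous, and without some quantitative control on $\Lambda_{\Gamma_n}$ the general-position step does not go through in rank $\ge 2$. (The discreteness of the union is also handled differently: the paper uses the Margulis lemma together with the fact that virtually nilpotent subgroups of free groups are cyclic, rather than your axis-displacement estimate.)
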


As lattices in $ G $ are finitely generated, it follows that the group $ \Gamma $ described in \Cref{thm:main} cannot be a lattice.
Consequently, our main result exhibits existence of infinite co-volume discrete subgroups of $G$ with full limit sets in the Furstenberg boundary of $G$.

\begin{remark}
Our argument also shows that every lattice $\Delta\subset G $  contains an infinite index subgroup $\G$ with a full limit set in the Furstenberg boundary of $G$. See \Cref{mainthm_lattice}.    
\end{remark}

We remark that if $G$ is rank one, then the existence of infinite co-volume discrete subgroups with full limit sets also follow from the theory of hyperbolic groups. More precisely, if $ \Gamma \subset G $ is a (torsion-free) uniform lattice in $ G $ of rank one, then $ \Gamma $ is a nonelementary hyperbolic group. Consequently, by Ol'shanski\u{\i} \cite{Olshanskii} and Delzant \cite{Delzant}, $ \Gamma $ contains an infinite index nontrivial normal subgroup $\Delta$. Although such normal subgroups $\Delta$ are not lattices in $ G $, they share the same limit set as $ \Gamma $, providing examples of infinite co-volume discrete subgroups of $ G $ with full limit sets.
However, a similar argument is hopeless if $G$ is of higher rank due to Margulis' Normal Subgroup Theorem, and the existence of discrete subgroups of $G$ with a full limit set that are not lattices, which is the main content of \Cref{thm:main}, is most interesting when $G$ is of higher rank.

\subsection{Idea of the proof of Theorem \ref{thm:main}}

The idea of the proof of  \Cref{thm:main} is as follows: We construct inductively an infinite sequence of subgroups $\Gamma_1 \subset \Gamma_2 \subset \cdots$ of $G$ such that $\Gamma_{n+1}$ is obtained from $\Gamma_n$ by adding a new element $\beta _{n+1} \in G$ satisfying some conditions; see \S\ref{mainconstruction} for details. The final group \[ \Gamma = \bigcup_{n \in \N} \Gamma_n\]
will satisfy the conclusion of \Cref{thm:main}.

To carry out this inductive construction, we will need certain control on these subgroups: More precisely, at each step, we'll ensure that $ \Gamma_n $ is a free subgroup of rank $n$, which is also an Anosov subgroup of $G$. Additionally, we'll select the sequence $ (\beta_n) $ such that the sequence $ (x_n) $ of attractive fixed points in $ G/P $ forms a dense set. This will ensure that $ \Gamma $ has a full limit set.
In the case where $G$ has rank one, this could be achieved easily using the classical Maskit Combination Theorem. However, in the higher rank case, this argument requires special care, as one needs $x_{n+1}$ and $\Lambda_{\Gamma_{n}}$ to be in general position with respect to each other; even if $\Lambda_{\Gamma_{n}}$ is not full, this might not be possible a priori.\footnote{In this regard, a negative answer to the following question would have simplified some arguments: Can there be an Anosov subgroup $\Delta$, isomorphic to a free group, of a simple real Lie group $G$ of rank at least two, such that the set of points in $G/P$ not in general position with respect to $\Lambda_\Delta$ has a nonempty interior?}
    
In order to circumvent this issue, we will guarantee that for each $n\in\N$, the Hausdorff dimension of the limit $\Lambda_{\Gamma_n}$ is sufficiently small (always strictly bounded by one). This will ensure the existence of $x_{n+1}$ such that the final sequence $(x_n)$ is dense in $G/P$. In order to control the Hausdorff dimension of $\Lambda_{\Gamma_n}$, we need bounds of the Hausdorff dimension in terms of certain critical exponents that follow from 
\cite{Dey:2024aa}.  The most intricate part of our argument lies in ensuring that at each step, these critical exponents increase only by a small amount if $\beta_n$ is appropriately chosen; refer to \Cref{prop:crit} for a precise statement.

\subsection*{Acknowledgements} 
We would like to thank Sami Douba, Misha Kapovich, Bruno Martelli, Hee Oh, and Pierre Py for various remarks and interesting conversations related to this work. Special thanks to Or Landersberg and Giuseppe Martone for various discussions and computations related to the example discussed in Proposition \ref{prop:KLLR}.  We are grateful to our annonymous referee for their careful reading, corrections, and additional comments, which improved significatly this paper, especially for suggesting including Proposition \ref{samicriterion} and correcting the proof of \ref{sl3full:case1}. S.~D. completed this work at Yale University and extends his gratitude to the mathematics department for providing a wonderful working environment. S.~H. was supported by the Sloan Fellowship Foundation.

\section{Preliminaries}

The goal of this section is to introduce some definitions, set up our notation, and prove some preliminary results essential for our inductive construction provided in \S\ref{mainconstruction}.

\subsection{Basic notions}\label{sec:notation}
 Let $G$ be a non-compact connected real semi-simple Lie group with finite center, and let $\mathfrak{g}$ denote the Lie algebra of $G$. For a Cartan decomposition
$\mathfrak{g} = \mathfrak{k} \oplus \mathfrak{p}$, let $\mathfrak{a}$ be a maximal abelian subalgebra of $\mathfrak{p}$, and let $A = \exp \mathfrak{a}$ be a maximal real split torus.
The (real) rank of $G$ is the dimension of $\mathfrak{a}$.
(We refer to standard books such as Knapp \cite{Knapp-book} for more details.)

Let $\Sigma$ be the set of restricted roots of $(\mathfrak{g},\mathfrak{a})$, which comprises all nonzero elements $\beta\in\mathfrak{a}^*$ for which the associated weight space
$\mathfrak{g}^\beta = \{ X\in \mathfrak{g} :\ [H, X] = \beta(H)X, \, \forall H\in\mathfrak{a} \}$ is nonzero.
We choose a set of simple roots, denoted by $\Pi\subset \Sigma$, which is a basis for the dual space $\mathfrak{a}^*$ such that every $\beta\in\Sigma$ can be expressed as a positive or negative integer linear combination of elements of $\Pi$.
Let $\Sigma_+$ (resp. $\Sigma_-$) denote the collection of positive (resp. negative) roots; $\Sigma = \Sigma_+ \sqcup \Sigma_-$. The positive Weyl chamber $\{H\in \mathfrak{a} :\ \beta(H) \ge 0,\, \forall \beta\in\Sigma^+\}$ is denoted by $\mathfrak{a}^+$.
With these choices, the Cartan decomposition of $G$ is given by $G = K A^+ K$, where $A^+ = \exp \mathfrak{a}^+$ and $K$ is a maximal compact subgroup of $G$.
The Cartan projection $\mu: G \to \mathfrak{a}^+$ assigns to each $g \in G$ the unique element $\mu(g) \in \mathfrak{a}^+$ such that $g = k_1 \exp(\mu(g)) k_2$, where $k_1, k_2 \in K$.

The Weyl group $W$ is defined as $N_K(\mathfrak{a})/Z_K(\mathfrak{a})$, which can be naturally identified with the finite group generated by reflections in the roots. There exists a unique element of $W$ that maps $\mathfrak{a}^+$ to $-\mathfrak{a}^+$. We fix an element $w_0\in N_K(\mathfrak{a})$ of order two, representing the longest Weyl element.

The normalizer of the nilpotent subalgebra $\mathfrak{n} = \bigoplus_{\beta\in\Sigma_+} \mathfrak{g}^\beta$ in $G$ is the standard minimal parabolic subgroup, denoted by $P$. The standard opposite minimal parabolic subgroup $P^- = w_0 P w_0$ is the normalizer in $G$ of $\mathfrak{n}^- = \bigoplus_{\beta\in\Sigma_-} \mathfrak{g}^\beta$.
Similarly, each simple root $\alpha\in\Pi$ defines a standard (resp. opposite) maximal parabolic subgroup $P_\a$ (resp. $P_\a^-$) of $G$, which is the normalizer of the nilpotent subalgebra $ \bigoplus_{\beta\in \Sigma^+\smallsetminus \op{span}(\Pi\smallsetminus\{\a\})} \mathfrak{g}^\beta$ (resp. $\bigoplus_{\beta\in \Sigma^-\smallsetminus \op{span}(\Pi\smallsetminus\{\a\})} \mathfrak{g}^{\beta}$) in $G$.
Note that $P\subset P_\a$ and $P^-\subset P^{-}_\a$ for all $\a\in\Pi$.

\subsection{Schubert varieties}
The quotient space $\mathcal{F} = G/P$ is the Furstenberg boundary of $G$.
 Being a $G$-homogeneous space, the Furstenberg boundary $\F$ carries a unique structure of an analytic manifold;
 see  \cite[Ch. III, Sect. 1]{Bourbaki1}.
 Given $x = g[P] \in\F$, let
 \[
  \C_x = g P w_0 [P],
 \]
 where $w_0$ is the longest Weyl group element.
 This is a dense open set in $\F$.
 For the rest of this paper, we fix a Riemannian metric on $\F$ and let $d_\F: \F\times \F \to \R$ be the associated distance function.
 For convenience, we will assume that the diameter of $\F$ is $1$.

 The complement of $\C_x$ in $\F$, denoted by $\E_x$, 
 is an analytic subvariety\footnote{If $M$ is a (real) analytic manifold and $Z\subset M$ is a closed subset, then $Z$ is called an {\em analytic subvariety} of $M$ if for all $z\in Z$, there exists an open neighborhood $U$ of $z$ and finitely many analytic functions $f_1,\dots,f_k:U\to \R$ such that $Z\cap U = \{ x\in U:\ f_k(x) = 0, \forall i=1,\dots,k\}$.}
 of $\F$, which is the finite union of {\em Schubert cells} of co-dimension at least one:
 \[
  \E_x = \bigsqcup_{w\in W, w\ne w_0} g P \tilde w [P]
 \]
 In the above, $\tilde w$ is a (equivalently, any) representative of $w\in W$ in $N_K(\mathfrak{a})$.
 Therefore, 
 \begin{equation}\label{eqn:dimE}
      \Hdim \E_x =
  \Pdim \E_x \le \dim \F - 1,
 \end{equation}
 where $\Hdim$ and $\Pdim$ denote the Hausdorff dimension and upper packing dimension in $(\F,d_\F)$, respectively.
 
More generally, for a nonempty subset $S\subset \F$, let 
\[
 \E(S) \coloneqq \bigcup_{x\in S} \E_x
 \quad\text{and}\quad
 \C(S) \coloneqq \E(S)^c = \bigcap_{x\in S} \C_x.
\]
In what follows, it will be necessary to control the size of $\E(S)$. We provide an upper bound for the size of this set as follows:

\begin{lemma}\label{lem:cs1}
 For all nonempty Lebesgue measurable subsets $S\subset \F$, we have that  $\Hdim \E(S) \le \Hdim S + \Hdim \E_{x}$.
\end{lemma}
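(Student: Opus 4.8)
The plan is to cover the compact Furstenberg boundary $\F$ by finitely many open sets over which the projection $p\colon G\to G/P=\F$ admits a smooth section, and on each of them to exhibit $\E(\cdot)$ as a Lipschitz image of a product with the fixed compact variety $\E_o\coloneqq\E_{[P]}$.

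First I would record the equivariance $g\E_x=\E_{gx}$ for all $g\in G$: writing $x=g[P]$ one has $\C_x=gPw_0[P]$, hence $\C_{gx}=g\C_x$ and therefore $\E_{gx}=g\E_x$; in particular $\Hdim\E_x$ is independent of $x$, so the right-hand side of the Lemma is well defined. Since $p$ is a smooth submersion, each point of $\F$ lies in an open neighborhood $V$ carrying a smooth section $s\colon V\to G$, so that $s(x)[P]=x$ and hence $\E_x=\E_{s(x)[P]}=s(x)\E_o$ for $x\in V$. Thus, with $\psi\colon V\times\F\to\F$ the smooth map $\psi(x,z)=s(x)\cdot z$,
\[
  \E(S\cap V)=\bigcup_{x\in S\cap V}s(x)\E_o=\psi\big((S\cap V)\times\E_o\big).
\]
After shrinking $V$ to a relatively compact open set $V'$, the restriction of $\psi$ to the compact set $\overline{V'}\times\E_o$ is Lipschitz, so
\[
  \Hdim\E(S\cap V')\le\Hdim\big((S\cap V')\times\E_o\big)\le\Hdim(S\cap V')+\Pdim\E_o\le\Hdim S+\Hdim\E_x,
\]
where the second inequality is the standard product bound $\Hdim(A\times B)\le\Hdim A+\Pdim B$ and the third uses \eqref{eqn:dimE}, which gives $\Pdim\E_o=\Hdim\E_o=\Hdim\E_x$. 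Covering $\F$ by finitely many such $V'$ and using $\E(S)=\bigcup_{V'}\E(S\cap V')$ together with the countable stability of Hausdorff dimension then yields $\Hdim\E(S)\le\Hdim S+\Hdim\E_x$.

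I do not anticipate a genuine obstacle here; the only point worth flagging is that the product estimate has to be fed the \emph{packing} (equivalently, upper box) dimension of $\E_o$ rather than just its Hausdorff dimension — one cannot add Hausdorff dimensions across a product in general — which is exactly why \eqref{eqn:dimE} records $\Pdim\E_x$ on the same footing as $\Hdim\E_x$. Incidentally, Lebesgue-measurability of $S$ is not actually needed, since both the Lipschitz-image inequality and the product bound are valid for arbitrary subsets.
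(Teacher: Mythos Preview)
Your argument is correct and follows the same strategy as the paper: express $\E(S)$ locally as the image under a smooth (hence Lipschitz on compacts) map of a product $S'\times\E_{x_0}$, then invoke the product inequality $\Hdim(A\times B)\le\Hdim A+\Pdim B$ together with \eqref{eqn:dimE}. The only cosmetic difference is in how the local parametrization is chosen: the paper parametrizes the big cell $\C_{y_0}$ by the unipotent group $N_0$ via the orbit map $h\mapsto hx_0$ (so that $S$ is transported to $Z\subset N_0$ and the action map $N_0\times\F\to\F$ plays the role of your $\psi$), whereas you use local smooth sections of the submersion $G\to G/P$. Both devices serve exactly the same purpose and the remaining steps coincide, including the observation that it is the packing dimension of $\E_{x_0}$, not its Hausdorff dimension, that must appear in the product bound. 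Your remark that Lebesgue measurability of $S$ is not actually used is also apt.
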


\begin{proof}
    Since $\F$ is finitely covered by open subsets of the form $\C_y$, $y\in \F$,  it will be enough to assume that there exists a point $y_0\in \F$ such that $S\subset \C_{y_0}$.
    Fixing a base-point $x_0\in \C_{y_0}$, we can parametrize $\C_{y_0}$ by $\varphi : N_0 \to \C_{y_0}$,
    $\varphi(h) = hx_0$, where $N_0$ is the maximal unipotent subgroup  (i.e., a conjugate of $N = \exp \mathfrak{n}$) of $G$ stabilizing $y_0$.
    The map $\varphi$ is a diffeomorphism.
    Let us fix a Riemannian metric on $N_0$.
    As $\varphi$ is a diffeomorphism, for $Z \coloneqq \varphi^{-1} (S) \subset N_0$,
    we have $\Hdim Z = \Hdim S$.

    Let us consider the smooth map
    $
     \psi : N_0 \times \F \to \F, 
    $
    $
     \psi (h,x) = hx.
    $
    Since $\psi$ is smooth, it is locally Lipschitz. Since Hausdorff dimension is nonincreasing under Lipschitz maps,
    we have $\Hdim\psi (Z\times \E_{x_0}) \le \Hdim (Z\times \E_{x_0})$. 
    Moreover, by Tricot \cite[Sect. 5]{Tricot} (see also \cite[Thm. 8.10]{Mattila}), $\Hdim (Z \times \E_{x_0})$ is bounded above by $\Hdim Z + \Pdim \E_{x_0}$.
    Thus, by \eqref{eqn:dimE}, we have
    \begin{equation}\label{eqn:lem:cs1}
        \Hdim\psi (Z\times \E_{x_0})\le \Hdim Z + \Hdim \E_{x_0}.
    \end{equation}
    
    Finally, observe that
    $\psi (Z\times \E_{x_0}) = \bigcup_{h\in Z} h\E_{x_0} = \bigcup_{h\in Z} \E_{h x_0} = \E(S)$.
    So, by \eqref{eqn:lem:cs1}, we get $\Hdim \E(S) \le \Hdim S + \Hdim \E_{x_0}$.
\end{proof}

The following result is a direct consequence of \eqref{eqn:dimE} and \Cref{lem:cs1}:

\begin{corollary}\label{lem:cs}
 Suppose that $S$ is a nonempty measurable subset in $\F$.
 If $\Hdim (S) <1$,
 then $\Hdim \E(S) < \dim \F$. 
 Consequently, in this case, $\op{Leb} \E(S) = 0$ and $\C(S)$ is full measure subset $\F$.
\end{corollary}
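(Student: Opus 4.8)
The plan is simply to concatenate the single-variety dimension bound \eqref{eqn:dimE} with the subadditivity-type estimate of \Cref{lem:cs1}. First I would note that \Cref{lem:cs1} gives $\Hdim \E(S) \le \Hdim S + \Hdim \E_{x}$ for any base point $x\in\F$, while \eqref{eqn:dimE} bounds the last term by $\dim\F - 1$. Since by hypothesis $\Hdim S < 1$, adding these yields
\[
 \Hdim \E(S) \;\le\; \Hdim S + \dim\F - 1 \;<\; \dim\F,
\]
which is the first assertion.

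Next, to pass from the Hausdorff-dimension bound to the measure statement, I would invoke the elementary fact that on a Riemannian manifold of dimension $d = \dim\F$ the Riemannian volume is, in local charts, comparable to $d$-dimensional Hausdorff measure; hence any subset of Hausdorff dimension strictly below $d$ has zero $d$-dimensional Hausdorff measure and is therefore Lebesgue-null. Applying this to $\E(S)$ gives $\op{Leb}\E(S) = 0$; this also certifies that $\E(S)$ is measurable (it has outer measure zero), even though a priori it is an uncountable union of Schubert varieties. Finally, $\C(S) = \E(S)^c$ by definition, so $\C(S)$ is co-null, i.e.\ a full-measure subset of $\F$.

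I do not expect any genuine obstacle here: the content has already been packaged into \Cref{lem:cs1} and \eqref{eqn:dimE}, and the corollary is merely the arithmetic combination "$<1$ plus $\le \dim\F-1$ is $<\dim\F$" together with the standard dimension-zero-measure principle. The only points requiring a modicum of care are the measurability remark above and the fact that one should not expect any improvement on the $\dim\F-1$ bound for $\Hdim\E_x$; neither causes difficulty. (If one wished to avoid citing \Cref{lem:cs1}, one could instead cover $\F$ by finitely many open charts $\C_y$, trivialize over each via the unipotent action as in the proof of \Cref{lem:cs1}, and run the Lipschitz-image-of-a-product argument directly, but invoking \Cref{lem:cs1} is cleaner.)
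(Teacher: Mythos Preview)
Your proposal is correct and matches the paper's approach exactly: the paper states the corollary as ``a direct consequence of \eqref{eqn:dimE} and \Cref{lem:cs1}'' without further proof, and you have simply spelled out that arithmetic together with the standard fact that sets of sub-maximal Hausdorff dimension are Lebesgue-null.
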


\subsection{Anosov subgroups}
The critical exponent is an important numerical invariant associated with discrete groups.
Given a discrete subgroup $\G$ of $G$, the {\em critical exponent} of $\G$ with respect to a simple root $\a\in\Pi$ is defined by
\begin{equation}\label{eqn:crit}
 \delta_\a(\G) \coloneqq \limsup_{R\to\infty} \frac{\log \# \{ \g\in\G :\ \a(\mu(\g)) \le R\}}{R}.
\end{equation}
While $\delta_\a(\G)$ takes values in the interval $[0,\infty]$, for the special class of discrete subgroups of $G$ defined below, this value is finite:

\begin{definition}\label{def:Anosov}
    A finitely generated subgroup $\G$ of $G$ is called {\em Anosov} if there exist constants $L\ge 1$ and $A\ge 0$ such that for all $\a\in\Pi$ and $\g\in\G$, 
    \[
     \a(\mu(\g)) \ge \frac{1}{L} |\g| - A,
    \]
    where $|\cdot|$ denotes some word metric on $\G$.
\end{definition}

We refer the reader to \cite{KLP_survey,WienhardICM} for surveys on the topic of Anosov subgroups.

If $\G$ is an infinite Anosov subgroup of $G$, then there exists a certain nonempty closed $\G$-invariant subset of $\F$, called the {\em limit set} of $\G$, denoted by $\Lambda_{\G}$.
Moreover, if $\G$ is non-elementary (equivalently, not virtually cyclic), then $\G$ acts minimally on $\Lambda_{\G}$.
Furthermore, any two distinct points $x^\pm\in\Lambda_\G$ are in {\em general position} (or {\em antipodal}), meaning that $x^+\in \C_{x^-}$ or, equivalently, $x^-\in \C_{x^+}$.

In the sequel, we will need an upper bound on the Hausdorff dimension of the limit set in terms of the critical exponents \eqref{eqn:crit}:

\begin{theorem}[Dey-Kim-Oh, {\cite[Thm. 1.7]{Dey:2024aa}}]\label{eqn:hdimbound}
    If $\G$ is an Anosov subgroup of $G$, then
\begin{equation*}
 \Hdim (\Lambda_{\G}) \LE \max_{\a\in\Pi}\delta_{\a}.
\end{equation*}
\end{theorem}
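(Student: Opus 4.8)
The plan is to establish the bound by a Sullivan--Bishop--Jones-type covering argument on $\F$, in which the limit set is covered by shadows of $\G$-orbit balls whose sizes are controlled by the Cartan projection. Fix a basepoint $o\in X$ and, for $\g\in\G$, write $\g=k_1(\g)\exp(\mu(\g))k_2(\g)$ in the Cartan decomposition $G=KA^+K$. The first ingredient is a quantitative shadow lemma: fixing a large $\rho_0>0$ and letting $O_\g\subset\F$ denote the shadow, seen from $o$, of the orbit ball $B(\g o,\rho_0)\subset X$, there is a constant $C=C(\G,\rho_0)\ge 1$ such that for all $\g\in\G$
\[
 C^{-1}e^{-r(\g)} \LE \op{diam} O_\g \LE C\,e^{-r(\g)},
 \qquad r(\g)\coloneqq \min_{\beta\in\Sigma^+}\beta(\mu(\g)) = \min_{\a\in\Pi}\a(\mu(\g)),
\]
and $O_\g$ contains a neighbourhood of $k_1(\g)[P]$. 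The identity $\min_{\Sigma^+}=\min_{\Pi}$ holds because every positive root is a nonnegative integral combination of simple roots while each simple root is nonnegative on $\mathfrak{a}^+$; geometrically, $e^{-r(\g)}$ is the slowest rate at which $\g$ contracts $\F$ towards the flag $k_1(\g)[P]$, the differential there having eigenvalues $e^{-\beta(\mu(\g))}$, $\beta\in\Sigma^+$. The Anosov hypothesis enters twice: it forces $\mu(\g)$ to remain uniformly regular, which is what makes the comparison between $d_\F$ and these contraction rates uniform in $\g$; and it ensures that every point of $\Lambda_\G$ is a conical limit point, so that each $\xi\in\Lambda_\G$ lies in $O_\g$ for a sequence of $\g\in\G$ with $r(\g)\to\infty$. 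In particular, for every $\varepsilon>0$ the family $\{O_\g:\ \op{diam} O_\g<\varepsilon\}$ covers $\Lambda_\G$ by sets of diameter $<\varepsilon$.

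Next comes the counting. For a threshold $R>0$,
\[
 \{\g\in\G:\ r(\g)\le R\} \;=\; \bigcup_{\a\in\Pi}\{\g\in\G:\ \a(\mu(\g))\le R\},
\]
so, writing $D\coloneqq\max_{\a\in\Pi}\delta_\a$, the definition \eqref{eqn:crit} of the critical exponents gives: for every $\varepsilon>0$ there is $R_0$ with $\#\{\g\in\G:\ r(\g)\le R\}\le |\Pi|\,e^{(D+\varepsilon)R}$ for all $R\ge R_0$. Now fix $s>D$ and choose $\varepsilon>0$ with $D+\varepsilon<s$. Group the elements with $r(\g)$ large into dyadic shells $\mathcal{S}_n\coloneqq\{\g\in\G:\ e^{-r(\g)}\in(2^{-n-1},2^{-n}]\}$; since each $\g$ contributes a single shadow $O_\g$ of diameter $\le C\,2^{-n}$, for every large $N$ the sets $\{O_\g:\ \g\in\mathcal S_n,\ n\ge N\}$ form a cover of $\Lambda_\G$ of mesh $\lesssim 2^{-N}$ with
\[
 \sum_{n\ge N}\ \sum_{\g\in\mathcal{S}_n}\big(\op{diam} O_\g\big)^s
 \ \le\ C^s\sum_{n\ge N} \#\mathcal{S}_n\,2^{-ns}
 \ \le\ C^s|\Pi|\sum_{n\ge N} 2^{(D+\varepsilon)(n+1)}\,2^{-ns},
\]
and the right-hand side is a convergent geometric series in $n$ with ratio $2^{D+\varepsilon-s}<1$, hence tends to $0$ as $N\to\infty$. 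Therefore $\mathcal{H}^s(\Lambda_\G)=0$ for every $s>D$, giving $\Hdim\Lambda_\G\le D=\max_{\a\in\Pi}\delta_\a$.

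I expect the one genuinely substantive point to be the quantitative shadow lemma in the displayed form: the uniform (over $\g\in\G$) two-sided comparison $\op{diam} O_\g\asymp e^{-r(\g)}$, and the fact that these shadows, once small, actually cover $\Lambda_\G$. Both are standard in the theory of Anosov (Borel--Anosov) subgroups and follow from uniform regularity of $\mu(\g)$ together with the root-space description of $T\F$, but establishing them with constants independent of $\g$ is the part requiring care; once they are in hand the remainder is routine covering bookkeeping. (A pleasant simplification when aiming only at an upper bound is that no disjointness or bounded-overlap property of the shadows is needed — overcounting group elements within a dyadic shell is harmless, as it only inflates an upper estimate for $\mathcal H^s$; for this reason only the inequality $\op{diam}O_\g\le Ce^{-r(\g)}$ is actually used above.)
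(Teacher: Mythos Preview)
The paper does not prove this statement; it is quoted as \cite[Thm.~1.7]{Dey:2024aa} and used as a black box in the inductive construction. There is therefore no proof in the present paper to compare your proposal against.

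That said, your Sullivan--Bishop--Jones covering outline is a coherent route to the upper bound. The genuine content, as you correctly flag, is the uniform shadow estimate $\op{diam} O_\g \le C\,e^{-\min_{\a\in\Pi}\a(\mu(\g))}$ together with conicality of limit points ensuring that the small shadows cover $\Lambda_\G$; both are available for Borel--Anosov subgroups, and once granted, the dyadic-shell counting you give is routine and correct. Whether this matches the actual argument in \cite{Dey:2024aa} (which establishes Ahlfors regularity of Patterson--Sullivan measures, a stronger statement) cannot be determined from the paper at hand.
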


\subsubsection{Contraction dynamics}

\begin{lemma}\label{lem:contraction}
    Let $\G$ be an Anosov subgroup of $G$ and let $B\subset \C(\Lambda_\G)$ be a nonempty compact subset.
    Then, for all sequences $(\g_i)$ in $\G$ of pairwise distinct elements, 
    \[
     \lim_{i\to\infty}\op{diam} \g_i B = 0
     \quad\text{and}\quad
     \lim_{i\to\infty} d_\F(\g_i B, \Lambda_\G) = 0.
    \]
    In particular, for all $y\in \C(\Lambda_\G)$, the closure of the $\G$-orbit of $y$ in $\F$ is $(\G\cdot y) \sqcup \Lambda_{\G}$.
\end{lemma}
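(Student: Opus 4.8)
The plan is to prove the two limit statements first and then deduce the orbit-closure description. For the diameter statement, fix a word metric on $\G$ and note that since $\G$ is Anosov and the $\g_i$ are pairwise distinct, $|\g_i|\to\infty$, hence $\a(\mu(\g_i))\to\infty$ for every $\a\in\Pi$. The action of $G$ on $\F=G/P$ has the standard contraction dynamics: writing $\g_i = k_i \exp(\mu(\g_i)) \ell_i$ in the Cartan decomposition, after passing to a subsequence we may assume $k_i\to k$, $\ell_i\to \ell$ in $K$, so that $\g_i^+ := k_i[P]\to k[P] =: x^+$, and $\g_i^{-} := \ell_i^{-1} w_0[P] \to \ell^{-1}w_0[P] =: x^-$ (the attracting and repelling flags of the sequence). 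The key input is the uniform contraction lemma: for any compact subset $Q$ of $\C_{x^-}$, one has $\op{diam}\g_i Q \to 0$ and $\g_i Q \to x^+$; this is proved by observing that $\exp(\mu(\g_i))$ expands the Schubert cell $\C_{w_0[P]}$ towards $[P]$ at a rate governed by $\min_{\a\in\Pi}\a(\mu(\g_i))$, and conjugating by the convergent sequences $\ell_i, k_i$. So I would either cite this directly or include a short proof along these lines.

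Now I must connect this with the hypothesis $B\subset \C(\Lambda_\G)$. Pass to a subsequence so that $x^\pm$ as above exist; the point is that $x^-\in\Lambda_\G$. Indeed, $\g_i^-$ is, up to the $w_0$-twist, the sequence of attracting flags of $\g_i^{-1}$, and by the standard characterization of the limit set of an Anosov group as the accumulation set in $\F$ of $\{\g^+ : \g\in\G\}$ (equivalently, of $\mu$-regular sequences), both $x^+$ and $x^-$ lie in $\Lambda_\G$. Since $B\subset \C(\Lambda_\G) = \bigcap_{z\in\Lambda_\G}\C_z \subset \C_{x^-}$, and $B$ is compact, the contraction lemma applies with $Q=B$: along this subsequence $\op{diam}\g_i B\to 0$ and $d_\F(\g_i B, x^+)\to 0$, and since $x^+\in\Lambda_\G$ also $d_\F(\g_i B,\Lambda_\G)\to 0$. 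Because every subsequence of the original sequence has a further subsequence with this property, the full sequences converge: $\op{diam}\g_i B\to 0$ and $d_\F(\g_i B,\Lambda_\G)\to 0$.

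For the last assertion, take $y\in\C(\Lambda_\G)$ and let $z$ be an accumulation point of $\G\cdot y$, say $\g_i y \to z$ with $\g_i\in\G$. If infinitely many $\g_i$ coincide then $z\in\G\cdot y$. Otherwise we may assume the $\g_i$ are pairwise distinct, apply the first part with $B=\{y\}$, and conclude $d_\F(\g_i y,\Lambda_\G)\to 0$, hence $z\in\Lambda_\G$. This shows $\overline{\G\cdot y}\subset (\G\cdot y)\cup\Lambda_\G$. Conversely $\G\cdot y\subset\overline{\G\cdot y}$ trivially, and $\Lambda_\G\subset\overline{\G\cdot y}$: pick any $x^+\in\Lambda_\G$ realized as the attracting flag of a sequence $(\g_i)$ in $\G$ with repelling flags staying at definite distance from $y$ (possible since $\G$ is non-elementary, so one can choose $\g_i$ with $\g_i^-$ converging to a point of $\Lambda_\G\setminus\{$anything forcing $y\notin\C_{\g_i^-}\}$, and $y\in\C(\Lambda_\G)$ already guarantees $y\in\C_{\g_i^-}$ for all $i$); then $\g_i y\to x^+$. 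Finally the union is disjoint because $\G\cdot y\subset\C(\Lambda_\G)$ (the set $\C(\Lambda_\G)$ is $\G$-invariant) while $\C(\Lambda_\G)\cap\Lambda_\G=\varnothing$.

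The main obstacle I anticipate is the bookkeeping in the contraction lemma and, in particular, making sure the limiting repelling flag $x^-$ genuinely lands in $\Lambda_\G$ rather than merely in $\F$ — this is where Anosovness (not just discreteness) is essential, and where one invokes the description of $\Lambda_\G$ via attracting flags of the group together with the fact that $\Lambda_\G$ is $\G$-invariant and closed. Everything else is a subsequence argument and the elementary observation that $\C(\Lambda_\G)$ is $\G$-invariant and disjoint from $\Lambda_\G$.
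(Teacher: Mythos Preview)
Your proposal is correct and follows essentially the same approach as the paper: both extract subsequences to obtain limiting attracting/repelling flags $x^\pm\in\Lambda_\G$ via regularity of Anosov subgroups, then use that $B\subset\C(\Lambda_\G)\subset\C_{x^-}$ together with the contraction dynamics on $\C_{x^-}$ to conclude. The paper packages the contraction step by citing \cite{KLP_char} and \cite[Lem.~3.8]{dey2023kleinmaskit} and runs the second limit as a proof by contradiction, whereas you unpack the Cartan-decomposition argument and argue directly via subsequences; you also spell out the ``in particular'' orbit-closure statement (including the reverse inclusion $\Lambda_\G\subset\overline{\G\cdot y}$ and disjointness), which the paper leaves implicit --- your parenthetical about non-elementariness there is unnecessary, since $y\in\C(\Lambda_\G)$ already guarantees $y\in\C_{x^-}$ for any limiting repelling flag.
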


\begin{proof}
    We use the fact that Anosov subgroups are {\em regular} in the sense of \cite[Sect. 4]{KLP_char}.
    Since $B\subset \C(\Lambda_{\G})$, together with \cite[Lem. 3.8]{dey2023kleinmaskit}, the regularity of the sequence $(\g_i)$ in $G$ implies $\lim_{i\to\infty}\op{diam} \g_i B = 0$.

    Now we prove $\lim_{i\to\infty} d_\F(\g_i B, \Lambda_{\G}) = 0$. Suppose, to the contrary, that there exists a sequence $(\g_i)$ in $\G$ of pairwise distinct elements such that $d_\F(\g_i B, \Lambda_{\G}) \ge \epsilon>0$ for all $i\in\N$. Since $\F$ is compact and $\lim_{i\to\infty}\op{diam} \g_i B = 0$, after extraction, there exists $y\in\F$ such that 
    \begin{equation}\label{eqn:lem:contraction}
     y\not\in\Lambda_{\G} \quad\text{and}\quad \g_i B \to \{y\} \quad\text{as }i\to\infty.
    \end{equation}
    Moreover, since $(\g_i)$ is a regular sequence in $G$, after further extraction, there exists $x^\pm\in\Lambda_{\G}$ such that the sequence of maps
    \[
     \g_i\vert_{\C_{x^-}} : \C_{x^-} \to \F
    \]
    converges uniformly on compacts as $i\to\infty$ to the constant map $f : \C_{x^-} \to \F$, $f\equiv x^+$; see \cite[Sect. 4]{KLP_char}.
    As $B\subset \C_{x^-}$, we have $\g_i B \to \{x^+\} \subset \Lambda_{\G}$, contradicting \eqref{eqn:lem:contraction}.
\end{proof}

Let $G = KA^+K$ be a Cartan decomposition of $G$.
An element $a\in A^+$ is \textit{regular} if $a = \exp(X)$ for some $X\in \mathfrak{a}^+$ satisfying $\a(X)\ne 0$ for all $\a\in\Sigma$.
An element of $A^+$ is {\em singular} if it is not regular.

\begin{lemma}\label{lem:k}
    Let $\G$ be an Anosov subgroup of $G$. For each $\g\in\G$, choose a decomposition $\g = k_1(\g) a(\g) k_2(\g)$ such that $k_i(\g)\in K$ and $a(\g)\in A^+$.
   Then, for all nonempty compact sets $B \subset \C(\Lambda_{\G})$,
   \[
     \liminf\{ d_\F (k_2(\g) B,\, \E_{x^-}):\ \g\in\G\} \Hquad > \Hquad 0,
   \]
   where $x^-$ is the repulsive fixed point in $\F$ of a (equivalently, any) regular element $a\in A^+$.
\end{lemma}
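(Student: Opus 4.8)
I want to show that the sets $k_2(\g)B$ stay uniformly away from the Schubert variety $\E_{x^-}$ as $\g$ ranges over all of $\G$. The key point is that $B \subset \C(\Lambda_\G)$ is compact, and I expect to reduce everything to the case of a \emph{fixed} point $x^-$ by exploiting the $K$-action and regularity/Cartan coordinates. First I would recall that for an Anosov subgroup, the sequence of Cartan projections $\mu(\g)$ is uniformly regular: there is $\epsilon_0>0$ with $\a(\mu(\g)) \ge \epsilon_0 |\g| - A$ for all $\a\in\Pi$ (this is the Anosov condition), so in particular $\mu(\g)$ drifts into the interior of $\liea^+$ at a linear rate, with the only possible accumulation in the ``flat'' directions controlled. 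Consequently, for any divergent sequence $(\g_i)$ in $\G$, after passing to a subsequence we may assume $k_2(\g_i)^{-1} \to k_\infty$ in $K$ and $\a(\mu(\g_i))\to\infty$ for all $\a\in\Pi$.

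**Main argument.** The proof is by contradiction: suppose there is a sequence of pairwise distinct $\g_i\in\G$ and points $b_i \in B$ with $d_\F\big(k_2(\g_i)b_i, \E_{x^-}\big) \to 0$. Passing to a subsequence, $b_i \to b\in B$ (compactness), and $k_2(\g_i)^{-1}\to k_\infty\in K$. Then $k_2(\g_i)b_i \to k_\infty^{-1} b$... wait, more carefully: I want to understand the limit of $k_2(\g_i)b_i$. Since $k_2(\g_i)$ need not converge, I instead argue at the level of the \emph{repulsive fixed point}. The point $x^-$ is the repulsive fixed point of a regular element $a = \exp(X_0)$, $X_0\in\op{int}\liea^+$, so $\E_{x^-} = \E_{[P^-]}$ up to the relevant convention and $\C_{x^-}$ is the big cell at $[P^-]$. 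Because $a(\g)\in A^+$ with $\mu(\g)=\log a(\g)$ uniformly regular, the element $a(\g_i)^{-1}$ contracts $\F$ toward $[P^-]$ on the big cell $\C_{[P^-]}^{\,c}$... I need to be careful about which cell. The clean statement: $a(\g_i)$ contracts $\C_{x^-}$ toward the attractive fixed point $[P]$, uniformly on compacts, as $i\to\infty$. Equivalently, writing $\g_i = k_1(\g_i)a(\g_i)k_2(\g_i)$, the point $k_2(\g_i)b_i$ is pushed by $a(\g_i)$ toward a neighborhood of $[P]$ \emph{provided} $k_2(\g_i)b_i$ stays in a compact subset of $\C_{x^-}$. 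So if $k_2(\g_i)b_i \to \E_{x^-}$, this is exactly the obstruction to the contraction dynamics already established in \Cref{lem:contraction}: there we used $\lim \op{diam}\g_i B = 0$ and $\lim d_\F(\g_i B, \Lambda_\G)=0$. I would invoke \Cref{lem:contraction}: $\g_i b_i = k_1(\g_i) a(\g_i) k_2(\g_i) b_i$ converges to $\Lambda_\G$. If, toward a contradiction, $k_2(\g_i) b_i$ accumulates on $\E_{x^-}$, then after extraction $k_2(\g_i)b_i \to z \in \E_{x^-}$; since $a(\g_i)$ is uniformly regular and fixes $[P],[P^-]$, the image $a(\g_i)(k_2(\g_i)b_i)$ cannot converge into the big cell $\C_{[P]}$ (points on $\E_{[P^-]}$ are ``repelled'' / have a nontrivial component that blows up), so after applying $k_1(\g_i)\to$ (subsequential limit) $k'\in K$, the limit of $\g_i b_i$ lands in $k'\E_{[P]}$, and one checks this forces a point of $\F$ that is \emph{not} in general position with $\Lambda_\G$ — but by the Anosov antipodality every limit of $\g_i b_i$ lies in $\Lambda_\G$, and $\Lambda_\G$ consists of pairwise antipodal points, giving the contradiction, much as in the last paragraph of the proof of \Cref{lem:contraction}.

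**Where the work is.** The honest extraction is this. Pass to a subsequence so that $k_1(\g_i)\to k'$, $k_2(\g_i)\to k''$ in $K$, and $\mu(\g_i)\to\infty$ uniformly regularly. Then the attractive fixed point of $\g_i$ converges to $x^+ := k'[P]\in\Lambda_\G$ and the repulsive one to $(k'')^{-1}[P^-]$, whose associated big cell is $(k'')^{-1}\C_{[P^-]}$; by regularity this latter point lies in $\Lambda_\G$ too (it is a limit of repulsive fixed points of $\g_i^{-1}$), hence by antipodality $x^+ \in (k'')^{-1}\C_{[P^-]}$, i.e. $k''x^+ \in \C_{[P^-]} = \C_{x^-}$. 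Now $B\subset \C(\Lambda_\G) \subset \C_{x^+}$... no: $B\subset \C(\Lambda_\G)$ means $B$ avoids $\E_y$ for every $y\in\Lambda_\G$, in particular $B \subset \C_{(k'')^{-1}[P^-]}$, equivalently $k'' B \subset \C_{[P^-]} = \C_{x^-}$ — but only in the limit. For fixed $i$, $k_2(\g_i)B$ need not be in $\C_{x^-}$; this is precisely the quantity we are trying to control. The resolution: since $k_2(\g_i)\to k''$ and $k''B$ is a compact subset of the open set $\C_{x^-}$, for all large $i$ the set $k_2(\g_i)B$ lies in a fixed compact $B'\Subset \C_{x^-}$, so $d_\F(k_2(\g_i)B, \E_{x^-}) \ge d_\F(B',\E_{x^-}) > 0$; and the finitely many remaining $\g$'s contribute a positive $\liminf$ automatically (each $k_2(\g)B$ is a fixed compact set, but it could meet $\E_{x^-}$!). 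So the genuine subtlety — and the step I expect to be the main obstacle — is ruling out that $k''B$ touches $\E_{x^-}$ for the \emph{subsequential} limits $k''$, i.e. showing that for every accumulation point $k''$ of $\{k_2(\g):\g\in\G\}$ we have $k''B \subset \C_{x^-}$. This is exactly the claim that every such $k''^{-1}[P^-]$ lies in $\Lambda_\G$ (so antipodality applies), which follows because $k''^{-1}[P^-]$ is the limit of the repulsive fixed points of $\g_i$, and the repulsive fixed point of an Anosov element of $\G$ lies in $\Lambda_{\G}$, with $\Lambda_\G$ closed; a compactness argument over $K$ then upgrades the pointwise statement to the uniform $\liminf > 0$ bound. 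I would present this as: assume $\liminf = 0$, extract a sequence realizing it, run the above to land in $\C_{x^-}$ in the limit, and conclude by openness of $\C_{x^-}$ and compactness of $B$.
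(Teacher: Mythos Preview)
Your argument in the final paragraph is correct and is genuinely different from the paper's route. You extract a subsequential limit $k'' = \lim k_2(\g_i)$ in $K$, identify $(k'')^{-1}x^-$ as the flag-limit of the regular sequence $(\g_i^{-1})$ (since $k_2(\g_i)^{-1}x^- = k_2(\g_i)^{-1}w_0[P] = k_1(\g_i^{-1})[P]$ once $a(\g_i)$ is regular), conclude $(k'')^{-1}x^- \in \Lambda_\G$, and then use $B \subset \C(\Lambda_\G) \subset \C_{(k'')^{-1}x^-}$ to get $k''B \subset \C_{x^-}$; openness of $\C_{x^-}$ and compactness of $B$ finish the contradiction. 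One phrasing issue: you should not say ``repulsive fixed point of $\g_i$'' since individual elements of an Anosov subgroup need not be loxodromic; the correct object is the point $k_2(\g_i)^{-1}x^-$, which is well-defined (up to the $M$-ambiguity, which fixes $x^-$) once $a(\g_i)$ is regular, and whose accumulation set lies in $\Lambda_\G$ by the standard flag-convergence characterization of the limit set.

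The paper argues differently: it enlarges $B$ to a compact neighborhood $B' \subset \C(\Lambda_\G)$, uses that $K$ acts uniformly bi-Lipschitz to force $k_2(\g_i)B' \cap \E_{x^-} \ne \emptyset$, then exploits the $A$-invariance $a\E_{x^-} = \E_{x^-}$ to propagate this intersection under $a(\g_i)$, and finally combines the diameter-shrinking from \Cref{lem:contraction} with the contraction of $a(\g_i)$ on a compact $B'' \subset \C_{x^-}$ to force a single point $z$ to lie simultaneously in $\E_{x^-}$ and equal $x^+ \notin \E_{x^-}$. Your approach is shorter and more conceptual, resting on the identification of $\Lambda_\G$ with the flag-accumulation set; the paper's approach stays entirely within the contraction dynamics of \Cref{lem:contraction} and the $A$-invariance of Schubert varieties, avoiding any appeal to how $\Lambda_\G$ is characterized.
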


\begin{proof}
    Let $B \subset \C(\Lambda_{\G})$ be any nonempty compact subset.
Suppose, contrary to our claim, that there exists a sequence $\g_i \in\G$ of pairwise distinct elements such that
\begin{equation}\label{eqn:lem:k}
	d_\F (k_2(\g_i) B,\, \E_{x^-}) \to 0 
	\quad \text{as } i \to \infty.
\end{equation}
For small enough $\epsilon>0$, the closed $\epsilon$-neighborhood $B'$ of $B$ lies in $\C(\Lambda_{\G})$. 
Since the action of $K$ on $(\F,d_\F)$ is uniformly bi-Lipschitz, \eqref{eqn:lem:k} implies that $k_2(\g_i) B' \cap \E_{x^-} \ne \emptyset$ for all large enough $i \in\N$.

Moreover, since $a\E_{x^-} = \E_{ax^-} = \E_{x^-}$ for all $a\in A$, we deduce that 
$
(a(\g_i)k_2(\g_i) B') \cap \E_{x^-} = a(\g_i)(k_2(\g_i) B'\cap \E_{x^-}) \ne \emptyset
$
for all large enough $i\in\N$. 
Since $\E_{x^-}$ is compact, after extraction, we obtain 
\begin{equation}\label{eqn2:lem:k}
d_\F(a(\g_i)k_2(\g_i) B',\, z) \to 0 \quad \text{as } i \to \infty
\end{equation}
for some $z\in\E_{x^-}$.
However, since $B'$ is a compact subset of $\C(\Lambda_{\G})$, by \Cref{lem:contraction}, $\operatorname{diam} \g_i B' \to 0$ as $i \to \infty$.
Therefore, we have that $\operatorname{diam} (a(\g_i) k_2(\g_i) B') \to 0$. After further extraction, there exists a compact set $B''\subset \C_{x^-}$ with a nonempty interior such that $k_2(\g_i) B \supset B''$ for all large enough $i$. Consequently, $\operatorname{diam} a(\g_i) B'' \to 0$ as $i \to \infty$. Therefore,
by \eqref{eqn2:lem:k}, 
$a(\g_i) B'' \to \{z\}$
as $i \to \infty$.

Let $x^+$ denote the attractive fixed point in $\F$ of a (equivalently, any) regular element $a\in A^+$.
 Since  $B''\cap \E_{x^-} = \emptyset$, we also obtain that $a(\g_i) B'' \to x^+$ as $i \to \infty.$
 So, by the preceding paragraph, $z=x^+$.
 However, $x^+$ and $x^-$ are in general position, i.e., $x^+\not\in \E_{x^-}$. This is a contradiction with the fact that $z\in\E_{x^-}$.
\end{proof}

\subsection{Certain bad sets are null}\label{sec:bad_g_points}

For $g\in G$, let
 $
 \B_g = \{ (x,y)\in \F^2 :\ 
  g x \in\E_x \cup \E_y \text{ or }
  g y \in\E_x \cup \E_y\}
 $
and for $\G\subset G$, let 
\begin{equation}\label{def:badsetforGamma}
      \B(\G) \coloneqq \bigcup_{\g\in\G,\, \g\ne 1_G}\B_\g
\end{equation}

At the $n$-th step of the inductive construction that we discuss in \S\ref{mainconstruction}, we need to find a two points in the Furstenberg boundary $\F$ that are in general position relative to each other and to the limit set of the group $\G_{n-1}$ constructed in the $(n-1)$-th step. It turns out that to carry out the construction of $\G_{n}$, it will also be necessary to pick the pair outside of $\B(\G_{n-1})$.
Thus, we need to show that $\B(\G_{n-1})$ is negligible. More precisely, we will prove the following:

\begin{proposition}\label{prop:badsetnull}
 If $\G$ is a  non-elementary torsion-free Anosov subgroup of $G$,
 then $\B(\G)$ has a zero Lebesgue measure in $\F^2$.
\end{proposition}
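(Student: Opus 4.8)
The plan is to fix $\g \in \G$, $\g \neq 1_G$, and show that $\B_\g$ is a Lebesgue-null subset of $\F^2$; since $\G$ is countable and $\B(\G)$ is the countable union of the sets $\B_\g$, this suffices. By definition, $\B_\g$ is the union of four sets of the form $\{(x,y) : \g x \in \E_x\}$, $\{(x,y) : \g x \in \E_y\}$, $\{(x,y) : \g y \in \E_x\}$, $\{(x,y) : \g y \in \E_y\}$, so it is enough to show each of these is null. First I would deal with $Z_1 \coloneqq \{(x,y)\in\F^2 : \g x \in \E_x\}$. Consider the analytic map $F : \F \to \F$, $F(x) = \g x$, which is a diffeomorphism (it is left translation by $\g$). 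The condition $\g x \in \E_x$ is a closed condition cutting out the set $\W \coloneqq \{x \in \F : \g x \in \E_x\}$, and one checks this is a proper analytic subvariety of $\F$: indeed, the ``bad'' locus $\{(x,z) : z \in \E_x\}$ is a proper analytic subvariety of $\F \times \F$ (its fibers $\E_x$ over each $x$ have codimension at least one by \eqref{eqn:dimE} and the Schubert cell decomposition, and everything varies analytically because $G$ acts analytically on $\F$), and $\W$ is the preimage of this subvariety under the analytic map $x \mapsto (x, \g x)$. Provided $\W \neq \F$, it has Lebesgue measure zero in $\F$, hence $Z_1 = \W \times \F$ is null in $\F^2$. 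That $\W \neq \F$: pick any regular $a \in A^+$ with attracting/repelling fixed points $x^\pm$; then $\g x^+ \neq x^+$ would be needed — more robustly, since $\G$ is non-elementary and torsion-free, $\g$ is not the identity and its fixed-point set in $\F$ is a proper analytic subvariety, and moreover there are points $x$ in general position with both $x$ and $\g x$ (e.g.\ take $x$ a limit point of $\G$ not fixed by $\g$, using minimality of the action on $\Lambda_\G$ and antipodality); this forces $x \notin \W$. The set $Z_4 \coloneqq \{(x,y) : \g y \in \E_y\} = \F \times \W$ is handled identically.

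Next, the mixed terms. Consider $Z_2 \coloneqq \{(x,y) \in \F^2 : \g x \in \E_y\}$. Reparametrize by the change of variables $(x,y) \mapsto (z,y)$ with $z = \g x$, which is a diffeomorphism of $\F^2$; under it $Z_2$ becomes $\{(z,y) : z \in \E_y\}$, i.e.\ $\bigcup_{y \in \F} \E_y \times \{y\}$. This is precisely the ``incidence variety'' $\{(z,y) : z \in \E_y\} \subset \F^2$, which I already observed is a proper analytic subvariety of $\F \times \F$; being a proper analytic subvariety of a manifold, it has Lebesgue measure zero. (Alternatively, apply Fubini: for each fixed $y$, the slice $\E_y$ has measure zero in $\F$ by \eqref{eqn:dimE}, hence the whole set is null; this is even cleaner and avoids appealing to analyticity a second time.) Pulling back by a diffeomorphism preserves nullity, so $Z_2$ is null. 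The remaining term $Z_3 \coloneqq \{(x,y) : \g y \in \E_x\}$ is symmetric: swap the roles of the two coordinates and repeat the Fubini argument, slicing over $x$ this time.

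Finally, assemble: $\B_\g = Z_1 \cup Z_2 \cup Z_3 \cup Z_4$ is a finite union of null sets, hence null, and $\B(\G) = \bigcup_{\g \neq 1_G} \B_\g$ is a countable union of null sets in $\F^2$, hence has zero Lebesgue measure. I expect the only genuinely delicate point to be verifying that $\W = \{x : \g x \in \E_x\}$ is a \emph{proper} subvariety — i.e.\ that it is not all of $\F$ — since the Fubini/incidence-variety argument for the mixed terms is routine. For the properness of $\W$, the cleanest route is: if $\g x \in \E_x$ for \emph{every} $x$, then in particular $\g$ maps every point to a point not antipodal to it, which contradicts the existence of a dense set of $\g$-generic points; concretely, since $\G$ is Anosov and non-elementary one can take $x$ to be an attracting fixed point $x_\eta \in \Lambda_\G$ of some $\eta \in \G$ with $\g x_\eta \neq x_\eta$ (such $\eta$ exists as $\Lambda_\G$ is infinite and $\g$ has finite fixed-point set), and then $\g x_\eta \in \Lambda_\G$ as well, so $\g x_\eta$ and $x_\eta$ are distinct limit points, hence antipodal, giving $\g x_\eta \notin \E_{x_\eta}$ and $x_\eta \notin \W$.
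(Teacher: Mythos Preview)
Your proof is correct and follows essentially the same approach as the paper: reduce to each $\B_\g$ by countability, decompose $\B_\g$ into the same four pieces, and use that each is a proper analytic subvariety (hence Lebesgue-null) of $\F^2$. Two small differences are worth noting. First, for the mixed terms $Z_2,Z_3$ you use a direct Fubini argument (each slice $\E_y$ is null), which is cleaner than the paper's uniform treatment via analyticity and avoids invoking \Cref{lem:Leb_analytic} for those pieces. Second, for properness the paper exhibits a single pair $(x,\g^2 x)$ with $x\in\Lambda_\G\setminus\{\g^\pm\}$ that lies outside all four pieces simultaneously, whereas you argue properness of the diagonal piece $\W=\{x:\g x\in\E_x\}$ separately by picking an attracting fixed point $x_\eta\in\Lambda_\G$ not fixed by $\g$; both work for the same reason (distinct limit points are antipodal). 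One phrasing to tighten: when you say ``$\g$ has finite fixed-point set,'' this need not hold in $\F$ for an arbitrary group element, but it does hold in $\Lambda_\G$ (exactly the two points $\g^\pm$, since $\G$ is torsion-free Anosov and hence word-hyperbolic), which is all you use.
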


As a consequence, we obtain:

\begin{corollary}\label{cor:badsetnull}
 If $\G$ is a torsion-free Anosov subgroup of $G$,
 then $\B(\G)\cap \C(\Lambda_\G)^2$ is a closed subset of $\C(\Lambda_\G)^2$. Moreover, if $\G$ is non-elementary, then $\B(\G)\cap \C(\Lambda_\G)^2$ is nowhere dense in $\C(\Lambda_\G)^2$.
\end{corollary}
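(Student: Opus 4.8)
The plan is to establish the two assertions in turn: the first (closedness in $\C(\Lambda_\G)^2$) from the contraction dynamics of \Cref{lem:contraction}, and the second (nowhere density) by feeding the first into the measure-zero statement of \Cref{prop:badsetnull}.

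\smallskip
For the closedness I would prove that $\C(\Lambda_\G)^2\setminus\B(\G)$ is open in $\C(\Lambda_\G)^2$. Let $\mathcal{I}=\{(a,b)\in\F^2:\ b\in\E_a\}$ denote the incidence subvariety; it is closed, being the complement in $\F^2$ of the open dense $G$-orbit of pairs in general position. Two preliminary observations: (i) $\C(\Lambda_\G)$ is open in $\F$, since $\E(\Lambda_\G)=\bigcup_{\lambda\in\Lambda_\G}\E_\lambda$ is the image of the compact set $\mathcal{I}\cap(\Lambda_\G\times\F)$ under a coordinate projection, hence closed; and (ii) for each fixed $\g\in G$ the set $\B_\g$ is closed in $\F^2$, being the union of the four preimages of $\mathcal{I}$ under the continuous maps $(x,y)\mapsto(x,\g x),\ (y,\g x),\ (x,\g y),\ (y,\g y)$. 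If $\B(\G)$ were a finite union of the sets $\B_\g$, openness of the complement would be immediate; \textbf{the sole obstacle, and the core of the argument, is the infinitude of $\G$}, and this is exactly what the properness built into \Cref{lem:contraction} is meant to overcome.

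\smallskip
The execution goes as follows. Fix $(x_0,y_0)\in\C(\Lambda_\G)^2\setminus\B(\G)$ and, using openness of $\C(\Lambda_\G)$, choose compact neighborhoods $B\ni x_0$ and $B'\ni y_0$ with $B_0:=B\cup B'\subset\C(\Lambda_\G)$. First I would check the uniform estimate
\[
 \delta\ :=\ \inf\{\,d_\F(z,\Lambda_\G)\ :\ x\in B_0,\ z\in\E_x\,\}\ >\ 0 ,
\]
which holds because $\mathcal{I}\cap(B_0\times\F)$ is compact and the continuous function $(x,z)\mapsto d_\F(z,\Lambda_\G)$ is strictly positive on it --- indeed $x\in B_0\subset\C(\Lambda_\G)$ forces $\E_x\cap\Lambda_\G=\emptyset$, so no $z\in\E_x$ lies in the closed set $\Lambda_\G$. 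Next, \Cref{lem:contraction} gives $\op{diam}\g B_0\to0$ and $d_\F(\g B_0,\Lambda_\G)\to0$ along any sequence of pairwise distinct elements of $\G$, so there is a \emph{finite} set $F\subset\G$ such that every point of $\g B_0$ lies within $\delta$ of $\Lambda_\G$ for all $\g\in\G\setminus F$. For such $\g$ and any $(x,y)\in B\times B'$ the points $\g x,\g y$ are within $\delta$ of $\Lambda_\G$ while $x,y\in B_0$, so by the definition of $\delta$ none of $\g x,\g y$ can lie in $\E_x\cup\E_y$; hence $(x,y)\notin\B_\g$. For the finitely many $\g\in F$ with $\g\neq1_G$ we have $(x_0,y_0)\notin\B_\g$ and $\B_\g$ is closed, so $(x_0,y_0)$ admits an open neighborhood $W_\g$ disjoint from $\B_\g$. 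Then $(\op{int}B\times\op{int}B')\cap\bigcap_{\g\in F\setminus\{1_G\}}W_\g$ is an open neighborhood of $(x_0,y_0)$ contained in $\C(\Lambda_\G)^2$ and disjoint from $\B_\g$ for every $\g\neq1_G$, hence from $\B(\G)$. This proves the first assertion.

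\smallskip
For the nowhere density, suppose in addition that $\G$ is non-elementary. By \Cref{prop:badsetnull}, $\B(\G)$ is Lebesgue-null in $\F^2$; since $\C(\Lambda_\G)^2$ is an open subset of the manifold $\F^2$, the Riemannian volume has full support there, so every nonempty open subset of $\C(\Lambda_\G)^2$ has positive measure. Hence $\B(\G)\cap\C(\Lambda_\G)^2$, being null, has empty interior in $\C(\Lambda_\G)^2$; together with its closedness from the first part, it is therefore nowhere dense in $\C(\Lambda_\G)^2$.
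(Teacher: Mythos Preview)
Your proof is correct and follows essentially the same strategy as the paper: use \Cref{lem:contraction} to show that only finitely many $\g$ can be ``bad'' near a given point, then handle those finitely many via the closedness of each $\B_\g$, and finally invoke \Cref{prop:badsetnull} for the nowhere density. The only organizational difference is that you prove the complement is open (choosing a neighborhood first and pushing $\g B_0$ close to $\Lambda_\G$, away from the $\E_x$'s), whereas the paper proves closedness via sequences (taking $(x_n,y_n)\in\B_{\g_n}$ and using $\g_n^{-1}\E(Z)=\E(\g_n^{-1}Z)\to\E(\Lambda_\G)$ to derive a contradiction); both are equivalent implementations of the same idea.
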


\begin{proof}
We first prove that $\C(\Lambda_\G)^2 \cap \B(\G)$ closed in $\C(\Lambda_\G)^2$.
Let $(x_n,y_n)\in\C(\Lambda_\G)^2 \cap \B(\G)$ be a sequence converging to $(x,y)\in \C(\Lambda_\G)^2$. We will show that $(x,y) \in \B(\G)$.

By definition, there exists a sequence $\g_n\in\G\smallsetminus\{1_G\}$ such that $(x_n,y_n)\in \B_{\g_n}$.
We first claim that the entries of such a sequence $(\g_n)$ form a finite subset of $\G$: If not, after extraction, we can assume that $(\g_n)$ has no repeated entries. Choose disjoint open neighborhoods $D$ and $U$ of  $\{x,y\}$ and  $\E(\Lambda_\G)$, respectively.
Consider the compact set  $Z = \{x,y\} \cup \{ x_n ,y_n:\ n\in\N\} \subset \C(\Lambda_\G)$.
By Lemma \ref{lem:contraction}, $\operatorname{diam}(\g_n^{-1} Z) \to 0$ and $d_\F(\g_n^{-1} Z,\Lambda_\G) \to 0$ as $n\to\infty$.
Thus, it follows that for all large $n$, $\g_n^{-1} \E(Z) = \E(\g_n^{-1} Z) \subset U$ and, so, $\g_nD\cap \E(Z)= \g_n(D\cap \g_n^{-1} \E(Z)) = \emptyset$. However, for all large $n$, $\g_n\{x_n,y_n\} \subset \g_n D$  and, so, $\g_n\{x_n,y_n\}\cap (\E_{x_n} \cup \E_{y_n}) = \emptyset$, contradicting $(x_n,y_n)\in \B_{\g_n}$.

Since $\B_g$ is closed in $\F^2$ for all $g\in G$, by the conclusion of the preceding paragraph, we must have that $(x,y) \in \B_{\g_n}$ for some $n$. This proves $\C(\Lambda_\G)^2 \cap \B(\G)$ closed in $\C(\Lambda_\G)^2$.

The ``moreover'' part of the result now follows from \Cref{prop:badsetnull}.
\end{proof}

The key ingredient in our proof of \Cref{prop:badsetnull}, proved at the end of this subsection, is the following lemma:

\begin{lemma}\label{lem:bad_g_points}
 If $\B_g$, $g\in G$, is a proper subset of $\F^2$, then $\B_g$ has Lebesgue measure zero.
\end{lemma}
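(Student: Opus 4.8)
The plan is to show that $\B_g$ is itself a closed analytic subvariety of $\F^2$, and then to invoke the dichotomy that a proper analytic subvariety of a connected real‑analytic manifold is Lebesgue‑null. First I would introduce the ``non‑transverse'' locus $\Delta \coloneqq \{(a,x)\in\F^2 :\ a\in\E_x\}$. Since $h\E_x = \E_{hx}$ for all $h\in G$ (because $h\C_x = \C_{hx}$), the set $\Delta$ is invariant under the diagonal $G$‑action on $\F^2$, and its complement $\{(a,x):\ a\in\C_x\}$ is the single open dense $G$‑orbit, namely that of $(w_0[P],[P])$; hence $\Delta$ is a proper closed analytic subvariety of $\F^2$, equivalently the union of the Bruhat cells of positive codimension (cf. the discussion around \eqref{eqn:dimE} and \cite{Bourbaki1}). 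Now I would write $\B_g = A_1\cup A_2\cup A_3\cup A_4$, where $A_1 = \{(x,y):\ gx\in\E_x\}$, $A_2 = \{(x,y):\ gx\in\E_y\}$, $A_3 = \{(x,y):\ gy\in\E_x\}$ and $A_4 = \{(x,y):\ gy\in\E_y\}$. Each $A_i$ is the preimage of $\Delta$ under an analytic self‑map of $\F^2$: for instance $A_1 = \phi_1^{-1}(\Delta)$ with $\phi_1(x,y)=(gx,x)$, and similarly $\phi_2(x,y)=(gx,y)$, $\phi_3(x,y)=(gy,x)$, $\phi_4(x,y)=(gy,y)$, each assembled from the analytic action of the fixed element $g$ on $\F$ together with coordinate projections. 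Since the preimage of a closed analytic subvariety under an analytic map is a closed analytic subvariety, and a finite union of closed analytic subvarieties is again one (locally, replace two families of defining functions by the family of their pairwise products), it follows that $\B_g$ is a closed analytic subvariety of $\F^2$.

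To conclude, I would use that $\F = G/P$ is connected (because $G$ is connected), hence so is the real‑analytic manifold $\F^2$. For a proper analytic subvariety $Z\subsetneq M$ of a connected real‑analytic manifold $M$, the interior of $Z$ is open and, by the identity theorem for analytic functions, closed, hence empty; therefore $Z$ is locally contained in the zero set of a real‑analytic function that does not vanish identically, such a zero set is Lebesgue‑null, and a countable cover of $M$ yields $\op{Leb}(Z)=0$. Applying this with $Z = \B_g$ under the standing hypothesis that $\B_g\ne\F^2$ gives $\op{Leb}(\B_g) = 0$, as desired.

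The step I expect to require the most care is the assertion that $\Delta$ is genuinely a closed analytic subvariety of $\F^2$ — that is, that the subvarieties $\E_x$ vary analytically with $x$, rather than merely each individual slice being one — which I would pin down via the description of its complement as the single open $G$‑orbit of transverse pairs, so that $\Delta$ is the closed complement of that orbit. The remaining ingredients (stability of analytic subvarieties under analytic preimages and finite unions, connectedness of $\F^2$, and the measure‑zero dichotomy for connected analytic manifolds) are routine.
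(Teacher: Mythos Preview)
Your proposal is correct and follows essentially the same route as the paper: decompose $\B_g$ into four pieces, exhibit each as the preimage of the non-transverse locus (your $\Delta$, which the paper writes as the diagonal orbit $G\cdot(\{x_0\}\times\E_{x_0})$) under an analytic map, and then invoke the dichotomy that a proper closed analytic subvariety of a connected real-analytic manifold has Lebesgue measure zero (stated separately in the paper as \Cref{lem:Leb_analytic}). The point you flag as needing care---that $\Delta$ is genuinely analytic in $\F^2$---is handled in the paper by the same orbit/Bruhat-cell description you give.
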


This result will be readily derived from the following lemma:

 \begin{lemma}\label{lem:Leb_analytic}
     Let $M$ be a connected real analytic manifold and $Y\subset M$ be a closed analytic subvariety.
     Then either $Y = M$ or $\op{Leb} Y = 0$. 
 \end{lemma}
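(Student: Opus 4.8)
The plan is to argue by induction on $\dim M$ using local charts, the fact that a nonzero real-analytic function on a connected open subset of $\R^n$ has a zero set of Lebesgue measure zero, and the identity principle for real-analytic functions. First I would reduce to the local statement: since $M$ is second countable, it is covered by countably many connected analytic charts $U_i \cong \Omega_i \subseteq \R^n$ ($n = \dim M$), and $\op{Leb} Y = 0$ iff $\op{Leb}(Y \cap U_i) = 0$ for every $i$; so it suffices to show that for each $i$, either $Y \cap U_i = U_i$ or $\op{Leb}(Y\cap U_i) = 0$. The subtlety is that "$Y \cap U_i = U_i$ for some $i$" must be upgraded to "$Y = M$", and this is exactly where connectedness of $M$ enters.

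Next I would handle the local claim. Fix a chart $U \cong \Omega$ with $\Omega \subseteq \R^n$ connected and open, and suppose $Y \cap U \ne U$. I want $\op{Leb}(Y \cap U) = 0$. Cover $Y \cap U$ by countably many of the "defining" neighborhoods from the definition of analytic subvariety: for each $z \in Y \cap U$ there is an open $V_z \ni z$ in $U$ and finitely many analytic $f_1^z, \dots, f_{k_z}^z : V_z \to \R$ with $Y \cap V_z = \{x \in V_z : f_j^z(x) = 0,\ \forall j\}$. Shrinking, take each $V_z$ connected. If on some $V_z$ all the $f_j^z$ vanish identically, then $V_z \subseteq Y$; otherwise some $f_j^z \not\equiv 0$ on the connected open set $V_z$, and then $Y \cap V_z \subseteq \{f_j^z = 0\}$ has measure zero by the standard real-analytic fact (the zero set of a nonzero real-analytic function on a connected open subset of $\R^n$ is Lebesgue-null — prove it by induction on $n$, slicing along lines and using that a nonzero one-variable analytic function has isolated zeros). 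So the "bad" part of $Y \cap U$ — the union of those $V_z$ contained in $Y$ — is relatively open in $Y \cap U$, while its complement in $Y\cap U$ is a countable union of null sets, hence null. Writing $Y^\circ$ for the interior of $Y$ (as a subset of $M$), we have shown: $Y \cap U \subseteq Y^\circ \cup (\text{null set})$, hence $\op{Leb}(Y \cap U \setminus Y^\circ) = 0$ for every chart, so $\op{Leb}(Y \setminus Y^\circ) = 0$.

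It remains to show $Y^\circ$ is either empty or all of $M$; combined with the previous paragraph this finishes the proof, since $Y^\circ = M$ forces $Y = M$ (as $Y$ is closed, $Y = \overline{Y^\circ} = \overline{M} = M$), while $Y^\circ = \emptyset$ gives $\op{Leb} Y = 0$. Since $M$ is connected, it is enough to check $Y^\circ$ is open (clear) and closed in $M$. Let $z \in \overline{Y^\circ}$; pick a connected defining neighborhood $V_z$ as above with analytic functions $f_1, \dots, f_k$ cutting out $Y \cap V_z$. Since $z \in \overline{Y^\circ}$ and $V_z$ is open, $V_z$ meets $Y^\circ$, so $V_z \cap Y^\circ$ contains a nonempty open set $W$ on which every $f_j$ vanishes; by the identity principle on the connected set $V_z$, each $f_j \equiv 0$ on $V_z$, whence $V_z \subseteq Y$ and in fact $V_z \subseteq Y^\circ$, so $z \in Y^\circ$. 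Thus $Y^\circ$ is closed, completing the argument.

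The main obstacle is the bookkeeping in the second paragraph: one must carefully separate, within each chart, the locally-constant-zero part of $Y$ (which is relatively open and goes into $Y^\circ$) from the part cut out by a genuinely nonzero analytic function (which is null), and then globalize via the identity principle and connectedness. The underlying real-analytic input — that the zero locus of a nonvanishing real-analytic function is Lebesgue-null — is routine (induction on dimension, Fubini, isolated zeros in one variable) and can be cited or sketched in a line.
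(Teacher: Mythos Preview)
Your proof is correct and complete. The approach differs from the paper's in a useful way, so a brief comparison is worthwhile.

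Both arguments rest on the same two ingredients: (i) the zero set of a nonvanishing real-analytic function on a connected open set is Lebesgue-null, and (ii) a clopen-subset argument exploiting the connectedness of $M$. The difference lies in how the clopen set is produced. The paper assumes $\op{Leb} Y > 0$, invokes the Lebesgue density theorem to find points where $Y$ has positive measure in every ball, and shows the set $Z$ of such points is nonempty and clopen. You instead work with the topological interior $Y^\circ$ directly: by covering $Y$ with defining neighborhoods and separating those on which all defining functions vanish identically from those with a genuinely nonzero defining function, you obtain the unconditional statement $\op{Leb}(Y\setminus Y^\circ)=0$, and then show $Y^\circ$ is clopen via the identity principle.

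Your route is slightly more elementary in that it avoids the Lebesgue density theorem; it also yields the marginally sharper intermediate fact that $Y\setminus Y^\circ$ is always null, not merely when $\op{Leb} Y>0$. The paper's route, on the other hand, packages the ``bad points have an open neighborhood in $Y$'' step and the closedness step more uniformly through the single auxiliary set $Z$. Either organization is perfectly adequate here.
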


 \begin{proof}
Suppose that $Y$ has a positive Lebesgue measure in $M$. We will show that $Y=M$.
Consider the set $Z$ of all points $y\in Y$ which satisfy the following: For all $\delta>0$, $\op{Leb} (B_\delta(y)\cap Y)>0$.
Here $B_\delta(y)$ denotes the closed ball of radius $\delta$ centered at $y$.

Recall that a point $y\in Y$ is called a density point if 
\[
\lim_{r\to0} \frac{\op{Leb} (B_r(y)\cap Y)}{\op{Leb} B_r(y)} = 1.
\]
The Lebesgue density theorem (see \cite[Cor. 2.14(1)]{Mattila}) asserts that almost all points of $Y$ are density points. 
Note that $Z$ contains all the density points of $Y$.
Therefore, $\op{Leb} Z = \op{Leb} Y >0$.
In particular,
$Z$ is nonempty.

Moreover, $Z$ is an open subset of $M$, which can be seen as follows: Since $Y$ is analytic,  for all $z\in Y$ there exists a connected open neighborhood $U$ of $z$ in $M$ and analytic functions $f_1,\dots,f_k : U\to \R$
such that $Y \cap U = \bigcap_i f^{-1}_i(0)$.
If $z\in Z$, then $\op{Leb} f^{-1}_i(0) \ge \op{Leb} (Y \cap U) >0$ for each $i=1,\dots,k$. 
Since $f_i$ is analytic, it follows that $f_i\equiv 0$ on $U$ (see \cite{mityagin}) for all $i=1,\dots,k$.
Thus, $U\subset Y$, showing that $U\subset Z$.

Furthermore, $Z$ is a closed subset of $M$: If $z_i\in Z$ is a sequence converging to $y\in Y$, then for all $\delta>0$, there exists $i\in\N$ such that $B_{\delta/2}(z_i)\subset B_{\delta}(y)$, showing that $\op{Leb} (B_{\delta}(y) \cap Y) \ge \op{Leb} (B_{\delta/2}(z_i) \cap Y) > 0$. Therefore, $y\in Z$.

Thus, $Z$ is a nonempty connected component of $M$.
Since $M$ is connected, we conclude that $Z = M$. As $Z\subset Y\subset M$, it follows that $Y = M$.
 \end{proof}

We return to the proof of \Cref{lem:bad_g_points}.

\begin{proof}[Proof of \Cref{lem:bad_g_points}]
By \Cref{lem:Leb_analytic}, it is enough to show that $\B_g$ is a closed analytic subvariety of $\F^2$:
 Let $G$ act diagonally on $\F^2$ and let us consider the analytic map $\varphi_g:\F \to \F^2$ given by $\varphi_g(x)= (x,g x)$.
The subset $\{x \in\F:\ g x\in \E_x\}$, which is the $\varphi_g$-preimage of the closed analytic subvariety $G\cdot (\{x_0\}\times\E_{x_0}) \subset \F^2$, is analytic.
Thus, the set $\B^1_g$ of all points $\{(x,y)\in\F^2:\ g x\in \E_x \}$ is also an analytic subvariety of $\F^2$.
Moreover, the set $\B^2_g$ of all points $\{(x,y)\in\F^2:\ g x\in \E_y \}$ is analytic since it is the preimage of the analytic subvariety $G\cdot (\E_{y_0}\times \{{y_0}\})$ under the analytic map $\F^2 \to \F^2$, $(x,y)\mapsto (g x,y)$.
Similarly, $\B_g^3 \coloneqq \{ (x,y):\ g y\in\E_y\}$ and $\B_g^4 \coloneqq \{ (x,y):\ g y\in\E_x\}$ are closed analytic subsets of $\F^2$.
Thus, $\B_g = \bigcup_{i=1}^4 \B^i_g$ is a closed analytic subvariety of $\F^2$.
\end{proof}

Now we are ready to give:

\begin{proof}[Proof of \Cref{prop:badsetnull}]
Since $\G$ is a countable group and a countable union of measure zero sets in $\F^2$ is measure zero, it is enough to know 
that $\op{Leb}\B_\g = 0$ for all $\g\in\G\smallsetminus\{1_G\}$.
Thus, by \Cref{lem:bad_g_points}, it is enough to show that $\B_\g\subset \F^2$ is proper:
Pick any point $x \in\Lambda_{\G}$ different from $\g^\pm$, where $\g^\pm$ denote the fixed points of $\g$ in $\Lambda_{\G}$, and set $y = \g^2 x$. Then, $\g y = \g^3 x\ne x$, $\g y \ne y$, $\g x \ne x$, and $\g x =\g^{-1} y \ne y$. Since distinct points in $\Lambda_{\G}$ are pairwise in general position, it follows that $\g x\not\in \E_x \cup \E_y$ and $\g y \not\in \E_x \cup \E_y$.
 So, $(x,y)$ lies in the complement of $\B_\g$. 
\end{proof}

\section{Main construction}\label{mainconstruction}

Once and for all, we fix a countable dense subset 
$
\{ z_n :\ n\in\N\} 
$
in the Furstenberg boundary $\F$ of $G$.
Then, we will inductively construct an increasing sequence of Anosov subgroups of $G$,
\begin{equation*}
    \{1_G\} = \G_0 \subset \G_1\subset \G_2\subset\cdots
\end{equation*}
such that the following conditions hold:

\begin{condition}\label{cdn}
For all $n\in\N$,
\begin{enumerate}
 \item $\G_{n}$ is an Anosov subgroup of $G$.
 \label{cdn:1}
 
 \item for each $\a\in\Pi$, 
$
 \delta_{\a}(\G_{n}) \le 1-\frac{1}{2^{n}}.
$ \label{cdn:2}

 \item $\G_n= \<\G_{n-1},\beta_n\>$ for some element $\beta_n$ which can be conjugated to a regular element in $A^+$.
 Moreover, the natural homomorphism from the abstract free product $\G_{n-1}\star \<\beta_n\>$ to $\G_n$ is injective.
 \label{cdn:3}
 
 \item if $x_n\in\F$ denotes the attractive fixed point of $\beta_n$, then $d_\F(x_n,z_n) \le 1/n$.
 \label{cdn:4}
\end{enumerate}
\end{condition}

To initiate the construction, let $\beta_1$ be a {regular element} in $A^+$. Since $\G_1 = \<\beta_1\>$ is cyclic, one can easily verify that $\G_1$ is an Anosov subgroup (see \Cref{def:Anosov}).
The limit set of $\Gamma_1$ comprises two points in $\mathcal{F}$, specifically, the unique attractive and repulsive fixed points of $\beta_1$. Given that $\Gamma_1$ is a cyclic group, one can check easily that $\delta_\alpha(\Gamma_1) = 0$ for all $\alpha\in\Pi$, thereby satisfying condition \ref{cdn:2}. The immediate consequence of $\Gamma_0$ being the trivial subgroup of $G$ is that $\Gamma_1$ also satisfies condition \ref{cdn:3}. Additionally, since the diameter of $\mathcal{F}$ is $1$, condition \ref{cdn:4} is evidently met by $\Gamma_1$. Finally, replacing $\beta_1$ by a large enough power of it, we may also assume that $\Gamma_1$ satisfies 
\begin{equation}\label{eqn:badsetforG_1}
    \op{Leb} \B({\G_1}) = 0
\end{equation}
where $\B({\G_1})$ is as in \eqref{def:badsetforGamma}.

The remainder of this section is now devoted to proving the inductive step. Throughout this section, we assume the existence of a group $\G_{n-1}$, $n\ge 2$, satisfying the inductive hypothesis outlined in \Cref{cdn}. In \Cref{sec:pf_induction}, we will complete the construction of $\G_n$, which will also satisfy \Cref{cdn}.

\subsection{Finding a suitable pair of points}\label{sec:xpm}

Note that \Cref{cdn}\ref{cdn:2} implies that
\begin{equation*}
 \delta_{\a}(\G_{n-1}) \le 1-\frac{1}{2^{n-1}},
 \quad
 \text{for all }\a\in\Pi.
\end{equation*}
Therefore, by \Cref{eqn:hdimbound},
$
 \Hdim (\Lambda_{\G_{n-1}}) \le 1-\frac{1}{2^{n-1}}.
$
So, \Cref{lem:cs} directly implies the following:

\begin{lemma}\label{cor:CL}
 $\C(\Lambda_{\G_{n-1}})$ is an open set of full measure in $\F$.
\end{lemma}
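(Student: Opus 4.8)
The statement to prove is Lemma~\ref{cor:CL}: that $\C(\Lambda_{\G_{n-1}})$ is an open set of full measure in $\F$.

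\medskip

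The plan is to simply chain together the results already established. First I would recall that by the inductive hypothesis \Cref{cdn}\ref{cdn:2} applied at level $n-1$, we have $\delta_\a(\G_{n-1}) \le 1 - \frac{1}{2^{n-1}} < 1$ for every simple root $\a \in \Pi$. Then I would invoke the Dey--Kim--Oh bound (\Cref{eqn:hdimbound}), which gives $\Hdim(\Lambda_{\G_{n-1}}) \le \max_{\a \in \Pi} \delta_\a(\G_{n-1}) \le 1 - \frac{1}{2^{n-1}} < 1$; this uses that $\G_{n-1}$ is Anosov, which is \Cref{cdn}\ref{cdn:1}. Since $\Lambda_{\G_{n-1}}$ is a closed (hence measurable) subset of $\F$ with Hausdorff dimension strictly less than one, \Cref{lem:cs} applies with $S = \Lambda_{\G_{n-1}}$ and yields $\Hdim \E(\Lambda_{\G_{n-1}}) < \dim \F$, hence $\op{Leb} \E(\Lambda_{\G_{n-1}}) = 0$ and $\C(\Lambda_{\G_{n-1}}) = \E(\Lambda_{\G_{n-1}})^c$ has full Lebesgue measure in $\F$.

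\medskip

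For openness, I would note that $\C(\Lambda_{\G_{n-1}}) = \bigcap_{x \in \Lambda_{\G_{n-1}}} \C_x$ is the complement of $\E(\Lambda_{\G_{n-1}}) = \bigcup_{x \in \Lambda_{\G_{n-1}}} \E_x$, so it suffices to show $\E(\Lambda_{\G_{n-1}})$ is closed. This follows from compactness of $\Lambda_{\G_{n-1}}$ together with the fact that the ``non-transverse'' relation $\{(x,y) \in \F^2 : y \in \E_x\}$ is closed in $\F^2$ (it is the $G$-orbit of $\{x_0\} \times \E_{x_0}$, an analytic subvariety, as used in the proof of \Cref{lem:bad_g_points}): a limit of points $y_k \in \E_{x_k}$ with $x_k \in \Lambda_{\G_{n-1}}$, after passing to a convergent subsequence of the $x_k$, lands in $\E_x$ for the limit point $x \in \Lambda_{\G_{n-1}}$.

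\medskip

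There is essentially no obstacle here — everything is a direct consequence of \Cref{eqn:hdimbound} and \Cref{lem:cs}, both already available. The only mild point requiring care is the openness assertion, which is not literally part of \Cref{lem:cs}'s conclusion (that lemma only gives full measure); but it is immediate from the compactness of the limit set and the closedness of the incidence variety, so this is a one-line remark rather than a genuine difficulty.
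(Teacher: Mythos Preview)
Your proof is correct and follows exactly the paper's approach: the paper simply notes (in the paragraph preceding the lemma) that \Cref{cdn}\ref{cdn:2} together with \Cref{eqn:hdimbound} gives $\Hdim(\Lambda_{\G_{n-1}}) < 1$, and then asserts that \Cref{lem:cs} ``directly implies'' the result. In fact you are slightly more careful than the paper, since \Cref{lem:cs} only literally yields the full-measure statement and you supply the (easy) compactness argument for openness that the paper leaves implicit.
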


In \S\ref{sec:ping-pong}, we will construct an Anosov subgroup of $G$, isomorphic to $\G_{n-1}\star \Z$, by adjoining a new loxodromic element $\beta \in G$ to $\G_{n-1}$. This subgroup will be constructed by a ping-pong argument. For this argument to work, we require the attractive/repulsive fixed points $x^\pm$ of $\beta$ to be in a general position relative to the limit set of $\G_{n-1}$. In addition, the pair $(x^+,x^-) \in \F^2$ must lie outside the set $\B(\G_{n-1})$ defined in \eqref{def:badsetforGamma}. The following result asserts that almost all points $(x^+,x^-)\in \F^2$ satisfy these conditions:

\begin{lemma}\label{lem:xpm}
    The set of all points $(x^+,x^-)$ in $\F^2$ satisfying the following two conditions is a dense open set:
    \begin{enumerate}
     \item $x^\pm \in \C(\Lambda_{\G_{n-1}})$, and $x^+,~x^-$ are in general position.\label{lem:xpm:cdn1}
     \item Neither $\g x^+$ nor $\g x^-$ lie in $\E(x^+)\cup\E(x^-)$, for all nontrivial elements $\g\in\G_{n-1}$.\label{lem:xpm:cdn2}
    \end{enumerate}
\end{lemma}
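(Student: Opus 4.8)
The plan is to rewrite the set $U$ appearing in the statement in a form to which the results of \S\ref{sec:bad_g_points} apply directly, and then establish openness and density separately. For a fixed $\g\in\G_{n-1}\setminus\{1_G\}$, condition \ref{lem:xpm:cdn2} is exactly the requirement that $(x^+,x^-)\notin\B_\g$; hence, by \eqref{def:badsetforGamma},
\[
 U=\bigl\{(x^+,x^-)\in\C(\Lambda_{\G_{n-1}})^2:\ x^+,x^-\text{ are in general position}\bigr\}\setminus\B(\G_{n-1}).
\]
So it is enough to analyze the three constraints ``$x^\pm\in\C(\Lambda_{\G_{n-1}})$'', ``$x^+,x^-$ in general position'', and ``$(x^+,x^-)\notin\B(\G_{n-1})$''.

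For \textbf{openness}: $\C(\Lambda_{\G_{n-1}})$ is open by \Cref{cor:CL}, so $\C(\Lambda_{\G_{n-1}})^2$ is open in $\F^2$; the set $\{(x,y)\in\F^2:\ x\in\E_y\}$ equals the closed analytic subvariety $G\cdot(\E_{y_0}\times\{y_0\})$ of $\F^2$ (as in the proof of \Cref{lem:bad_g_points}), so its complement, the set of pairs in general position, is open; and $\B(\G_{n-1})\cap\C(\Lambda_{\G_{n-1}})^2$ is closed in $\C(\Lambda_{\G_{n-1}})^2$ by \Cref{cor:badsetnull}, which applies since $\G_{n-1}$ is a torsion-free Anosov subgroup. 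Combining these three facts shows that $U$ is open in $\F^2$.

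For \textbf{density} I would show that the complement $\F^2\setminus U$ is Lebesgue-null, and then use that an open subset of the manifold $\F^2$ whose complement is null is automatically dense, since every nonempty open set has positive volume. Writing $\F^2\setminus U\subset(\F^2\setminus\C(\Lambda_{\G_{n-1}})^2)\cup\{(x,y):x\in\E_y\}\cup\B(\G_{n-1})$, the first set is null because $\C(\Lambda_{\G_{n-1}})$ has full measure in $\F$ by \Cref{cor:CL}, whence $\C(\Lambda_{\G_{n-1}})^2$ has full measure in $\F^2$ by Fubini; the second set is a \emph{proper} closed analytic subvariety of $\F^2$ (it misses every pair $(x,y_0)$ with $x\in\C_{y_0}$, and $\C_{y_0}\ne\emptyset$), and so is null by \Cref{lem:Leb_analytic}.

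The step that needs the most care is the third set, $\B(\G_{n-1})$. When $n\ge3$, the group $\G_{n-1}$ is, by \Cref{cdn}\ref{cdn:1} and \ref{cdn:3}, an Anosov free group of rank $n-1\ge2$, hence non-elementary and torsion-free, so \Cref{prop:badsetnull} gives $\op{Leb}\B(\G_{n-1})=0$. When $n=2$, however, $\G_1$ is cyclic and \Cref{prop:badsetnull} does not apply; here one must instead invoke the property $\op{Leb}\B(\G_1)=0$ recorded in \eqref{eqn:badsetforG_1}, which was arranged in the base case of the construction for exactly this purpose. Granting this, $\F^2\setminus U$ is Lebesgue-null, so the open set $U$ is dense, which completes the proof.
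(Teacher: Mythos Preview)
Your proof is correct and follows essentially the same approach as the paper: both identify the set in question as the intersection of $\C(\Lambda_{\G_{n-1}})^2$, the set of pairs in general position, and the complement of $\B(\G_{n-1})$, then invoke \Cref{cor:CL}, \Cref{cor:badsetnull}, \Cref{prop:badsetnull} (for $n\ge3$), and \eqref{eqn:badsetforG_1} (for $n=2$). The only difference is organizational---you separate openness and density and argue density via Lebesgue-nullity of the complement, whereas the paper packages conditions \ref{lem:xpm:cdn1} and \ref{lem:xpm:cdn2} into two open dense sets and intersects them---but the substance is identical.
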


\begin{proof}
 \Cref{cor:CL} asserts that the open set $\C(\Lambda_{\G_{n-1}})$ has a full measure in $\F$. Consequently, the set $U$ of all points in $\F^2$ satisfying condition \ref{lem:xpm:cdn1}
 is open and dense in $\F^2$.
 
 On the other hand, we also have that $V = \C(\Lambda_{\G_{n-1}})^{2}\smallsetminus \B(\G_{n-1})$ is a dense open set in $\C(\Lambda_{\G_{n-1}})^{2}$ and, hence, in $\F^2$ (as $\C(\Lambda_{\G_{n-1}})$ is dense in $\F$): If $n\ge 3$, this conclusion arises from \Cref{cor:badsetnull} as in this case $\G_{n-1}$ is an Anosov subgroup isomorphic to a free group of rank $\ge 2$ (thus $\G_{n-1}$ is torsion-free and non-elementary).
 If $n=2$, then the conclusion follows from the first part of \Cref{cor:badsetnull} and \eqref{eqn:badsetforG_1}.
 
 Thus, the set of all points $(x^+,x^-)$ in $\F^2$ satisfying conditions \ref{lem:xpm:cdn1} and \ref{lem:xpm:cdn2}, which is the intersection of $U$ and $V$, is dense and open in $\F^2$.
\end{proof}

In the rest of this section, we fix a pair of points $x^\pm\in\F$, which satisfies conditions \ref{lem:xpm:cdn1} and \ref{lem:xpm:cdn2} of \Cref{lem:xpm} and, further, the following:
\begin{equation}\label{eqn:xpm}
    d_{F}(x^\pm,z_n) \le \frac{1}{n},
\end{equation}
where $z_n$ is as described by the first paragraph of this section.
Note that \Cref{lem:xpm} guarantees the existence of such a pair of points.
Moreover, for convenience, we fix a Cartan decomposition $G = KA^+K$ such that the attractive/repulsive fixed points in $\F$ of a (equivalently, any) regular element $a\in A^+$ is $x^\pm$, respectively.

\subsubsection{Orbits of ${x^\pm}$}
\begin{lemma}\label{cor:precpt}
Let $x^\pm\in\F$ be as above. Then:
\begin{enumerate}
    \item  $\{\g x^+ :\ \g\in \G_{n-1}\}$ and $\{\g x^- :\ \g\in \G_{n-1},\g\ne 1\}$ are precompact subsets of $\C_{x^-}$ whose closures are obtained by attaching the limit set $\Lambda_{\G_{n-1}}$ of $\G_{n-1}$.\label{cor:precpt:i}

    \item $\{\g x^- :\ \g\in \G_{n-1}\}$ and $\{\g x^+ :\ \g\in \G_{n-1}, \g\ne 1\}$  are precompact subsets of $\C_{x^+}$ whose closure is obtained by attaching $\Lambda_{\G_{n-1}}$.\label{cor:precpt:ii}

    \item there exists $\epsilon>0$ such that 
    $d_\F(k_2(\g)\{x^+,x^-\},\, \E_{x_-})\ge\epsilon$ for all but finitely many elements $\g\in \G_{n-1}$.\label{cor:precpt:iii}
\end{enumerate}
\end{lemma}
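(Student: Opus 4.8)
The plan is to deduce all three statements from the contraction dynamics of the Anosov group $\G_{n-1}$ established in \Cref{lem:contraction} and \Cref{lem:k}, using crucially that the pair $x^\pm$ was chosen to satisfy conditions \ref{lem:xpm:cdn1} and \ref{lem:xpm:cdn2} of \Cref{lem:xpm}. First I would prove \ref{cor:precpt:i}. Since $x^+ \in \C(\Lambda_{\G_{n-1}})$ and, in particular, $x^+ \in \C_{x^-}$, the last assertion of \Cref{lem:contraction} (applied to the group $\G_{n-1}$ and the point $y = x^+$) gives that the closure of the orbit $\G_{n-1}\cdot x^+$ in $\F$ is exactly $(\G_{n-1}\cdot x^+)\sqcup\Lambda_{\G_{n-1}}$; since $\Lambda_{\G_{n-1}}\subset\C_{x^-}$ (as $x^- \in \C(\Lambda_{\G_{n-1}})$ and antipodality is symmetric) and $\G_{n-1}\cdot x^+ \subset \C_{x^-}$ by condition \ref{lem:xpm:cdn2} (taking $\g$ trivial gives $x^+ \in \C_{x^-}$, and for nontrivial $\g$, condition \ref{lem:xpm:cdn2} says $\g x^+ \notin \E_{x^-}$), the whole orbit closure sits inside the open set $\C_{x^-}$, so the orbit is precompact there. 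For the orbit $\{\g x^- : \g \ne 1\}$ the same argument applies with $y = x^-$: \Cref{lem:contraction} gives orbit closure $(\G_{n-1}\cdot x^-)\sqcup\Lambda_{\G_{n-1}}$, and condition \ref{lem:xpm:cdn2} ensures $\g x^- \notin \E_{x^-}$ for $\g \ne 1$, hence $\g x^- \in \C_{x^-}$; combined with $\Lambda_{\G_{n-1}}\subset\C_{x^-}$ this gives precompactness in $\C_{x^-}$. Statement \ref{cor:precpt:ii} is proved identically with the roles of $x^+$ and $x^-$ interchanged, which is legitimate because conditions \ref{lem:xpm:cdn1} and \ref{lem:xpm:cdn2} are symmetric in $x^+$ and $x^-$.

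For \ref{cor:precpt:iii} I would like to invoke \Cref{lem:k} directly: taking $B = \{x^+, x^-\}$, which is a nonempty compact subset of $\C(\Lambda_{\G_{n-1}})$ by condition \ref{lem:xpm:cdn1}, and recalling that the Cartan decomposition $G = KA^+K$ was fixed precisely so that $x^-$ is the repulsive fixed point of the regular elements of $A^+$, \Cref{lem:k} yields $\liminf\{d_\F(k_2(\g)\{x^+,x^-\}, \E_{x^-}) : \g \in \G_{n-1}\} > 0$. This positive lower bound on the $\liminf$ over the infinite set $\G_{n-1}$ immediately gives a uniform $\epsilon > 0$ with $d_\F(k_2(\g)\{x^+,x^-\}, \E_{x^-}) \ge \epsilon$ for all but finitely many $\g \in \G_{n-1}$, which is exactly the claim. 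One small point to check is that the decomposition $\g = k_1(\g)a(\g)k_2(\g)$ used here is the same kind of choice as in \Cref{lem:k}; since \Cref{lem:k} is stated for an arbitrary such choice, this is automatic.

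The only genuine subtlety — and the step I would treat most carefully — is the bookkeeping in \ref{cor:precpt:i}–\ref{cor:precpt:ii} distinguishing whether the identity element is included: the point $x^-$ itself lies in $\E_{x^-}$ (it is not in general position with itself), so $\{\g x^- : \g \in \G_{n-1}\}$ is \emph{not} contained in $\C_{x^-}$, which is why the statement restricts to $\g \ne 1$ in part \ref{cor:precpt:i} and includes the full orbit only for $x^+$; symmetrically in part \ref{cor:precpt:ii}. I would phrase the argument so that for the ``good'' orbit one uses condition \ref{lem:xpm:cdn2} for all $\g$ (including $\g = 1$, which recovers general position of $x^+$ and $x^-$), while for the ``punctured'' orbit one uses condition \ref{lem:xpm:cdn2} only for $\g \ne 1$ and separately notes $\Lambda_{\G_{n-1}} \subset \C_{x^-}$. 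Everything else is a direct citation of \Cref{lem:contraction} and \Cref{lem:k}, so no real obstacle remains once this indexing is handled.
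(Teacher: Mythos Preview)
Your proposal is correct and follows essentially the same route as the paper: deduce \ref{cor:precpt:i} and \ref{cor:precpt:ii} from \Cref{lem:contraction} together with conditions \ref{lem:xpm:cdn1} and \ref{lem:xpm:cdn2} of \Cref{lem:xpm}, and obtain \ref{cor:precpt:iii} as a direct consequence of \Cref{lem:k} with $B=\{x^+,x^-\}$. One tiny wording fix: the case $\g=1$ for $x^+$ (namely $x^+\in\C_{x^-}$) comes from condition \ref{lem:xpm:cdn1} (general position), not from condition \ref{lem:xpm:cdn2}, which is stated only for nontrivial $\g$; the paper handles this the same way, invoking condition \ref{lem:xpm:cdn2} together with the separate fact that $x^+\notin\E_{x^-}$.
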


\begin{proof}
    By condition \ref{lem:xpm:cdn1} of \Cref{lem:xpm}, we have that $\Lambda_{\G_{n-1}} \subset \C_{x^-}\cap \C_{x^+}$.
    Moreover, by condition \ref{lem:xpm:cdn2} of \Cref{lem:xpm} and the fact that $x^+\not\in\E(x^-)$, we obtain that $\{\g x^+ :\ \g\in \G_{n-1}\}$ and $\{\g x^- :\ \g\in \G_{n-1},\g\ne 1\}$ are both disjoint from $\E(x^-)$.
    As $x^\pm\in \C(\Lambda_{\G_{n-1}})$, by \Cref{lem:contraction}, we have 
    \[\overline{\G_{n-1}\cdot x^\pm} \Hquad=\Hquad (\G_{n-1}\cdot x^\pm) \sqcup \Lambda_{\G}.\]
    Moreover, since $\Lambda_{\G}\subset\C_{x^-}$, $\Lambda_{\G}$ is also disjoint from $\E(x^-)$.
    Thus, \ref{cor:precpt:i} follows.

    The proof of \ref{cor:precpt:ii} is similar to the one of \ref{cor:precpt:i}. So, we omit the details.
    
    Finally, \ref{cor:precpt:iii} is a consequence of \Cref{lem:k}.
\end{proof}

\subsection{Ping-pong}\label{sec:ping-pong}

Let $x^\pm \in\F$ be as before; see the paragraph after the proof of \Cref{lem:xpm}.
Let $B^\pm_r$ 
be closed metric balls of radii $r>0$ and with centers at $x^\pm$, respectively.
Assume that $r$ is small enough such that $B^+_r\cap B^-_r=\emptyset$ and $B^\pm_r \subset C(\Lambda_{\G_{n-1}})$. 
Let $L$  be a compact subset of $\F$ containing $\Lambda_{\G_{n-1}}$ in its interior.
We can (and will) further assume that $L$ disjoint from $B^\pm_r$ and $\E_{x^\pm}$.

Since $\G_{n-1}$ is Anosov and $(B^+_r\cup B^-_r) \subset C(\Lambda_{\G_{n-1}})$, by \Cref{lem:contraction},
$
 \g (B^+_r\cup B^-_r) \subset L
$
for all but finitely many elements in $\G_{n-1}$.
Thus, applying \Cref{cor:precpt}\ref{cor:precpt:i}, \ref{cor:precpt:ii}, we see that up to making $r$ smaller, we can ensure the following:
If 
\[
B_r \coloneqq B^+_r\cup B^-_r \quad\text{and}\quad D_r \coloneqq L \cup \bigcup_{\g\in \G_{n-1}\smallsetminus\{1\}} \g B_r,
\]
then for all $x\in B_r$ and $y\in D_r$, we have that 
$x$ and $y$ are in general position.
Furthermore, up to making $r$ even smaller, \Cref{cor:precpt}\ref{cor:precpt:iii} implies that
there exists $\epsilon>0$ and a finite subset $S\subset \G$ such that
$d_\F(k_2(\g) B^\pm_r, \, \E_{x^-}) >\epsilon$ for all $\g\in\G \smallsetminus S$. Thus,
\begin{equation}\label{eqn:ping-pong0}
    Z \coloneqq \bigcup_{\g\in\G\smallsetminus S} k_2(\g) (B^+_r\cup B^-_r) 
\end{equation}
is a precompact subset of $\C_{x^-}$.

We define 
$B = B^+_{r/2} \cup B^-_{r/2}$ and $D = D_r$. Clearly, we have that 
\begin{equation}\label{eqn:ping-pong1}
    \g B \subset D^\circ \quad \text{for all nontrivial } \g\in \G_{n-1}.
\end{equation}
Let us also pick a regular element $\beta\in A^+$ such that
\begin{equation}\label{eqn:ping-pong2}
 \beta^{\pm k} D \subset B^\circ\quad \text{for all } k\in\N
\end{equation}
(this can be done since $D$ is a compact subset of $\C_{x^+}\cap \C_{x^-}$).

Thus, applying the Combination Theorem \cite[Thm. A]{DK23}, we obtain the following:

\begin{proposition}\label{prop:anosov}
    For every $m \in \N$, the homomorphism $\rho_m: \G_{n-1} \star \Z \to G$, where $\rho_m$ is the inclusion map on the free factor $\G_{n-1}$ and maps $1 \in \Z$ to $\beta^m$, is an (injective) Anosov representation.
\end{proposition}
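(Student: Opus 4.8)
The plan is to verify the hypotheses of the Combination Theorem \cite[Thm. A]{DK23} in the ping-pong configuration that has been carefully assembled just above the statement, and then read off the conclusion. The setup already provides a compact ``ping-pong table'' consisting of the two pieces $B = B^+_{r/2}\cup B^-_{r/2}$ (a neighborhood of the fixed points $x^\pm$ of $\beta$) and $D = D_r$ (a compact neighborhood of $\Lambda_{\G_{n-1}}$, enlarged by the orbit of $B_r$ under $\G_{n-1}$), together with the two key dynamical inclusions \eqref{eqn:ping-pong1} and \eqref{eqn:ping-pong2}: every nontrivial element of $\G_{n-1}$ maps $B$ into the interior of $D$, and every nonzero power of $\beta$ maps $D$ into the interior of $B$. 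One also has, by construction, that points of $B$ and points of $D$ are in general position (antipodal) with respect to each other, and that $\G_{n-1}$ is Anosov while $\langle\beta\rangle\cong\Z$ is Anosov (being generated by a regular, hence loxodromic, element).

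First I would recall the precise hypotheses of \cite[Thm. A]{DK23}: one needs two Anosov subgroups $H_1, H_2$ of $G$, together with disjoint compact ``attracting'' regions $\Omega_1,\Omega_2\subset\F$ (here playing the role of $D$ and $B$), each containing the respective limit set in its interior, mutually in general position, such that every nontrivial element of $H_1$ maps $\Omega_2$ into the interior of $\Omega_1$ and every nontrivial element of $H_2$ maps $\Omega_1$ into the interior of $\Omega_2$. The theorem then guarantees that the subgroup $\langle H_1, H_2\rangle$ is Anosov and that the natural map $H_1\star H_2\to G$ is injective, i.e. the representation is a faithful Anosov representation of the free product. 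I would take $H_1 = \G_{n-1}$, $H_2 = \langle\beta^m\rangle$, $\Omega_1 = D$, $\Omega_2 = B$. The inclusions \eqref{eqn:ping-pong1} and \eqref{eqn:ping-pong2} are exactly the ping-pong conditions (note \eqref{eqn:ping-pong2} is stated for all $k\in\N$ and all signs, so it applies verbatim to the cyclic group generated by any fixed power $\beta^m$, whose nontrivial elements are the $\beta^{\pm mk}$). The general-position requirement between $B$ and $D$ was arranged in the paragraph after \eqref{eqn:ping-pong0}, and the containments $\Lambda_{\G_{n-1}}\subset L^\circ\subset D^\circ$ and $\{x^+,x^-\} = \Lambda_{\langle\beta\rangle}\subset B^\circ$ are immediate from the choice of $L$ and the radii. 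Thus all hypotheses hold, and the Combination Theorem yields that $\rho_m$ is an injective Anosov representation; since $\rho_m$ restricted to the free factor $\G_{n-1}$ is the inclusion and $\rho_m(1_\Z)=\beta^m$, this is precisely the assertion of \Cref{prop:anosov}.

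The only genuine point requiring care — and the step I expect to be the main obstacle — is checking that the general-position and Anosov-regularity hypotheses are robust under passing from $\beta$ to $\beta^m$ for every $m$, rather than for one sufficiently large $m$. This is why the construction was arranged so that \eqref{eqn:ping-pong2} holds for all $k\in\N$ simultaneously (using that $D$ is a compact subset of $\C_{x^+}\cap\C_{x^-}$, so its forward/backward $\beta$-orbit contracts monotonically into $B^\circ$): consequently the table $(B,D)$ works uniformly for $\langle\beta^m\rangle$ for every $m\geq 1$, with no largeness assumption on $m$. A secondary bookkeeping point is to make sure the precompactness of $Z$ in $\C_{x^-}$ recorded in \eqref{eqn:ping-pong0} is not actually needed for this proposition — it is not, being reserved for later steps controlling the limit set and critical exponents — so the invocation of \cite[Thm. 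A]{DK23} here is clean. With these observations in place, the proof is a direct citation.

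\begin{proof}[Proof of \Cref{prop:anosov}]
    Fix $m\in\N$. We apply the Combination Theorem \cite[Thm. A]{DK23} to the two Anosov subgroups $H_1 = \G_{n-1}$ and $H_2 = \<\beta^m\>$ of $G$, with ``ping-pong regions'' $\Omega_1 = D$ and $\Omega_2 = B$. Here $H_2\cong\Z$ is Anosov because $\beta$, being regular in $A^+$, is loxodromic; its limit set is $\{x^+,x^-\}\subset B^\circ$. Likewise $\Lambda_{\G_{n-1}}\subset L^\circ\subset D^\circ$. The regions $B$ and $D$ are compact, disjoint, and — by the arrangement made in the paragraph following \eqref{eqn:ping-pong0} — every point of $B$ is in general position with every point of $D$. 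The ping-pong inclusions required by \cite[Thm. A]{DK23} hold: by \eqref{eqn:ping-pong1}, $\g B\subset D^\circ$ for every nontrivial $\g\in\G_{n-1}$, and by \eqref{eqn:ping-pong2}, $\beta^{mk} D\subset B^\circ$ for every nonzero $k\in\Z$, which covers all nontrivial elements of $\<\beta^m\>$. Therefore \cite[Thm. A]{DK23} applies and shows that $\<\G_{n-1},\beta^m\>$ is Anosov in $G$ and that the natural homomorphism $\G_{n-1}\star\<\beta^m\>\to G$ is injective. Since $\rho_m$ is by definition the inclusion on the free factor $\G_{n-1}$ and sends $1\in\Z$ to $\beta^m$, it follows that $\rho_m$ is an injective Anosov representation.
\end{proof}
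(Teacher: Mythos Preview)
Your proposal is correct and follows exactly the paper's approach: the paper sets up the ping-pong configuration via \eqref{eqn:ping-pong1} and \eqref{eqn:ping-pong2} together with the mutual general-position of $B$ and $D$, and then simply invokes \cite[Thm.~A]{DK23} to conclude. Your write-up merely spells out in more detail the verification of those hypotheses (including the observation that \eqref{eqn:ping-pong2} holds for all nonzero powers, so no largeness of $m$ is needed), which is entirely in line with what the paper intends.
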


In other words, $\<\G_{n-1}, \beta^m\>$ is an Anosov subgroup of $G$ for all $m\in\N$ and it is isomorphic to the free group of rank $n$.

\medskip

Note that by construction, the group $\<\G_{n-1}, \beta^m\>$ satisfies items \ref{cdn:1}, \ref{cdn:3}, and \ref{cdn:4} of \Cref{cdn} for all $m\in\N$.
Therefore, what remains is to verify the other two conditions.
In \S\ref{sec:crit}, we show that 
$\<\G_{n-1}, \beta^m\>$ will satisfy \ref{cdn:2} for some large enough $m$.

\subsection{Controlling the critical exponent}\label{sec:crit}

Let $\beta\in A^+$ be the element introduced in \Cref{sec:ping-pong}.
The goal of this subsection is to prove the following result, which states that
the Anosov subgroup of $G$ generated by $\G_{n-1}$ and a {\em large enough power} of $\beta$ satisfies \Cref{cdn}\ref{cdn:2}:

\begin{proposition}[Main estimate]\label{prop:crit}
There exists $m_0\in\N$ such that for all $\a\in\Pi$, 
 \[
  \delta_\a(\G_{n}) \LE \delta_\a(\G_{n-1}) + \frac{1}{2^{n}} \LE 1-\frac{1}{2^{n}},
 \]
 where $\G_n \coloneqq \<\G_{n-1},\, \beta^{m_0} \>$.
\end{proposition}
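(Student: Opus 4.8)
The plan is to bound the orbit counting function of $\G_n=\G_{n-1}\star\langle\beta^{m_0}\rangle$ by grouping elements according to their normal form in the free product. Fix $\a\in\Pi$. Every nontrivial $\g\in\G_n$ has a unique reduced expression $\g = \eta_0 \beta^{m_0 k_1}\eta_1\beta^{m_0 k_2}\cdots \beta^{m_0 k_\ell}\eta_\ell$ with $\eta_i\in\G_{n-1}$ (only $\eta_0,\eta_\ell$ allowed trivial) and $k_i\in\Z\setminus\{0\}$. Because the ping-pong sets $B$ and $D$ from \S\ref{sec:ping-pong} are disjoint and satisfy \eqref{eqn:ping-pong1}--\eqref{eqn:ping-pong2}, the Cartan projection behaves almost additively along such a word: there is a constant $C=C(r,L)\ge 0$, independent of $m_0$, such that
\[
 \a(\mu(\g)) \GE \sum_{i} \a(\mu(\eta_i)) + \sum_{j} \a\big(\mu(\beta^{m_0 k_j})\big) - C\,(\text{word-length in the free product}).
\]
Such an estimate is exactly the content of the quantitative/uniform form of the Combination Theorem \cite[Thm. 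A]{DK23} (or can be extracted from the ping-pong dynamics directly, using \Cref{lem:k} and \Cref{cor:precpt}\ref{cor:precpt:iii} to control the $K$-parts). The key feature is that $C$ does not depend on $m_0$, while $\a(\mu(\beta^{m_0 k_j})) = m_0|k_j|\,\a(\mu(\beta))$ grows linearly in $m_0$; choosing $m_0$ large makes the free-factor $\langle\beta^{m_0}\rangle$ contribute a genuine ``cost'' per syllable that dwarfs $C$.

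Next I would set up the counting. Let $N_{n-1}(R) = \#\{\eta\in\G_{n-1} : \a(\mu(\eta))\le R\}$; by definition of the critical exponent, for every $\epsilon>0$ there is $C_\epsilon$ with $N_{n-1}(R)\le C_\epsilon e^{(\delta_\a(\G_{n-1})+\epsilon)R}$ for all $R\ge 0$. Similarly $\#\{k\in\Z : m_0|k|\,\a(\mu(\beta))\le R\} \le 2 + \tfrac{2R}{m_0\,\a(\mu(\beta))}$, which for $m_0$ large is dominated by $e^{\epsilon R}$ up to a constant. Summing over all normal forms: the number of $\g\in\G_n$ with $\a(\mu(\g))\le R$ is at most
\[
 \sum_{\ell\ge 0}\;\; \sum_{\substack{R_0+\cdots+R_\ell + T_1 + \cdots + T_\ell \le R + C(2\ell+1)}} \prod_i N_{n-1}(R_i)\cdot \prod_j \#\{k : m_0|k|\a(\mu(\beta))\le T_j\},
\]
where $R_i$ runs over the "budget" spent on each $\G_{n-1}$-syllable and $T_j$ over the budget on each $\beta$-power syllable. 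Discretizing the budgets into unit intervals and using the two counting bounds, each term is at most (a constant times) $e^{(\delta_\a(\G_{n-1})+2\epsilon)(R + C(2\ell+1))}$ times a combinatorial factor counting the number of ways to split $R$ into $2\ell+1$ ordered pieces, which is polynomial in $R$ of degree $2\ell$. The number of syllables $\ell$ is bounded by a constant times $R$ (each $\beta$-syllable costs at least $m_0\a(\mu(\beta)) - C \ge 1$, say), so the sum over $\ell$ is a sum of $O(R)$ terms each bounded by $e^{(\delta_\a(\G_{n-1})+2\epsilon)R}\cdot e^{O(\ell)}\cdot R^{O(\ell)}$. To keep this under control I would choose $m_0$ so large that the per-syllable geometric saving beats the per-syllable loss $e^{2\epsilon C}$ coming from the error term $C(2\ell+1)$ and the polynomial factor $R^{2}$; concretely, arrange $m_0\,\a(\mu(\beta)) \ge 2C + (\text{slack})$ so that the series $\sum_\ell (\text{ratio})^\ell$ converges.

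Putting the pieces together yields $\#\{\g\in\G_n : \a(\mu(\g))\le R\} \le C_\epsilon' e^{(\delta_\a(\G_{n-1})+ 3\epsilon)R}$ for all $R$, hence $\delta_\a(\G_n)\le \delta_\a(\G_{n-1})+3\epsilon$; taking $\epsilon = \tfrac{1}{3\cdot 2^n}$ and absorbing it into the choice of $m_0$ gives $\delta_\a(\G_n)\le \delta_\a(\G_{n-1}) + 2^{-n}$, and then $\delta_\a(\G_{n-1})\le 1 - 2^{-(n-1)}$ from \Cref{cdn}\ref{cdn:2} gives the second inequality $\delta_\a(\G_n)\le 1 - 2^{-n}$. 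Since $m_0$ can be chosen uniformly over the finite set $\Pi$ (take the max of the finitely many thresholds), the statement follows. \emph{The main obstacle} is making the almost-additivity estimate for $\mu$ along free-product normal forms genuinely uniform in $m_0$ — i.e., isolating a Cartan-projection "quasi-additivity" constant $C$ that depends only on the ping-pong configuration $(B^\pm_r, L, D)$ and not on which power of $\beta$ is used — and then bookkeeping the combinatorial explosion from splitting the budget $R$ among an unbounded number of syllables so that it is killed by the exponential per-syllable gain once $m_0$ is large. I expect the cleanest route is to invoke the Combination Theorem of \cite{DK23} in a form that already packages this uniform contraction/Cartan estimate, rather than re-deriving the ping-pong bounds by hand.
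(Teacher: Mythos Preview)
Your strategy is the paper's strategy: get a quasi-additivity estimate for $\a(\mu(\cdot))$ along free-product normal forms with a constant $C$ independent of $m_0$, then count words by distributing the budget $R$ among syllables and show the combinatorial factor is sub-exponential once $m_0$ is large. Two points deserve sharpening.

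\medskip
\textbf{(1) The quasi-additivity constant.} This is precisely the paper's Key Lemma (\Cref{lem:wl}), but it is \emph{not} packaged inside \cite{DK23}; that reference only gives the Anosov property. The paper proves \Cref{lem:wl} by passing to the Tits representations $(\rho_\a,V_\a)$, so that $\a(\mu(g))$ becomes (a linear combination of) $\log\|\rho_\a(g)\|$, and then showing $\|\rho_\a(\beta^j\g)v\|\ge C\|\rho_\a(\beta)\|^{|j|}\|\rho_\a(\g)\|\|v\|$ for $[v]\in B_\a$ via \Cref{lem:opnorm} and the ping-pong inclusions \eqref{eqn:ping-pong3}. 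The ingredients you name (\Cref{lem:k}, \Cref{cor:precpt}\ref{cor:precpt:iii}) are exactly what feeds into this, but you would still have to run the operator-norm argument; it does not fall out of a black-box combination theorem. Also note $\a(\mu(\beta^{m_0 k}))$ equals $m_0|k|\,\a(\mu(\beta^{\varepsilon(k)}))$, not $m_0|k|\,\a(\mu(\beta))$, since $\mu(\beta^{-1})\ne\mu(\beta)$ in general.

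\medskip
\textbf{(2) The combinatorial factor.} Your phrase ``polynomial in $R$ of degree $2\ell$'' undersells the difficulty: since $\ell$ can be as large as $\asymp R/m_0$, a bound of the form $R^{O(\ell)}$ is $R^{O(R)}$, which is super-exponential and cannot be beaten by any fixed per-syllable geometric saving. The correct mechanism, which the paper makes explicit in \Cref{lem:finalestimate}, is the estimate $\binom{P}{Q}\le (eP/Q)^Q$: with $P\asymp R$ and $Q=\ell\le R/(N-C)$ one gets a factor $\le (eN)^{R/(N-C)}$, and it is the fact that $(eN)^{1/(N-C)}\to 1$ as $N\to\infty$ that makes the whole combinatorial piece $\le e^{\epsilon R/7}$ for $N$ (hence $m_0$) large. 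Once you insert this bound your counting goes through exactly as in the paper, yielding \eqref{eqn:crit:final} and the conclusion.
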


\begin{remark}
    Our readers may observe that the fact $\G_{n-1}$ being a free group doesn't play any particular role in the proofs. In fact, some of the results in this section can be easily adapted to prove the following:

\medskip\noindent
{\bf Proposition.} {\em 
    Suppose $\Delta$ is a non-elementary\footnote{One may also allow $\Delta$ to be elementary, i.e., $\Delta\cong \Z$; however, in order for this proposition to be true in this case, it might be necessary to pass to a finite index subgroup of $\Delta$ to ensure that $\B(\Delta)$, defined by \eqref{def:badsetforGamma}, is a proper subset of $\F^2$.
    Note that if $\Delta$ is non-elementary, then by \Cref{prop:badsetnull}, $\B(\Delta)$ is always a proper subset of $\F^2$.} torsion-free Anosov subgroup of $G$ such that there exists a point in $\F$ that is in general position with respect to each point in $\Lambda_\Delta$.
    Then, there exists an element $\tau\in G$ of infinite order such that:
    \begin{enumerate}
        \item  the natural homomorphism $\Delta \star \< \tau \> \to G$ is an injective Anosov representation, and
    
        \item for all $\epsilon >0$, there exists $m_0 \in \N$ such that
        \begin{equation}\label{eqn:crit_conv}
            \delta_\a(\Delta) \Hquad<\Hquad  \delta_\a(\<\Delta, \tau^m \>) \Hquad<\Hquad \delta_\a(\Delta) +\epsilon
        \end{equation}
        for all $m\ge m_0$ and all $\a\in\Pi$.\footnote{The proof of the upper inequality in \eqref{eqn:crit_conv} is similar to that of \Cref{prop:crit}.
        That the lower inequality in \eqref{eqn:crit_conv} is strict is proved in \cite[Cor. 4.2.]{CZZ-transverse}.}
        In particular, the sequence of critical exponents $\delta_\a(\<\Delta, \tau^m \>)$ converges to $\delta_\a(\Delta)$.
    \end{enumerate}
    }
\end{remark}

Before discussing the proof of \Cref{prop:crit}, let us introduce some notation for the remainder of this section: If $ j $ is a nonzero integer, then $ \varepsilon(j) \coloneqq 1 $ if $ j $ is positive, and $ \varepsilon(j) \coloneqq -1 $ if $ j $ is negative. 

The following lemma is crucial in the proof of \Cref{prop:crit}:

\begin{lemma}[Key lemma]\label{lem:wl}
There exists a constant $C>0$ such that the following holds: 
Let $w = (\beta^{j_l} \g_{l})(\beta^{j_{l-1}} \g_{l-1})\cdots (\beta^{j_{1}} \g_{1})$ be a word in the abstract free product $\Gamma_{n-1}\star \<\beta\>$,
 where $l\in\N$, $j_i\in\Z$, $j_i\ne 0$ for $i\le l-1$, $\g_i \in \G_{n-1}$, and $\g_i \ne 1$ for $i\ge 2$.
 Then, for all $\alpha \in \Pi$,
\begingroup
\makeatletter\def\f@size{10}\check@mathfonts
 \begin{align*}
 \sum_{i=1}^l \left(|j_i|\, \a(\mu(\beta^{\varepsilon(j_i)})) + \a(\mu(\g_i))\right) -Cl
 \LE
 \a(\mu(w))
 \LE \sum_{i=1}^l \left(|j_i|\, \a(\mu(\beta^{\varepsilon(j_i)})) + \a(\mu(\g_i))\right) +Cl.
\end{align*}
\endgroup
\end{lemma}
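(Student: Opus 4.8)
The plan is to control the Cartan projection of a ping-pong word in terms of the Cartan projections of its syllables, using the geometry of the ping-pong sets $B$ and $D$ established in \S\ref{sec:ping-pong}. The main tool will be the standard fact (a consequence of regularity / the contraction dynamics in \Cref{lem:contraction}, together with the transversality built into $B$ and $D$ being in general position, and the fact that $B$ lies in the attractive/repulsive balls around $x^\pm$) that concatenating two elements whose ``attracting'' and ``repelling'' data are uniformly transverse adds their Cartan projections up to a bounded error. Concretely, I would first record a local lemma: there exists $C_0 > 0$ such that if $g, h \in G$ are such that the repelling flag of $g$ and the attracting flag of $h$ are at distance $\ge \epsilon$ from each other (where $\epsilon$ is the constant governing $B, D$), then $\|\mu(gh) - \mu(g) - \mu(h)\| \le C_0$; this is essentially \cite[Lem. 3.8]{dey2023kleinmaskit} or the quantitative transversality estimates underlying the Anosov property, and $C_0$ depends only on $\epsilon$ and $G$. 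The syllables $\beta^{j_i}$ and $\g_i$ of the word $w$ have the required transversality by the ping-pong setup: $\beta^{\pm k} D \subset B^\circ$ and $\g B \subset D^\circ$ for $\g \ne 1$, and $B, D$ are pairwise antipodal; moreover $\beta \in A^+$ is regular, so the attracting/repelling flags of $\beta^{j_i}$ are exactly $x^{\varepsilon(j_i)}$, uniformly transverse to everything in $D$, and by \Cref{cor:precpt}\ref{cor:precpt:iii} the relevant flags of the $\g_i$ stay uniformly away from $\E_{x^-}$.

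Second, I would apply this local lemma inductively along the word $w = (\beta^{j_l}\g_l)\cdots(\beta^{j_1}\g_1)$, peeling off one syllable pair $(\beta^{j_i}\g_i)$ at a time. Each peeling introduces an error bounded by $C_0$, and there are $O(l)$ syllables, so after $2l$ or so steps we get $\|\mu(w) - \sum_i (\mu(\beta^{j_i}) + \mu(\g_i))\| \le C_1 l$ for a uniform $C_1$. One subtlety: I must verify that after multiplying a prefix, its attracting flag still lies in $B^\circ$ (resp. $D^\circ$) so that the next transversality hypothesis holds — this is exactly what the invariance relations \eqref{eqn:ping-pong1} and \eqref{eqn:ping-pong2} guarantee, since the attracting flag of $\beta^{j_i}\g_i\cdots$ is pushed into $B$ by $\beta^{j_i}$ and the repelling flag is controlled symmetrically; this is the classical ping-pong bookkeeping. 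Then I would pass from $\mu(\beta^{j_i})$ to $|j_i|\,\mu(\beta^{\varepsilon(j_i)})$: since $\beta$ is a \emph{regular} element of $A^+$, we have $\mu(\beta^{j}) = j\,\mu(\beta)$ exactly for $j > 0$ (it's genuinely in the positive chamber and powers stay there), and $\mu(\beta^{j}) = |j|\,\mu(\beta^{-1})$ for $j < 0$; thus $\a(\mu(\beta^{j_i})) = |j_i|\,\a(\mu(\beta^{\varepsilon(j_i)}))$ with no error at all. Applying $\a$ to the vector inequality and relabeling $C_1$ as $C$ gives the two-sided bound in the statement.

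The main obstacle I anticipate is making the local additivity lemma precise with a \emph{uniform} constant $C_0$ and, relatedly, checking that the inductive hypothesis (attracting flag of each prefix lands in the correct ping-pong region) really is maintained for \emph{all} admissible words, including the boundary cases where $j_l$ is allowed to vanish or $\g_1$ is allowed to be trivial — here one has to be a little careful about which end of the word is ``open'' and treat the extremal syllables by a direct triangle-inequality estimate ($\|\mu(g)\| \le \|\mu(gh)\| + \|\mu(h)\|$ type bounds absorb at most another constant). A secondary point is that \cite{dey2023kleinmaskit,KLP_char} phrase regularity and contraction in terms of convergence of sequences rather than a single quantitative inequality, so I would either extract the uniform estimate by a compactness/contradiction argument (if no uniform $C_0$ worked, take a sequence of bad pairs, extract limits using compactness of $\F$ and the fixed $\epsilon$-transversality, and derive a contradiction with continuity of $\mu$ on the relevant Bruhat cells) or cite the quantitative form directly from the Anosov-combination literature. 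Once the uniform $C_0$ is in hand, the rest is routine bookkeeping.
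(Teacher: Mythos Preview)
Your approach is viable but takes a genuinely different route from the paper. The paper does \emph{not} invoke an abstract Cartan-additivity lemma of the form $\|\mu(gh)-\mu(g)-\mu(h)\|\le C_0$; instead it works through the Tits representations $\rho_\varphi:G\to\mathrm{GL}(V_\varphi)$. The local step (\Cref{lem:est1}) is an operator-norm inequality: for $[v]\in B_\varphi$ and nontrivial $\gamma$, one has $\|\rho_\varphi(\beta^j\gamma)v\|\ge C\,\|\rho_\varphi(\beta^{\varepsilon(j)})\|^{|j|}\,\|\rho_\varphi(\gamma)\|\,\|v\|$, where your transversality hypothesis becomes the concrete statement that $\rho_\varphi(\gamma)[v]$ and $\rho_\varphi(k_2(\gamma))[v]$ stay uniformly away from $\PP(V_\varphi^<)$, via \eqref{eqn:ping-pong3} and \eqref{eqn:kgboundedaway}. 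Iterating this yields the bound for the highest weights $\rchi_\varphi(\mu(w))$ (\Cref{lem:est2}); the inequalities for simple roots $\alpha$ then follow by writing $\alpha=\sum_\varphi\lambda_\alpha^\varphi\rchi_\varphi$ and combining the upper and lower bounds on each $\rchi_\varphi$. Your route is more direct \emph{if} the additivity lemma is taken as a black box, and it avoids this basis-change step; but the standard proof of that lemma (e.g.\ in \cite{Benoist}) is exactly the representation-theoretic computation the paper carries out, so the two arguments have the same content. One point you would need to sharpen: the hypothesis of the additivity lemma is that $k_2(g)k_1(h)$ lies in a fixed compact of the big Bruhat cell, so iterating it requires controlling $k_1$ of each growing prefix of $w$, not just the dynamics of the prefix on $\F$; the paper sidesteps this by tracking a single vector $v_q$ through the representation, whose projective class stays in $B_\varphi$ by \eqref{eqn:ping-pong3} on the nose.
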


In \S\ref{prelim:proof:lem:wl}, we establish some preliminary results before presenting the proof of \Cref{lem:wl} in \S\ref{proof:lem:wl}. After that, we prove \Cref{prop:crit} in \S\ref{proof:prop:crit}.

\subsubsection{Tits representations}\label{prelim:proof:lem:wl}
By a theorem of Tits \cite{Tits} (see also \cite[Sect. 6.8 \& 6.9]{Benoist-Quint-Book}), for each $\alpha\in\Pi$, there exists a finite-dimensional irreducible representation 
$ \rho_\a : G \to {\rm GL}(V_\a)$
whose highest $\R$-weight $\rchi_\a$ is an integral multiple of the fundamental weight $\varpi_\a$ associated to $\a$,
and the corresponding highest weight space is one-dimensional.
In particular, $\{\rchi_\a :\ \a\in\Pi\}$ forms a basis of the dual space $\liea^*$.
We fix such a representation $(\rho_\a, V_\a)$ of $G$ for each $\alpha\in\Pi$.

Now, we fix $\a\in\Pi$.
Let $V_\a^+ \subset V_\a$ be the one-dimensional highest weight space of $(\rho_\a, V_\a)$ and let $V_\a^{<}$ be the unique complementary $\rho_\a(A)$-invariant subspace of $V_\a$.
We equip $V_\a$ with a 
{\em good norm}\footnote{See \cite[Lem. 6.33]{Benoist-Quint-Book}.} $\|\cdot\|$, i.e., a $\rho_\a(K)$-invariant Euclidean norm in $V_\a$ such that for all $a\in A$, $\rho_\a(a)$ is a symmetric endomorphism.
Under this norm, $V^<_\a$ is the orthogonal complement of $V_\a^+$.

Consider the map $G \to \PP(V_\a)$, $g \mapsto \rho_\a(g) V_\a^+$.
The $G$-stabilizer of $V_\alpha^+$ is the parabolic subgroup $P_\alpha$.
Thus, the map $G \to \PP(V_\a)$ factors through a $\rho_\a$-equivariant smooth embedding of the flag variety $\F_\a \coloneqq G/P_\alpha$ into the projective space
$\PP(V_\a)$ given by
$
 \iota_\a : \F_\a \to \PP(V_\a),\ [P_\alpha]\mapsto V_\a^+.
$
Moreover, the $G$-stabilizer of $V^<_\a$ is $P_\a^-$, the maximal parabolic subgroup of $G$ opposite to $P_\a$. See \S\ref{sec:notation} for notation.

\begin{lemma}\label{lem:opnorm0}
    For every $g\in G$, the operator norm of $\rho_\a (g) : V_\a\to V_\a$ is given by $\| \rho_\a (g)\| = \exp \left( \rchi_\a (\mu (g)) \right)$.
\end{lemma}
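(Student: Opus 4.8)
\textbf{Plan of proof for Lemma \ref{lem:opnorm0}.} The plan is to use the good norm to diagonalize $\rho_\a(a)$ for $a\in A^+$ and then propagate this to arbitrary $g\in G$ via the Cartan decomposition $G=KA^+K$.

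\emph{Step 1: the split case.} Fix $a = \exp(H)\in A^+$, $H\in\liea^+$. Since $\|\cdot\|$ is a good norm, $\rho_\a(a)$ is a symmetric (hence diagonalizable, with orthogonal eigenspaces) endomorphism of $V_\a$, and its eigenvalues are precisely $\exp(\lambda(H))$ as $\lambda$ ranges over the restricted weights of $(\rho_\a,V_\a)$ with respect to $\liea$. The operator norm of a symmetric operator equals the largest absolute value of an eigenvalue, so $\|\rho_\a(a)\| = \max_\lambda \exp(\lambda(H))$. I would then invoke the fact that $\rchi_\a$ is the highest weight: every weight $\lambda$ of $V_\a$ is of the form $\rchi_\a - \sum_{\beta\in\Pi} n_\beta\beta$ with $n_\beta\ge 0$ integers, so $\rchi_\a(H)-\lambda(H) = \sum n_\beta\beta(H)\ge 0$ for $H\in\liea^+$. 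Hence the maximum is attained at $\lambda = \rchi_\a$, giving $\|\rho_\a(a)\| = \exp(\rchi_\a(H)) = \exp(\rchi_\a(\mu(a)))$, using $\mu(a)=H$.

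\emph{Step 2: the general case.} Write $g = k_1 \exp(\mu(g)) k_2$ with $k_1,k_2\in K$ and $\mu(g)\in\liea^+$. Since the good norm is $\rho_\a(K)$-invariant, $\rho_\a(k_1)$ and $\rho_\a(k_2)$ are orthogonal transformations of $V_\a$, hence isometries, so
\[
 \|\rho_\a(g)\| = \|\rho_\a(k_1)\,\rho_\a(\exp\mu(g))\,\rho_\a(k_2)\| = \|\rho_\a(\exp\mu(g))\|.
\]
By Step 1 applied to $a=\exp(\mu(g))\in A^+$, this equals $\exp(\rchi_\a(\mu(g)))$, which is the claim.

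\emph{Main obstacle.} There is no serious obstacle; the only point requiring a little care is the identification of the eigenvalues of the symmetric operator $\rho_\a(a)$ with $\exp(\lambda(H))$ over the weights $\lambda$, and the observation that for $H\in\liea^+$ the highest weight $\rchi_\a$ dominates all other weights (which is exactly the defining property of the dominance order, together with $\beta(H)\ge0$ for $\beta\in\Pi$, $H\in\liea^+$). Both are standard facts about highest-weight representations and the structure of good norms as recorded in \cite[Lem. 6.33]{Benoist-Quint-Book}, so the proof is short.
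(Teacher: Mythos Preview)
Your proof is correct; the paper itself does not give an argument but simply refers to \cite[Lem.~6.33]{Benoist-Quint-Book}, and your two-step proof (diagonalize $\rho_\a(a)$ for $a\in A^+$ via the good norm and use that the highest weight dominates on $\liea^+$, then reduce general $g$ to $A^+$ via $K$-invariance of the norm) is exactly the standard argument behind that reference.
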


See \cite[Lem. 6.33]{Benoist-Quint-Book} for a proof of this lemma.

\begin{lemma}\label{lem:opnorm}
 For all $\epsilon>0$, there exists $C>0$ such that the following holds:
 For $g \in G$,
 write $g = k_1(g)a(g)k_2(g)$, where $a(g)\in A^+$ and $k_1(g),k_2(g) \in K$.
 If $v\in V_\a$ is a nonzero vector such that $\angle(\rho_\a(k_2(g)) v,  V_\a^<) >\epsilon$,
 then $\| \rho_\a (g) v\| \ge C \, \| \rho_\a (g)\|\, \|v\|.$
\end{lemma}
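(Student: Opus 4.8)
The plan is to reduce everything to the Cartan decomposition and the good norm. Write $g = k_1(g)a(g)k_2(g)$ with $a(g) \in A^+$ and $k_i(g) \in K$, and set $a = a(g)$, $k = k_2(g)$. Since $\rho_\a(k_1(g))$ is an isometry of $(V_\a, \|\cdot\|)$, we have $\|\rho_\a(g)v\| = \|\rho_\a(a)\rho_\a(k)v\|$, and similarly $\|\rho_\a(g)\| = \|\rho_\a(a)\|$ by $K$-invariance of the norm; by \Cref{lem:opnorm0} this operator norm equals $\exp(\rchi_\a(\mu(g))) = \exp(\rchi_\a(\log a))$, which is the largest eigenvalue of the symmetric positive operator $\rho_\a(a)$ (the eigenvalue on the highest weight line $V_\a^+$). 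So it suffices to bound $\|\rho_\a(a)w\|$ from below, where $w \coloneqq \rho_\a(k)v$, given that $\angle(w, V_\a^<) > \epsilon$.

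First I would decompose $w = w^+ + w^<$ orthogonally with respect to the good norm, where $w^+ \in V_\a^+$ and $w^< \in V_\a^<$; this uses the fact (from \S\ref{prelim:proof:lem:wl}) that $V_\a^<$ is the orthogonal complement of $V_\a^+$ under $\|\cdot\|$. The angle hypothesis $\angle(w, V_\a^<) > \epsilon$ translates into $\|w^+\| \ge (\sin\epsilon)\,\|w\|$ — the component of $w$ off the hyperplane $V_\a^<$ is a definite fraction of $\|w\|$. Since $\rho_\a(a)$ is symmetric and preserves both $V_\a^+$ and $V_\a^<$, these pieces do not interact: $\|\rho_\a(a)w\|^2 = \|\rho_\a(a)w^+\|^2 + \|\rho_\a(a)w^<\|^2 \ge \|\rho_\a(a)w^+\|^2$. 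On the one-dimensional space $V_\a^+$, $\rho_\a(a)$ acts by the scalar $\exp(\rchi_\a(\log a)) = \|\rho_\a(a)\| = \|\rho_\a(g)\|$, so $\|\rho_\a(a)w^+\| = \|\rho_\a(g)\|\,\|w^+\| \ge (\sin\epsilon)\,\|\rho_\a(g)\|\,\|w\|$. Finally, $\|w\| = \|\rho_\a(k)v\| = \|v\|$ since $\rho_\a(k)$ is an isometry, giving $\|\rho_\a(g)v\| \ge (\sin\epsilon)\,\|\rho_\a(g)\|\,\|v\|$. Taking $C = \sin\epsilon$ (or any constant $\le \sin\epsilon$) completes the proof.

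There is no real obstacle here — the lemma is essentially a linear-algebra consequence of the structure set up in \S\ref{prelim:proof:lem:wl}, namely that the good norm makes $\rho_\a(a)$ symmetric with the highest weight line as top eigenspace and $V_\a^<$ as its orthogonal complement. The only points requiring mild care are: (i) making sure the constant $C$ depends only on $\epsilon$ and not on $g$ (it does, since the only input is the angle bound and the identities above are uniform in $a$ and $k$), and (ii) the bookkeeping of the angle-versus-norm conversion, i.e. that $\angle(w, V_\a^<) > \epsilon$ with $V_\a^<$ a linear subspace gives $\|\mathrm{proj}_{V_\a^+}(w)\| \ge (\sin\epsilon)\|w\|$, which follows from the definition of the angle between a vector and a subspace as the infimum of angles to vectors in that subspace. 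I would state the constant explicitly as $C = \sin\epsilon$ to keep the argument self-contained.
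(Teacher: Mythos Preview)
Your proof is correct and follows essentially the same approach as the paper's: reduce to $a\in A^+$ via the $K$-invariance of the good norm, split $w=\rho_\a(k_2(g))v$ orthogonally as $w^+ + w^<$, use the angle hypothesis to get $\|w^+\|\ge C\|w\|$, and then use that $\rho_\a(a)$ preserves the orthogonal splitting and acts on $V_\a^+$ by the top eigenvalue $\|\rho_\a(a)\|$. Your write-up is in fact a bit more explicit than the paper's (you justify the orthogonality of $\rho_\a(a)w^+$ and $\rho_\a(a)w^<$ and give the constant $C=\sin\epsilon$), but the idea is identical.
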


\begin{proof}
 For $w\in V_\a$, let $w^+$ (resp. $w^<$) denote the the orthogonal projection of $w$ into $V^+_\a$ (resp. $V_\a^<$).
 Then, there exists a constant $C>0$ depending only on $\epsilon>0$ such that for all nonzero $w\in V$ with $\angle(w,V_\a^<) > \epsilon$, we have that $\|w^+\| \ge C\, \|w\|$.
 Thus, for all $a\in A^+$, we get
 $
  \|\rho_\a(a) w\| = \|\rho_\a(a)(w^++w^<)\| \ge \|\rho_\a(a)(w^+)\|
  = \|\rho_\a(a)\| \, \|w^+\| \ge C\, \|w\|.
 $
 The lemma follows from this.
\end{proof}

Let $\pi_{\a} : \F \to \F_{\a}$ be the $G$-equivariant projection. The composition $\iota_{\a} \circ \pi_{\a} : \F \to \PP(V_{\a})$ maps $\C(x^{-})$ into $\PP(V_{\a}) \smallsetminus \PP(V_{\a}^<)$. This can be seen as follows: The maximal parabolic subgroup $P_{\a}^-$ opposite to $P_{\a}$ acts transitively on $\pi_{\a}(\C(x^-)) \subset \F_{\a}$. Therefore, $\iota_{\a} (\pi_{\a}(C(x^-))) = \rho_{\a}(P_{\a}^-) \cdot \iota_{\a}(\pi_{\a}(x^+)) = \rho_{\a}(P_{\a}^-) \cdot V_{\a}^+$. Since $V_{\a}^<$ is preserved by $\rho_{\a}(P_{\a}^-)$, we have
$
\iota_{\a}(\pi_{\a}(C(x^-))) \cap \PP(V_{\a}^<)
= (\rho_{\a}(P_{\a}^-) \cdot V_{\a}^+) \cap \PP(V_{\a}^<) 
= \rho_{\a}(P_{\a}^-) \cdot (V_{\a}^+ \cap \PP(V_{\a}^<))
= \emptyset.
$
So, $\iota_{\a} ( \pi_{\a} (\C(x^-)) ) \subset \PP(V_{\a}) \smallsetminus \PP(V_{\a}^<)$.

Recall the compact sets $B, D, Z \subset \F$ defined in \Cref{sec:ping-pong}. 
Let 
$B_{\a} \coloneqq \iota_{\a} ( \pi_{\a} (B))$,
$D_{\a} \coloneqq \iota_{\a} ( \pi_{\a} (D))$,
and
$Z_{\a} \coloneqq \iota_{\a} ( \pi_{\a} (Z))$.
By \eqref{eqn:ping-pong1} and \eqref{eqn:ping-pong2}, we obtain that for all nontrivial $\g \in \G_{n-1}$ and all nonzero $j\in\Z$,
\begin{equation}\label{eqn:ping-pong3}
    \rho_{\a}(\g)B_{\a} \subset D_{\a}
    \quad\text{ and }\quad
    \rho_\a(\beta)^{j} D_\a \subset B_\a.
\end{equation}
Moreover, there exists a finite subset $S \subset \G$ such that $\rho_{\a}(k_2(\g)) B_{\a} \subset Z_{\a}$ for all $\g \in \G \smallsetminus S$; cf. \Cref{sec:ping-pong}. 
Since $Z$ is precompact in $\C_{x^-}$, by the preceding paragraph, $Z_{\a}$ is a precompact subset of $\PP(V_{\a}) \smallsetminus \PP(V_{\a}^<)$.
So, there exists $\epsilon_1 > 0$ such that
\begin{equation}\label{eqn:kgboundedaway}
    \angle(\rho_{\a}(k_2(\g)) v, \, V_{\a}^<) \geq \epsilon_1 \qquad 
    \begin{array}{c}
        \text{for all } \g \in \G \smallsetminus S \text{ and for all}\\
    \text{nonzero } v\in V_\a \text{ such that } [v] \in B_{\a}.
    \end{array}
\end{equation}
We use these facts in the proof of the following:

\begin{lemma}\label{lem:est1}
 There exists a constant $C >0$ such that for all 
 nontrivial elements $\g\in\G_{n-1}$, all $j\in\N$, and all nonzero $v\in V_\a$ with $[v]\in B_\a$,
\begin{align*}
 \| \rho_\alpha(\beta^j\g) v \| &\GE C\, \| \rho_\a (\beta)\|^j\,  \| \rho_\a (\g)\| \, \|v\|\\
 \| \rho_\alpha(\beta^{-j}\g) v \| &\GE C\, \| \rho_\a (\beta^{-1})\|^j \,  \| \rho_\a (\g)\| \, \|v\|.
\end{align*}
\end{lemma}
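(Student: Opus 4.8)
The plan is to split the word $\beta^{\pm j}\g$ as $\beta^{\pm j}\cdot\g$ and to estimate the two factors separately, using only Lemmas~\ref{lem:opnorm0} and~\ref{lem:opnorm} as black boxes, together with the ping-pong inclusions \eqref{eqn:ping-pong3} and the angle bound \eqref{eqn:kgboundedaway}. Concretely I would establish: (i) there is $C_1>0$ such that for every nontrivial $\g\in\G_{n-1}$ and every nonzero $v$ with $[v]\in B_\a$ one has $\|\rho_\a(\g)v\|\GE C_1\,\|\rho_\a(\g)\|\,\|v\|$, and moreover $[\rho_\a(\g)v]\in D_\a$; and (ii) there is $C_2>0$ such that for every nonzero $w$ with $[w]\in D_\a$ and every $j\in\N$ one has $\|\rho_\a(\beta^{\pm j})w\|\GE C_2\,\|\rho_\a(\beta^{\pm1})\|^{j}\,\|w\|$. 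Setting $w=\rho_\a(\g)v$ (so that $[w]\in\rho_\a(\g)B_\a\subset D_\a$ by \eqref{eqn:ping-pong3}) and $C=C_1C_2$, the two estimates chain to the asserted inequalities, since $\|\rho_\a(\beta^{\pm j})\|=\|\rho_\a(\beta^{\pm1})\|^{j}$ (checked below).

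For (i): if $\g\notin S$, then by \eqref{eqn:kgboundedaway} the vector $\rho_\a(k_2(\g))v$ makes angle at least $\epsilon_1$ with $V_\a^<$, so Lemma~\ref{lem:opnorm} (applied with $\epsilon=\epsilon_1/2$) gives the bound with a constant depending only on $\epsilon_1$. For the finitely many $\g\in S\setminus\{1\}$ I would argue by compactness: $\rho_\a(\g)$ is invertible and the set $\{v\in V_\a:[v]\in B_\a,\ \|v\|=1\}$ is compact, so $\|\rho_\a(\g)v\|$ is bounded below on it by some $c_\g>0$; dividing by the finite number $\|\rho_\a(\g)\|$ and taking the minimum of $c_\g/\|\rho_\a(\g)\|$ over $S\setminus\{1\}$, together with the constant from the first case, produces a single $C_1>0$. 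The inclusion $[\rho_\a(\g)v]\in\rho_\a(\g)B_\a\subset D_\a$ is immediate from \eqref{eqn:ping-pong3}.

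For (ii): first I would record the two norm identities. Since $\beta=\exp X$ with $X\in\mathfrak{a}^+$ regular, $\beta^{j}\in A^+$ and $\mu(\beta^{j})=jX$; writing $-X=\op{Ad}(w_0)X'$ with $X'\in\mathfrak{a}^+$ (possible because $w_0$ carries $\mathfrak{a}^+$ onto $-\mathfrak{a}^+$) gives $\beta^{-j}=w_0\exp(jX')w_0^{-1}$, so $\mu(\beta^{-j})=jX'$; hence Lemma~\ref{lem:opnorm0} yields $\|\rho_\a(\beta^{j})\|=\|\rho_\a(\beta)\|^{j}$ and $\|\rho_\a(\beta^{-j})\|=\|\rho_\a(\beta^{-1})\|^{j}$. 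These presentations also let one take $k_2(\beta^{j})=e$ and $k_2(\beta^{-j})=w_0^{-1}$, both \emph{independent of $j$}. By the discussion preceding this lemma, $\iota_\a\circ\pi_\a$ carries $\C_{x^-}$ into $\PP(V_\a)\setminus\PP(V_\a^<)$; since $D\subset\C_{x^-}$ is compact, so is $D_\a\subset\PP(V_\a)\setminus\PP(V_\a^<)$, whence $\angle(w,V_\a^<)$ is bounded below on $D_\a$, and Lemma~\ref{lem:opnorm} applied to $g=\beta^{j}$ (for which $\rho_\a(k_2(\beta^j))w=w$) yields the $+$ case. For the $-$ case I would use $\C_{x^+}=w_0\C_{x^-}$ (because $w_0x^-=x^+$ and $\C_{gx}=g\C_x$): then $D\subset\C_{x^+}$ forces $w_0^{-1}D\subset\C_{x^-}$, so by equivariance $\rho_\a(w_0^{-1})D_\a=\iota_\a(\pi_\a(w_0^{-1}D))$ is again a compact subset of $\PP(V_\a)\setminus\PP(V_\a^<)$; thus $\angle(\rho_\a(w_0^{-1})w,V_\a^<)$ is bounded below on $D_\a$, and Lemma~\ref{lem:opnorm} applied to $g=\beta^{-j}$ gives the $-$ case.

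The step I expect to be the main obstacle is the $-$ case of (ii). One must notice that the hypothesis actually needed there is the ``$x^+$-half'' of the general-position property built into $D$ in \S\ref{sec:ping-pong}, namely $D\subset\C_{x^+}$, and then exploit the $w_0$-equivariance of $\iota_\a\circ\pi_\a$ to recycle the already-established statement about $\C_{x^-}$ instead of redoing the Schubert-cell analysis around $x^+$. It is equally important that $k_2(\beta^{-j})$ can be chosen independent of $j$, as this is what makes the clean power $\|\rho_\a(\beta^{-1})\|^{j}$ emerge from Lemma~\ref{lem:opnorm} rather than a $j$-dependent loss.
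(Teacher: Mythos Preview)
Your argument is correct and follows essentially the same two-step scheme as the paper's own proof: first bound $\|\rho_\a(\g)v\|$ from below via \eqref{eqn:kgboundedaway} and \Cref{lem:opnorm} for $\g\notin S$, then handle the finitely many $\g\in S$ by invertibility/compactness; second, use that $\rho_\a(\g)[v]\in D_\a$ is bounded away from $\PP(V_\a^<)$ together with \Cref{lem:opnorm} applied to $g=\beta^{\pm j}$. The only notable difference is that the paper simply declares the $\beta^{-j}$ case ``similar'' and omits it, whereas you spell it out via the $w_0$-conjugation $\beta^{-j}=w_0\exp(jX')w_0^{-1}$ and the inclusion $D\subset\C_{x^+}=w_0\C_{x^-}$---this is exactly the content the paper is suppressing, and your treatment of it is sound.
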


\begin{proof}
 Note that as $[v]\in B_\a$ and $\g\in\G_{n-1}$ is nontrivial, by \eqref{eqn:ping-pong3}, we get $\rho_\a(\g) [v]\in D_\a$.
 Since $D_\a$ is uniformly bounded away from $\PP(V_\a^<)$, there exists a constant $\epsilon_0>0$ such that
$\angle(\rho_\alpha(\g) v, V_\alpha^< ) > \epsilon_0$ for all nontrivial $\g\in\G$.
 So, by \Cref{lem:opnorm}, there exists a constant $C_0 = C_0(\epsilon_0)>0$ such that
 $
  \| \rho_\alpha(\beta^j\g) v \| 
  = \|\rho_\alpha(\beta^j)(\rho_\alpha(\g) v) \| 
  \ge C_0 \, \| \rho_\a (\beta)\|^j\, \|\rho_\alpha(\g) v\|
 $
 for all $j\in \N$.
 Moreover, since $[v]\in B_\a$, by \eqref{eqn:kgboundedaway}, we also have that $\angle(\rho_{\a}(k_2(\g)) v, \, V_{\a}^<) \ge \epsilon_1$ for all $\g\in\G\smallsetminus S$.
 So, by \Cref{lem:opnorm}, there exists a constant $C_1 = C_1(\epsilon_1)>0$ such that for all $\g\in\G_{n-1}\smallsetminus S$,
 $
  \|\rho_\alpha(\g) v\| \ge C_1\, \| \rho_\a(\g)\|\, \|v\|.
 $
  Furthermore, let 
  \[
   C_2 \coloneqq \min \left\{ 
   \frac{\|\rho_\alpha(\g) w\|}{\| \rho_\a(\g)\|\, \|w\|} :\
   \g\in S, w\in V_\a, w\ne 0
   \right\} >0.
  \]
  So, for all $\g \in \G$, we have
  $
   \|\rho_\alpha(\g) v\| \ge C_3\,  \| \rho_\a(\g)\|\, \|v\|,
  $
  where $C_3 \coloneqq \min\{C_1,C_2\} >0$.
  Thus, we get  
  $
  \|\rho_\alpha(\g) v\| \ge C_0C_3 \, \| \rho_\a (\beta)\|^j\, \| \rho_\a(\g)\|\, \|v\|,
  $
  which finishes the proof of the first inequality with $C\coloneqq C_0C_3$.

  The proof of the second inequality is similar, so we omit the details.
  \end{proof}

Recall our convention that if $ j $ is a nonzero integer, then $ \varepsilon(j) = 1 $ if $ j $ is positive, and $ \varepsilon(j) = -1 $ if $ j $ is negative.

\begin{lemma}\label{lem:est2}
 There exists a constant $C_\a \ge 0$ such that 
 for all $w \in \Gamma_{n-1}\star \<\beta\>$ of the form given by  \Cref{lem:wl},
 we have that
\begingroup
\makeatletter\def\f@size{10}\check@mathfonts
\begin{align}\label{eqn:est2}
 \sum_{i=1}^l \left(|j_i|\, \rchi_\a(\mu(\beta^{\varepsilon(j_i)})) + \rchi_\a(\mu(\g_i))\right) -C_\a l
 \LE \rchi_\a(\mu(w))
 \LE \sum_{i=1}^l \left(|j_i|\, \rchi_\a(\mu(\beta^{\varepsilon(j_i)})) + \rchi_\a(\mu(\g_i))\right).
\end{align}
\endgroup
\end{lemma}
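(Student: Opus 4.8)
The plan is to estimate $\|\rho_\a(w)\|$ from above and below using the operator-norm identity $\|\rho_\a(g)\| = \exp(\rchi_\a(\mu(g)))$ from \Cref{lem:opnorm0}, and then take logarithms. For the upper bound, I would simply use submultiplicativity of the operator norm: writing $w = (\beta^{j_l}\g_l)\cdots(\beta^{j_1}\g_1)$ and applying $\|\rho_\a(gh)\| \le \|\rho_\a(g)\|\,\|\rho_\a(h)\|$ repeatedly gives
\[
\|\rho_\a(w)\| \LE \prod_{i=1}^l \|\rho_\a(\beta^{j_i})\|\,\|\rho_\a(\g_i)\|.
\]
Since $\|\rho_\a(\beta^{j_i})\| = \|\rho_\a(\beta^{\varepsilon(j_i)})\|^{|j_i|}$ (as $\rho_\a(\beta)$ and its inverse are symmetric positive, so powers of operator norms multiply), taking logarithms yields exactly the right-hand inequality of \eqref{eqn:est2}, with no error term. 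Here one must be slightly careful with the edge case $j_l = 0$ (allowed by \Cref{lem:wl}), but then that factor is trivial and the bound still holds.

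For the lower bound, I would apply a vector $v$ with $[v] \in B_\a$ and track how the norm grows as we read the word from right to left. The key point is the ping-pong structure recorded in \eqref{eqn:ping-pong3}: since $[v] \in B_\a$, after applying $\rho_\a(\g_1)$ (nontrivial for $i \ge 2$, but the $i=1$ block needs separate handling since $\g_1$ may be trivial) we land in $D_\a$, after $\rho_\a(\beta^{j_1})$ we are back in $B_\a$, and so on — each intermediate projective point stays in $B_\a \cup D_\a$, a compact set uniformly bounded away from $\PP(V_\a^<)$. This is precisely the hypothesis needed to invoke \Cref{lem:est1}: each block $\beta^{j_i}\g_i$ (for $i \ge 2$, where $\g_i \ne 1$) multiplies the norm by at least $C\,\|\rho_\a(\beta^{\varepsilon(j_i)})\|^{|j_i|}\,\|\rho_\a(\g_i)\|$. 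Iterating over the $l$ blocks, with the angle condition \eqref{eqn:kgboundedaway} guaranteeing the relevant $k_2(\g_i)$-images stay controlled, gives
\[
\|\rho_\a(w)v\| \GE C^l \prod_{i=1}^l \|\rho_\a(\beta^{\varepsilon(j_i)})\|^{|j_i|}\,\|\rho_\a(\g_i)\|\,\|v\|.
\]
Since $\|\rho_\a(w)v\| \le \|\rho_\a(w)\|\,\|v\|$, taking logarithms and setting $C_\a \coloneqq -\log C \ge 0$ (choosing $C \le 1$ without loss of generality) produces the left-hand inequality of \eqref{eqn:est2}.

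The main obstacle — really a bookkeeping subtlety rather than a deep difficulty — is the treatment of the first and last blocks, since \Cref{lem:wl} permits $j_l$ to vanish and $\g_1$ to be trivial, so the clean alternating pattern $B_\a \to D_\a \to B_\a \to \cdots$ used by \Cref{lem:est1} does not literally apply to the extreme blocks. I would handle this by absorbing the finitely many exceptional configurations at the two ends into the constant $C_\a$: for the first block, if $\g_1$ is trivial then $\rho_\a(\beta^{j_1})[v] \in B_\a$ directly by \eqref{eqn:ping-pong3} after noting $[v] \in B_\a \subset D_\a$-adjacent region (or just bound that single block's distortion by a fixed constant); similarly the last block contributes at worst a bounded multiplicative factor. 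Since these corrections affect only $O(1)$ blocks, they change the additive constant but not its $l$-linear coefficient, so the stated form of \eqref{eqn:est2} survives. Everything else is a routine induction on $l$ using \Cref{lem:est1,lem:opnorm0,lem:opnorm} together with the ping-pong inclusions from \Cref{sec:ping-pong}.
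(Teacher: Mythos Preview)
Your proposal is correct and follows essentially the same route as the paper: submultiplicativity of the operator norm for the upper inequality, and an induction applying \Cref{lem:est1} to a vector $v$ with $[v]\in B_\a$ (using the ping-pong inclusions \eqref{eqn:ping-pong3} to keep the intermediate projective images in $B_\a$) for the lower inequality, followed by $\|\rho_\a(w)v\|\le\|\rho_\a(w)\|\,\|v\|$ and logarithms with $C_\a=\max\{0,-\log C\}$. The one place the paper is cleaner is the edge cases $\g_1=1$ or $j_l=0$: rather than arguing directly about where $\rho_\a(\beta^{j_1})[v]$ lands (your claim that this lies in $B_\a$ ``by \eqref{eqn:ping-pong3}'' is not quite what that inclusion says), the paper simply multiplies $w$ on the right by a fixed nontrivial $\g\in\G_{n-1}$ and on the left by $\beta$ to reduce to the nondegenerate case, at the cost of a bounded additive error absorbed into $C_\a$.
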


\begin{proof}
 We begin with the proof of the lower inequality:
 Let  $w\in \Gamma_{n-1}\star \<\beta\>$ be an element of the form 
$
   w  = (\beta^{j_l} \g_{l})(\beta^{j_{l-1}} \g_{l-1})\cdots (\beta^{j_{1}} \g_{1}),
$
 where $l\in\N$, $j_i\in\Z$, $j_i\ne 0$ for $i\le l-1$, $\g_i \in \G_{n-1}$, and $\g_i \ne 1$ for $i\ge 2$.
 For simplicity, we further assume that $j_l\ne 0$ and $\g_1 \ne 1$. (The general case would then follow from this special case after multiplying $w$ from the right (resp. left) by some fixed nontrivial element $\g\in \G_{n-1}$ (resp. $\beta$), if needed.)
 Let $v_0\in V_\a$ be a nonzero vector such that $[v_0]\in B_\a$.
 For each $q=1,\dots,l-1$,
 let
 $
  v_q \coloneqq (\beta^{j_{q}} \g_{q})\cdots (\beta^{j_{1}} \g_{1}) v_0.
 $
 It follows from \eqref{eqn:ping-pong3} that $[v_q] \in B_\a$.
 Thus, by \Cref{lem:est1}, we get
 $
  \| \rho_\a(\beta^{j_{q+1}}\g_{q+1})v_q \|
  \ge
   C \, \| \rho_\a (\beta^{\varepsilon(j_{q+1})})\|^{|j_{q+1}|}\,  \| \rho_\a (\g_{q+1})\| \, \| v_q\|,
 $
 for some $C>0$.
 By an induction on $q$, we get
 $
  C^l\,\| v_0\|\,   \left(\prod_{i=1}^l \| \rho_\a (\beta^{\varepsilon(j_{i})})\|^{|j_{i}|}\,  \| \rho_\a (\g_{i})\|\right)
  \le
  \| \rho_\a(w)v_0 \|
 $.
 Moreover, $\| \rho_\a(w)v_0 \| \le \| \rho_\a(w) \|\, \|v_0\|$. So,
 \[
  C^l\,   \prod_{i=1}^l \| \rho_\a (\beta^{\varepsilon(j_{i})})\|^{|j_{i}|}\,  \| \rho_\a (\g_{i})\| \LE \| \rho_\a(w)\|.
 \]
 \Cref{lem:opnorm0} states that for all $g\in G$, $\| \rho_\a (g)\| = \exp \left( \rchi_\a (\mu (g)) \right)$.
 Thus, the lower inequality in \eqref{eqn:est2} follows by taking the logarithm of the above inequality and by setting $C_\a \coloneqq  \max\{0,-\log(C)\}$.

 For the upper inequality in \eqref{eqn:est2}, notice that $ \| \rho_\a(gh) v \| \le \| \rho_\a(g)\|\, \|\rho_\a(h)\| \, \| v \| $ for all $ v\in V_\a $ and all $ g,h\in G $. 
 Thus, for any nonzero vector $ v\in V_\a $ satisfying $ \| \rho_\a(w) v \| = \| \rho_\a(w) \|\, \|v\| $, we have
$
     \| \rho_\a(w) \|\, \|v\| = \| \rho_\a(w) v \|
     \le \| v\| \left(\prod_{i=1}^l \| \rho_\a (\beta^{\varepsilon(j_{i})})\|^{|j_{i}|}\,  \| \rho_\a (\g_{i})\|\right).
$
So,
\[
\begin{aligned}
     \| \rho_\a(w) \| \LE \prod_{i=1}^l \| \rho_\a (\beta^{\varepsilon(j_{i})})\|^{|j_{i}|}\,  \| \rho_\a (\g_{i})\|.
\end{aligned}
\]
Taking the logarithm of the above and using the fact that $ \| \rho_\a (g)\| = \exp \left( \rchi_\a (\mu (g)) \right) $ for $ g\in G $ again, the upper inequality in \eqref{eqn:est2} follows.
\end{proof}

\subsubsection{Proof of \Cref{lem:wl}} \label{proof:lem:wl}

Since $ \{ \rchi_\varphi :\ \varphi \in \Pi \} $ spans $ \mf{a}^* $, we can write each $ \alpha \in \Pi $ as
\begin{equation}\label{eqn:lca}
    \alpha = \sum_{\varphi \in \Pi} \lambda_{\alpha}^{\varphi} \rchi_{\varphi}
\end{equation}
for some $ \lambda_{\alpha}^{\varphi} \in \R $. Moreover, by the upper and lower bounds obtained by \Cref{lem:est2}, we obtain that for all $ \lambda \in \R $, all $ \varphi \in \Pi $, and all $ w \in \Gamma_{n-1} \star \< \beta \> $ of the form given by \Cref{lem:wl},
\begin{equation*}
    \sum_{i=1}^l \left(|j_i|\, (\lambda \rchi_\varphi)(\mu(\beta^{\varepsilon(j_i)})) + (\lambda \rchi_\varphi)(\mu(\gamma_i))\right) -|\lambda| C_\varphi l \LE \lambda \rchi_\varphi(\mu(w)).
\end{equation*}
Therefore, by \eqref{eqn:lca}, we obtain the lower inequality in \Cref{lem:wl}:
\[
 \sum_{i=1}^l \left(|j_i|\, \alpha(\mu(\beta^{\varepsilon(j_i)})) + \alpha(\mu(\gamma_i))\right) -C l \LE \alpha(\mu(w))
\]
where $ C \coloneqq \max \{ \sum_{\varphi \in \Pi} |\lambda_{\alpha}^{\varphi}| C_\varphi :\ \alpha \in \Pi \} $. 

The proof of the upper inequality in \Cref{lem:wl} is similar; so we skip the details. \qed

\subsubsection{Proof of \Cref{prop:crit}}\label{proof:prop:crit}
 
 For each $N\in\N$, let
 \begin{equation}\label{eqn:mn:proof:prop:crit}
     m_N \coloneqq  \left\lfloor\frac{N}{\min\{ \alpha(\mu(\beta)),\, \alpha(\mu(\beta^{-1})) :\ \a\in \Pi \}}\right\rfloor.
 \end{equation}
 By definition, we have that for all $\alpha\in\Pi$,
 \begin{equation}\label{eq1:proof:prop:crit}
  N \le m_N\, \alpha(\mu(\beta)) 
  \quad\text{and}\quad
  N \le m_N\, \alpha(\mu(\beta^{-1})). 
 \end{equation}
 
 Now, we fix $\alpha\in\Pi$.
 Any element $w\in \G_{n-1}\star \<\beta^{m_N}\>$ has a unique {\em reduced form} given by
 \begin{equation}\label{eqn:reduced}
     w
     =(\beta^{m_Nj_l} \g_{l})(\beta^{m_Nj_{l-1}} \g_{l-1})\cdots (\beta^{m_Nj_{1}} \g_{1})
 \end{equation}
 where $l\in \N$, $j_i\in\Z$, $j_i\ne 0$ for $i\le l-1$, $\g_i \in \G_{n-1}$, and $\g_i \ne 1$ for $i\ge 2$.
 For $N, R\in\N$, let 
 \[S_{\a,N,R} \coloneqq \{w\in \G_{n-1}\star \<\beta^{m_N}\> :\ \a(\mu(w)) \le R \}.\]
 We will estimate the number of elements in $S_{\a,N,R}$.
 If $w \in S_{\a,N,R}$ is any word whose reduced form is given by \eqref{eqn:reduced}, then by \Cref{lem:wl}, we obtain
\begin{equation}\label{eqn:R}
   \sum_{i=1}^l \left(m_N |j_i|\, \a (\mu(\beta^{\varepsilon(j_i)})) + \a(\mu(\g_i))\right) -Cl \LE R,
\end{equation}
where $C>0$ is the constant given by \Cref{lem:wl}.
By \eqref{eq1:proof:prop:crit}, we get
$
 \sum_{i=1}^l N |j_i| - Cl \le R.
$
Since $j_i\ne 0$ for $i\ge 2$, we thus obtain $N(l-1) - Cl \le R$. So, if $N\ge C+1$, then
\begin{equation}\label{eqn:lbound}
  l \le \frac{N+R}{N-C}.
\end{equation}
 Therefore, by \eqref{eqn:R}, we get
\begin{equation}\label{eqn:lbound2}
 \sum_{i=1}^l m_N |j_i|\, \a (\mu(\beta^{\varepsilon(j_i)}))
 \le C_{N,R}
 \quad\text{and}\quad
 \sum_{i=1}^l \a(\mu(\g_i))
 \le C_{N,R},
\end{equation}
where
\begin{equation}\label{eq1:lem:finalestimate}
    C_{N,R} \coloneqq \left\lceil R+\frac{C(N+R)}{N-C}\right\rceil.
\end{equation}
Let
\begin{align*}
 S_{\a,N,R,l}' &\coloneqq \biggl\{ (\g_1,\dots,\g_l) \in (\G_{n-1})^l :\ \sum_{i=1}^l \a(\mu(\g_i)) \le C_{N,R} \biggr\},\\
 S_{\a,N,R,l}'' &\coloneqq \biggl\{ (j_1,\dots,j_l) \in \Z^l :\ \sum_{i=1}^l m_N|j_i| \,\a(\mu(\beta^{\varepsilon(j_i)})) \le C_{N,R} \biggr\}.
\end{align*}
It follows from \eqref{eqn:lbound} and \eqref{eqn:lbound2} that
\begin{equation}\label{eqn:SR}
  |S_{\a,N,R}| \LE
 \sum_{l=1}^{\left\lfloor\frac{N+ R}{N-C}\right\rfloor}  |S_{\a,N,R,l}'|\, | S_{\a,N,R,l}''|.
\end{equation}

\begin{lemma}\label{lem:finalestimate}
 For all $\epsilon\in(0,1)$, there exists an integer $N_\alpha \ge C+1$ such that  for all $N\ge N_\a$, all large $R$ (depending on $N$), and all $l\le (N + R)/(N-C)$,
\begin{align}
   |S_{\a,N,R,l}'| &\LE  \exp \bigl( R(\delta_\a(\G_{n-1}) +2\epsilon/3)\bigl), \label{eqn1:lem:finalestimate}\\
   |S_{\a,N,R,l}''| &\LE \exp \bigl( R\epsilon /3\bigl).\label{eqn2:lem:finalestimate}
\end{align}
\end{lemma}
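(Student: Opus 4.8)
The plan is to prove the two estimates \eqref{eqn1:lem:finalestimate} and \eqref{eqn2:lem:finalestimate} separately. In each case I will produce an upper bound that is exponential in $R$, with an explicit rate depending on $N,R,l$, and then let $N$ be large so that the rate drops below the prescribed threshold; the leftover terms, which do not depend on $R$, are then absorbed by taking $R$ large. This is precisely why the target exponents are written with an $\epsilon/3$ and a $2\epsilon/3$: one third of $\epsilon$ is spent making the $R$-rate correct as $N\to\infty$, and the remaining surplus swallows the $R$-independent error. Throughout I will use the simplification $R+\frac{C(N+R)}{N-C}=\frac{N(R+C)}{N-C}$, so that $C_{N,R}=\bigl\lceil \tfrac{N(R+C)}{N-C}\bigr\rceil$ by \eqref{eq1:lem:finalestimate}.

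For \eqref{eqn1:lem:finalestimate} I would run a Chebyshev/Poincar\'e-series argument. Since $\G_{n-1}$ is Anosov, $\{\g\in\G_{n-1}:\a(\mu(\g))\le T\}$ is finite for every $T$ (by \Cref{def:Anosov}), and the definition \eqref{eqn:crit} of the critical exponent implies, via a dyadic-shell summation, that the series $P(s)\coloneqq\sum_{\g\in\G_{n-1}}e^{-s\a(\mu(\g))}$ converges for every $s>\delta_\a(\G_{n-1})$; also $P(s)\ge 1$ because the identity contributes the term $1$. Fix $s\coloneqq\delta_\a(\G_{n-1})+\epsilon/3$. For any $(\g_1,\dots,\g_l)\in S'_{\a,N,R,l}$ we have $\sum_i\a(\mu(\g_i))\le C_{N,R}$, hence $e^{sC_{N,R}}e^{-s\sum_i\a(\mu(\g_i))}\ge 1$; summing this over $S'_{\a,N,R,l}$ and then over all of $(\G_{n-1})^l$ gives
\[
  |S'_{\a,N,R,l}|\ \le\ e^{sC_{N,R}}\,P(s)^l .
\]
Using $C_{N,R}\le \tfrac{N(R+C)}{N-C}+1$, $l\le\tfrac{N+R}{N-C}$, and $P(s)\ge 1$, taking logarithms yields $\log|S'_{\a,N,R,l}|\le \tfrac{sN+\log P(s)}{N-C}\,R + E(N)$, where $E(N)$ is independent of $R$. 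Since $\tfrac{sN+\log P(s)}{N-C}\to s=\delta_\a(\G_{n-1})+\epsilon/3$ as $N\to\infty$, there is $N_\a^{(1)}$ so that this coefficient is $\le \delta_\a(\G_{n-1})+\epsilon/2$ for all $N\ge N_\a^{(1)}$; then for $R$ large (depending on $N$) the term $E(N)$ is dominated by $(\epsilon/6)R$, giving \eqref{eqn1:lem:finalestimate}.

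For \eqref{eqn2:lem:finalestimate} I would argue combinatorially. As $\beta$ is regular, $c\coloneqq\min\{\a(\mu(\beta)),\a(\mu(\beta^{-1})):\a\in\Pi\}>0$, and by \eqref{eq1:proof:prop:crit} (equivalently, by the definition \eqref{eqn:mn:proof:prop:crit} of $m_N$) we have $m_N\,\a(\mu(\beta^{\varepsilon(j)}))\ge N$ for every nonzero integer $j$. Hence every $(j_1,\dots,j_l)\in S''_{\a,N,R,l}$ satisfies $N\sum_i|j_i|\le\sum_i m_N|j_i|\,\a(\mu(\beta^{\varepsilon(j_i)}))\le C_{N,R}$, i.e.\ $\sum_i|j_i|\le C_{N,R}/N$. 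The number of integer points in the $\ell^1$-ball of radius $M$ in $\R^l$ is at most $2^l\binom{\lfloor M\rfloor+l}{l}\le 2^{M+2l}$, so with $M=C_{N,R}/N$ and $l\le\tfrac{N+R}{N-C}$ we get $\log|S''_{\a,N,R,l}|\le (\log 2)\bigl(C_{N,R}/N+2(N+R)/(N-C)\bigr)$, whose $R$-coefficient equals $\tfrac{3\log 2}{N-C}\to 0$ as $N\to\infty$. Thus there is $N_\a^{(2)}$ with this coefficient $\le\epsilon/6$ for $N\ge N_\a^{(2)}$, and the bounded remainder is absorbed for $R$ large. Taking $N_\a\coloneqq\max\{C+1,\,N_\a^{(1)},\,N_\a^{(2)}\}$ finishes the proof. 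The step I expect to need the most care is the first one: extracting genuine convergence (with a usable bound) of $P(s)$ for all $s>\delta_\a(\G_{n-1})$ from the $\limsup$ definition \eqref{eqn:crit}, and, in both estimates, being scrupulous that the quantities labelled ``$R$-independent'' really are, so that they are dominated by an arbitrarily small linear-in-$R$ surplus once $N$ has been fixed.
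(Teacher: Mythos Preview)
Your argument is correct. For \eqref{eqn1:lem:finalestimate} you take a genuinely different route from the paper. The paper counts $S'_{\a,N,R,l}$ by stratifying according to integer compositions $(q_1,\dots,q_l)$ of $C_{N,R}$, uses the growth estimate $\#\{\g:\a(\mu(\g))\le q\}\le De^{(\delta_\a+\epsilon/2)q}$ on each factor, and then controls the resulting binomial coefficient $\binom{C_{N,R}+l-1}{l-1}D^l$ via $\binom{P}{Q}\le(eP/Q)^Q$ together with explicit asymptotics in $R$ and $N$. Your Poincar\'e-series/Chebyshev bound $|S'_{\a,N,R,l}|\le e^{sC_{N,R}}P(s)^l$ bypasses all of that combinatorics: once you observe $P(s)<\infty$ for $s>\delta_\a(\G_{n-1})$ and $P(s)\ge 1$, the exponent is immediately linear in both $C_{N,R}$ and $l$, and the rest is just reading off the $R$-coefficient. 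This is shorter and more transparent; the paper's approach has the minor advantage of not needing convergence of the full Poincar\'e series (only the pointwise growth bound), but that convergence is routine here. For \eqref{eqn2:lem:finalestimate} both proofs are essentially the same $\ell^1$-ball count; you use the cruder $\binom{n}{k}\le 2^n$ in place of the paper's $\binom{P}{Q}\le(eP/Q)^Q$, which is perfectly adequate since the target rate is $0$.
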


We will prove the above two inequalities separately. During the proofs, we may obtain two different values for $N_\a$, but taking the maximum of these two would satisfy the hypothesis of the lemma.

\begin{proof}[Proof of the first inequality \eqref{eqn1:lem:finalestimate} in \Cref{lem:finalestimate}]
Observe that the number of $l$-tuples of non-negative integers $(q_1, q_2, \dots, q_l)$  such that $\sum_{i=1}^l q_i = C_{N,R}$ is equal to $\binom{C_{N,R} + l -1}{l-1}$.
Moreover, given such a tuple $(q_1, q_2, \dots, q_l)$,
the set of all elements $(\gamma_1, \gamma_2, \dots, \gamma_l) \in (\G_{n-1})^l$ such that $\alpha(\mu(\gamma_i)) \leq q_i$ is at most $\prod_{i = 1}^l De^{(\delta_\a(\Gamma_{n-1}) + \epsilon/2) q_i}$, where $D \ge 1$ is some uniform constant depending only on $\epsilon$.
Therefore, we have
\begin{equation}\label{eqn:lem:finalestimate}
    |S_{\a,N,R,l}'|  \LE  {\binom{C_{N,R} + l -1 }{l -1}}D^l \exp\left({\left(\delta_\a(\G_{n-1}) +\frac{\epsilon}{2} \right) C_{N,R}}\right) .
\end{equation}
Since $\binom{C_{N,R} + l -1}{l-1} =\binom{C_{N,R} + l -1}{C_{N,R}} $ is strictly increasing with $l$ and $l\le \left\lfloor\frac{N+R}{N-C}\right\rfloor$, we have
\begin{align}\label{eqn2:ineq:lem:finalestimate}
    \binom{C_{N,R} + l -1 }{l-1}D^l
    &\LE \binom{C_{N,R} + \left\lfloor\frac{N+R}{N-C}\right\rfloor}{ \left\lfloor\frac{N+R}{N-C}\right\rfloor }D^{\frac{N+R}{N-C}} 
    \LE  \left(De\frac{C_{N,R} + \frac{N+R}{N-C}}{\frac{N+R}{N-C}-1 }\right)^{\frac{N+R}{N-C}}
\end{align}
where in the last inequality we have used the fact that $\binom{P}{Q} \le \left(\frac{eP}{Q}\right)^Q$.
A direct computation using \eqref{eq1:lem:finalestimate} shows
\begin{equation*}
    De\frac{C_{N,R} + \frac{N+R}{N-C}}{\frac{N+R}{N-C}-1 } \to De(N+1)
    \quad\text{as } R\to\infty.
\end{equation*}
Moreover,
\begin{equation*}
    \frac{1}{R} \frac{N+R}{N-C} \to \frac{1}{N-C}
    \quad\text{as } R\to\infty.
\end{equation*}
Therefore, for all $N$, there exists $R_N$ such that for all $R\ge R_N$, we have
\begin{equation*}
    De\frac{C_{N,R} + \frac{N+R}{N-C}}{\frac{N+R}{N-C}-1 } \LE De(N+2)
    \quad\text{and}\quad
    \frac{N+R}{N-C} \LE  \frac{R}{N-C} \left(1+\frac{\epsilon}{2024} \right).
\end{equation*}
Putting these bounds together,
by \eqref{eqn2:ineq:lem:finalestimate}, we have
\begin{align*}
    \binom{C_{N,R} + l -1}{l-1}D^l
    &\LE \left((De(N+2))^{\frac{1}{N-C}\left(1+\frac{\epsilon}{2024} \right)}\right)^{R}.
\end{align*}
Taking $N$ large enough we can make the quantity $\left((De(N+2))^{\frac{1}{N-C}\left(1+\frac{\epsilon}{2024} \right)}\right)$ as close to $1$ as we want; so we will take large enough $N_\a \in\N$, so that for all $N\ge N_\a$, we have
\begin{align}\label{eqn:eby7}
    \binom{C_{N,R} + l -1}{l-1}D^l
    &\LE e^{\frac{R\epsilon}{7}}.
\end{align}
for all $R\ge R_N$.
If needed, we may choose $N_\a$ even larger so that for all $N\ge N_\a$, we have
\begin{equation*}
    C_{N,R} \LE R+ \frac{R\epsilon}{2024}.
\end{equation*}
So, by \eqref{eqn:lem:finalestimate}, we get that for all $N\ge N_\a$ and $R \ge R_N$, 
\begin{align*}
    |S_{\a,N,R,l}'| &\LE \exp\left(\frac{R\epsilon}{7}+ \left(\delta_\a(\G_{n-1}) +\frac{\epsilon}{2} \right)C_{N,R}\right)\\
    &\LE \exp\left(\frac{R \epsilon}{7}+  \left(\delta_\a(\G_{n-1}) +\frac{\epsilon}{2} \right)\left(R+ \frac{R\epsilon}{2024}\right)\right)
    \LE \exp \left(R \delta_\a(\G_{n-1}) + \frac{2\epsilon R}{3}\right)
\end{align*}
where the last inequality follows by the fact that $\epsilon$ and $\delta_\a(\G_{n-1})$ are both less than $1$.
This proves the first inequality in \Cref{lem:finalestimate}.
\end{proof}

\begin{proof}[Proof of the second inequality \eqref{eqn2:lem:finalestimate} in \Cref{lem:finalestimate}]
Note that $S_{\a,N,R,l}''$ is contained in the ball of radius 
$ {C_{N,R}}/\left( m_N \min \{ \alpha(\mu(\beta)),\, \alpha(\mu(\beta^{-1})) \}\right)$
in $\R^l$ (with $L^1$-norm) centered at the origin.
Moreover, using the expression of $m_N$ \eqref{eqn:mn:proof:prop:crit}, we see that the quantity above does not exceed $C_{N,R}/N$.
Since the number of integer solutions $(j_1,\dots,j_l)$ to the equation $\sum_{i=1}^l |j_i| = K$ is at most $\binom{K + l -1}{l-1} 2^l $, we obtain
 \begin{equation*}
     |S_{\a,N,R,l}''| \LE  \sum_{i=0}^{\lfloor C_{N,R}/N \rfloor}\binom{i + l -1}{l-1} 2^l \LE  \left( \frac{C_{N,R}}{N} +1 \right)\binom{{\lfloor C_{N,R}/N \rfloor} + l -1}{l-1} 2^l.
 \end{equation*}
Imitating the proof of the inequality \eqref{eqn:eby7},
we get that there exists $N_\a\in\N$ such that for all $N\ge N_\a$ and all large $R$ (depending on $N$),
$
     |S_{\a,N,R,l}''| \LE e^{\frac{R\epsilon}{3}}.
$
\end{proof}

Let $N_0 \coloneqq \max\{ N_\a :\ \a\in\Pi\}$, where $N_\a\in\N$ is the constant given by \Cref{lem:finalestimate}.
Using \eqref{eqn:SR} and applying the \Cref{lem:finalestimate}, we thus obtain that for all large enough $R\in\N$ and all $\a\in\Pi$,
\begin{align}\label{eqn:crit:final}
\begin{split}
     |S_{\alpha,N_0,R}| \LE
 \sum_{l=1}^{\left\lfloor\frac{N_0+R}{N_0-C}\right\rfloor} 
  \exp \bigl( (R(\delta_\a(\G_{n-1}) +\epsilon)\bigl)
 \LE
 \left(\frac{N_0+R}{N_0-C}\right) \exp \bigl( R(\delta_\a(\G_{n-1}) +\epsilon)\bigl).
\end{split}
\end{align}

Set $\epsilon = 1/2^n$, $m_0 \coloneqq m_{N_0}$, and let
$
    \G_{n} \coloneqq \<\G_{n-1},\, \beta^{m_0}\> .
$
Applying \eqref{eqn:crit:final}, we obtain that for all $\a\in\Pi$,
\[
 \delta_\a\left(\G_{n}\right) \Hquad=\Hquad \limsup_{R\to\infty}\frac{\log |S_{\alpha,N_0,R}|}{R} \LE \delta_\a(\G_{n-1}) +\frac{1}{2^n}.
\]
This completes the proof of \Cref{prop:crit}. \qed

\subsection{Proof of the inductive step}\label{sec:pf_induction}
Let $x_+\in\F$ be as in \Cref{sec:xpm}, $\beta\in A^+$ be as in \Cref{sec:ping-pong}, and $m_0$ and $\G_n = \< \G_{n-1}, \beta^{m_0}\>$ be as in \Cref{prop:crit}. Define
$\beta_n \coloneqq \beta^{q_0m_0}$ and $\quad x_n \coloneqq x^+$.
We verify that $\G_n$ satisfies \Cref{cdn}:
By \Cref{prop:anosov}, $\G_n$ is Anosov and freely generated by $\G_{n-1}$ and $\beta_n$. 
By \Cref{prop:crit}, $\delta_a(\G_n) \le 1-\frac{1}{2^n}$.
Finally, by \eqref{eqn:xpm}, $d_\F(x_n,z_n) \le \frac{1}{n}$.

The proof of the inductive step is now complete. \qed

\section{Proof of Theorem \ref*{thm:main}}

We are now in a position to finish the proof of \Cref{thm:main}.

\begin{proof}[Proof of \Cref{thm:main}]
Let $ G $ be a non-compact connected real semi-simple Lie group with finite center. In \Cref{mainconstruction}, we constructed an increasing sequence $ (\G_n) $ of discrete subgroups of $ G $ satisfying \Cref{cdn}. Define a subgroup of $G$ by
\begin{equation}\label{eqn:groupGamma}
    \G = \bigcup_{n\in\N} \G_n.
\end{equation}
Since, by \Cref{cdn}\ref{cdn:3}, $ \G_n $ is freely generated by $ \{\beta_1,\dots,\beta_n\} $ for each $ n $,
 it follows that $ \G $ is freely generated by $ \{\beta_n:\ n\in\N \} $.

To show that $\G$ is discrete, we apply:

\medskip \noindent
{\bf Margulis lemma} (see \cite[Thm. 9.5]{BGS}){\bf .}
{\em Suppose that $(X,d)$ be a simply-connected, complete Riemannian manifold with non-positive bounded sectional curvature. There exists a constant $\epsilon>0$ such that the following holds: If $\Gamma$ is a discrete group of isometries of $X$ and $p\in X$,
    then the group generated by $\{\gamma\in \Gamma :\ d(p,\gamma p) <\epsilon\}$ is virtually nilpotent. }
\medskip

We show that the group $\G$ given by \eqref{eqn:groupGamma} is a discrete subgroup of $G$: To the contrary, if $\G$  is indiscrete, 
then it contains a sequence $(\g_m)$ of pairwise distinct elements converging to the identity element $1\in G$.
Let $X$ be the symmetric space of $G$ and let us fix a basepoint $p\in X$.
So, 
\begin{equation}\label{eqn:convergingtozero}
    d(p,\g_mp) \to 0 \quad\text{as } m\to\infty.
\end{equation}
By the Margulis lemma, it follows that for all sufficiently large integers $m_1$ and $m_2$, $\g_{m_1},\g_{m_2}$ lie in a virtually nilpotent subgroup of $G$ (since $\g_{m_1},\g_{m_2}$ lie in some $\G_n$, which is discrete).
Since the only nontrivial virtually nilpotent subgroup of a free group is $\Z$, $\g_{m_1},\g_{m_2}$ must commute for all large $m_1,m_2\in\N$.
Together with the fact that $\G$ is the union of an increasing sequence of free groups $(\G_n)$, it follows that the entire sequence $(\g_m)$ must lie in some $\G_{n_0}$.
However, in that case, \eqref{eqn:convergingtozero} implies that $\G_{n_0}$ is indiscrete, which is a contradiction.

Finally, according to \Cref{cdn}\ref{cdn:4}, the attractive fixed points $ \{x_n:\ n\in\N\} $ in the Furstenberg boundary $ \F $ of $ G $ for elements of $ \G_n $ form a dense set. Therefore, $ \Lambda_{\G} = \F $.
This concludes the proof of \Cref{thm:main}.
\end{proof}

A minor modification of the above proof shows that the analog of Theorem \ref{thm:main} holds in more constrained situations as follows:

\begin{theorem}\label{mainthm_lattice}
    Let $\Delta$ be a Zariski-dense subgroup of a connected semisimple real algebraic group $G$ acting minimally on the Furstenberg boundary $\F = G/P$ (e.g., $\Delta$ is a lattice).
    Then, there exists an infinitely generated discrete subgroup $\G$ of $G$, contained in $\Delta$, with a full limit set in $\F$.
\end{theorem}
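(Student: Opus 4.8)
The plan is to rerun the inductive construction of \Cref{mainconstruction} essentially verbatim, with the single change that at the $n$-th stage the new generator is required to lie in $\Delta$. Granting this, the union $\G = \bigcup_n \G_n$ automatically satisfies $\G \subset \Delta$, and the proofs that $\G$ is discrete, infinitely generated (free on $\{\beta_n\}$), and has $\Lambda_\G = \F$ go through word for word as in the proof of \Cref{thm:main}; in particular the Margulis-lemma argument for discreteness uses only that $\G$ is free and is an increasing union of the discrete groups $\G_n$, not any property of $\Delta$.

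The one new ingredient is a density statement for $\Delta$. Since $\Delta$ is Zariski-dense in $G$ and acts minimally on $\F$, its limit set $\Lambda_\Delta$ is all of $\F$; by the classical theory of Zariski-dense subgroups (Benoist), $\Delta$ then contains $\R$-regular semisimple (``loxodromic'') elements, and the attracting fixed points of such elements are dense in $\Lambda_\Delta = \F$. A standard ping-pong manipulation upgrades this to a statement about \emph{pairs}: given loxodromic $g, h \in \Delta$ with $g^+$ near a prescribed $\xi^+$ and $h^+$ near a prescribed $\xi^-$, and with the remaining fixed points in sufficiently general position (which can be arranged by perturbing $g$ and $h$ within the preceding density statement), the element $g^N h^{-N}$ is loxodromic for all large $N$, with attracting fixed point near $\xi^+$ and repelling fixed point near $\xi^-$; a further perturbation keeps it $\R$-regular semisimple. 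Hence the set $\{(\beta^+,\beta^-):\ \beta\in\Delta\text{ is $\R$-regular semisimple}\}$ is dense in $\F^2$.

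With this in hand the inductive step is carried out exactly as in \S\ref{sec:xpm}--\S\ref{sec:pf_induction}. \Cref{lem:xpm} produces a dense open set $W\subset\F^2$ of admissible pairs $(x^+,x^-)$; its intersection with a small ball around $(z_n,z_n)$ --- nonempty and open, and chosen to force \eqref{eqn:xpm} --- contains $(\beta_n^+,\beta_n^-)$ for some $\R$-regular semisimple $\beta_n\in\Delta$. Since $\beta_n$ is conjugate into $\mathrm{int}(A^+)$, we may replace the ambient Cartan decomposition $G=KA^+K$ by a conjugate one adapted to $\beta_n$, so that $x^\pm := \beta_n^\pm$ become the attracting/repelling fixed points of the regular elements of $A^+$; every result of \S2--\S3 is stated for an arbitrary Cartan decomposition and hence is unaffected. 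Replacing $\beta_n$ by a fixed large power if necessary (which stays in $\Delta$) so that the contraction $\beta_n^{\pm k}D\subset B^\circ$ of \S\ref{sec:ping-pong} holds, the ping-pong of \S\ref{sec:ping-pong} and the critical-exponent estimate \Cref{prop:crit} run with $\beta := \beta_n$ and yield an $m_0$ for which $\G_n := \langle\G_{n-1},\beta_n^{m_0}\rangle\subset\Delta$ is a rank-$n$ free Anosov subgroup satisfying \Cref{cdn}. For the base case one picks an $\R$-regular semisimple $\beta_1\in\Delta$ with attracting fixed point within distance $1$ of $z_1$ and replaces it by a large power so that $\op{Leb}\B(\G_1)=0$, just as in \S\ref{mainconstruction}.

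The main obstacle is thus the density statement for $\Delta$ --- this is the only place the hypotheses on $\Delta$ are used; everything else is a routine re-reading of \S2--\S4. A secondary, purely bookkeeping, point is to confirm that passing to a Cartan decomposition adapted to $\beta_n$ does not disturb \Cref{cor:precpt} or \Cref{lem:k}, which is immediate because those statements hold for an arbitrary Cartan decomposition.
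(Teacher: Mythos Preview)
Your proposal is correct and follows essentially the same route as the paper's own proof sketch: rerun the inductive construction of \S\ref{mainconstruction} verbatim, requiring each new generator $\beta_n$ to lie in $\Delta$, and observe that the proofs of discreteness, infinite generation, and $\Lambda_\G = \F$ carry over unchanged. The only substantive difference is in how you produce loxodromic elements of $\Delta$ with prescribed fixed-point pairs. The paper argues that Zariski-density of the loxodromics in $\Delta$ makes the $\Delta$-action on $\F$ proximal, and then proximality together with minimality lets one conjugate a single loxodromic so that both of its fixed points land in any prescribed open set $U\subset\F$; you instead start from density of attracting fixed points and build the desired element as a product $g^N h^{-N}$. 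Both arguments are standard and yield what is needed (yours gives the slightly stronger statement that $\{(\beta^+,\beta^-)\}$ is dense in $\F^2$, more than the construction requires). Your explicit remark about passing to a Cartan decomposition adapted to $\beta_n$ is a point the paper leaves implicit in ``the rest of the argument works as before''; it is worth saying, and as you note it is harmless since the relevant lemmas hold for any Cartan decomposition.
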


\begin{proof}[Proof sketch]
    To construct $\G$, one may follow the same inductive construction as described in Section \ref{mainconstruction} with the following additional observations: Since $\Delta$ is Zariski-dense, by \cite[Thm. 6.36]{Benoist-Quint-Book}, the set of loxodromic elements in $\Delta$ is Zariski-dense in $G$. Then, the action of $\Delta$ on $\F$ is proximal; that is, for any $x,y\in\F$ and $\epsilon>0$, there exists $g\in \Delta$ such that $d_\F(gx,gy) < \epsilon$. Together with the fact that $\Delta$ acts minimally on $\F$, it follows that for any nonempty open set $U\subset \F$, one can find a loxodromic element in $\Delta$ with attractive/repulsive fixed points lying in $U$. At the $n$-th step of the inductive construction, using these facts, one can find a loxodromic element $\beta\in\Delta$ whose attractive/repulsive fixed points $x^\pm$ satisfy condition \ref{lem:xpm:cdn2} in \Cref{lem:xpm} as well as \eqref{eqn:xpm}. The rest of the argument works as before.
\end{proof}

\section{\texorpdfstring{Some criteria for full limit set in $\SL(3,\R)$}{}}\label{sec:criterion}

The goal of this section is to provide some practical criteria for determining whether a Zariski-dense subgroup of $\SL(3,\Z)$ has a full limit set in the Furstenberg boundary of $\SL(3,\R)$. Throughout this section, we denote $G = \SL(3,\R)$. We prove:

\begin{theorem}\label{sl3full} Suppose that a subgroup $\Gamma \subset  \SL(3,\Z)$, which is Zariski-dense in $G$, satisfies one of the following:
\begin{enumerate}
\item $\Gamma$ contains a subgroup isomorphic to $\Z^2$.\label{sl3full:case1}
\item $\Gamma$ contains a standard copy of $\SL(2,\Z)$.\label{sl3full:case2}
\item $\Gamma$ contains an infinite order element with at least one complex (non-real) eigenvalue.\label{sl3full:case3}
\end{enumerate}
Then $\Gamma$ acts minimally on $G/P$. (Equivalently, $P$ acts minimally on $\Gamma\backslash G$.) 
\end{theorem}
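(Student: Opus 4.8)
\textit{Proof plan.} Here $G/P$ is the full flag variety $\Flag(\R^3)=\{(\ell,H):\ell\subset H,\ \dim\ell=1,\ \dim H=2\}$, realized as the incidence variety in $\PP(\R^3)\times\Gr(2,3)$ with $G$-equivariant projections $\pi_1\colon G/P\to\PP(\R^3)$ and $\pi_2\colon G/P\to\Gr(2,3)$. Write $\L=\L_\G$, $\L^+=\pi_1(\L)$, $\L^-=\pi_2(\L)$. Since $\G$ is Zariski dense, its actions on the proximal spaces $\PP(\R^3)$ and $\Gr(2,3)$ are proximal, so each has a unique minimal closed invariant set; taking $\pi_i$-preimages one checks these are exactly $\L^+$ and $\L^-$. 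As $\G$ acts minimally on $G/P$ iff $\L=G/P$, the plan is to control the fibers $\pi_i^{-1}(\cdot)\cap\L$ through two reductions. \emph{Bootstrap:} if $\L^+$ contains a projective line and $\L^-$ contains a pencil (a projective line in $\Gr(2,3)=\PP((\R^3)^*)$), then $\L^+=\PP(\R^3)$ and $\L^-=\Gr(2,3)$. Indeed, since $\L^+$ is the closure of a single $\G$-orbit and $\Gr(2,3)$ is compact, if $\L^+$ contains one projective line it contains a projective line through \emph{each} of its points; hence $\mathcal S:=\{\text{lines}\subseteq\L^+\}$ is a nonempty closed $\G$-invariant subset of $\Gr(2,3)$, so $\mathcal S\supseteq\L^-$; if $\L^-$ contains a pencil $\mathcal P_p$ of lines through $p$, then $\mathcal S\supseteq\mathcal P_p$, i.e.\ every line through $p$ lies in $\L^+$, so $\L^+=\PP(\R^3)$; the dual argument with $\mathcal T:=\{\text{pencils}\subseteq\L^-\}$, a closed $\G$-invariant subset of the space of pencils $\cong\PP(\R^3)$, which hence contains $\L^+=\PP(\R^3)$, gives $\L^-=\Gr(2,3)$. \emph{Endgame:} if $\L^-=\Gr(2,3)$ and $\pi_2^{-1}(H_0)\subseteq\L$ for some plane $H_0$, then $\L\supseteq\pi_2^{-1}(\G H_0)$ with $\overline{\G H_0}=\L^-=\Gr(2,3)$ by minimality; since $\pi_2$ is an open submersion, $\pi_2^{-1}(\G H_0)$ is dense, and $\L$ is closed, so $\L=G/P$.

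It then remains to produce, under each hypothesis, a projective line in $\L^+$, a pencil in $\L^-$, and a plane $H_0$ with $\pi_2^{-1}(H_0)\subseteq\L$; the key arithmetic input, used for \ref{sl3full:case1} and \ref{sl3full:case3}, is that $\SL(3,\Z)$ contains no singular semisimple element. For \ref{sl3full:case3}: let $g\in\G$ have infinite order and a non-real eigenvalue $\lambda=re^{i\theta}$ (third eigenvalue $r^{-2}$). No-singular-semisimplicity forces $r\neq1$ (else $g$ is semisimple with bounded powers, hence finite order in the discrete group $\SL(3,\Z)$) and $\theta\notin\pi\Q$ (else a power $g^{2q}$ has eigenvalue moduli $r^{2q},r^{2q},r^{-4q}$, a singular semisimple element of $\SL(3,\Z)$). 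Say $r>1$, let $V$ be the real $2$-plane of the $\lambda,\bar\lambda$-eigenvectors, $\ell_0$ the $r^{-2}$-eigenline. A direct computation shows that on $G/P$ the iterates $g^n$ push every flag outside a lower-dimensional subvariety into the fiber $\pi_2^{-1}(V)$, the line-coordinate rotating densely in $\PP(V)$; since $\L$ is $\G$-minimal it equals $\overline{\G\xi}$ for every $\xi\in\L$, and $\L$ is Zariski dense in the homogeneous space $G/P$, so one may take $\xi$ off that subvariety and conclude $\pi_2^{-1}(V)\subseteq\L$. This gives the line $\PP(V)\subseteq\L^+$ and the choice $H_0=V$; applying the same reasoning to $g^{-1}$ gives $\pi_1^{-1}(\ell_0)\subseteq\L$, hence the pencil $\pi_2(\pi_1^{-1}(\ell_0))\subseteq\L^-$. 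The bootstrap and endgame then finish \ref{sl3full:case3}.

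For \ref{sl3full:case2}: the standard copy $\Delta_0\subset\G$ of $\SL(2,\Z)$ fixes a line $\ell_1$ and a plane $H_1$, acts minimally on $\PP(H_1)\cong\RP^1$ and on the pencil $\mathcal P_{\ell_1}\cong\RP^1$, and acts linearly — hence with no nontrivial bounded invariant set — on the affine charts $\PP(\R^3)\setminus\PP(H_1)$ and $\Gr(2,3)\setminus\mathcal P_{\ell_1}$. Zariski density excludes finite $\G$-orbits, so $\L^+,\L^-$ are infinite, cannot lie in those charts, and therefore (being closed, $\Delta_0$-invariant, and meeting $\PP(H_1)$ resp.\ $\mathcal P_{\ell_1}$) contain $\PP(H_1)$ resp.\ $\mathcal P_{\ell_1}$ by $\Delta_0$-minimality. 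After the bootstrap $H_1\in\L^-$, so $\pi_2^{-1}(H_1)\cap\L$ is a nonempty closed $\Delta_0$-invariant subset of $\PP(H_1)\cong\RP^1$, hence all of it; the endgame with $H_0=H_1$ concludes.

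For \ref{sl3full:case1}: let $\Delta=\Z^2\subset\G$. No-singular-semisimplicity makes every semisimple element of $\SL(3,\Z)$ automatically $\R$-regular with real eigenvalues, so either (i-A) every element of $\Delta$ is semisimple — then $\Delta$ is a rank-two, hence cocompact, lattice in the identity component $D^\circ\cong\R^2$ of a maximal $\R$-split torus, and $L=\log\Delta$ meets no root hyperplane (such a vector would give a singular semisimple element), so every root is injective on $L$ and has dense image in $\R$ — or (i-B) $\Delta$ has a non-semisimple element, whose semisimple part has eigenvalues in $\{\pm1\}$ by integrality, so a power of it in $\Delta$ is a nontrivial unipotent. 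In case (i-A) the density of the root characters shows $\overline{\Delta\xi}=\overline{D^\circ\xi}$ is a closed coordinate triangle with nonempty interior whenever $\xi$ lies in one of the open chambers cut out by the coordinate lines $L_1,L_2,L_3$; so either $\L^+$ meets such a chamber, forcing $\L^+$ to have nonempty interior and hence $\L^+=\PP(\R^3)$ (a closed $\G$-invariant set with interior equals $\PP(\R^3)$ by minimality), or $\L^+\subseteq L_1\cup L_2\cup L_3$, which since $\L^+$ is infinite would contain an infinite arc of some $L_i$, forcing every $\g L_i$ to meet $L_1\cup L_2\cup L_3$ infinitely and so $\G$ to permute $\{L_1,L_2,L_3\}$ — impossible by Zariski density. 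Thus $\L^+=\PP(\R^3)$, and dually $\L^-=\Gr(2,3)$. The part I expect to be the main obstacle is the endgame here: in \ref{sl3full:case2} and \ref{sl3full:case3} a full fiber $\pi_2^{-1}(H_0)\subseteq\L$ came for free from a subgroup acting minimally on $\PP(H_0)\cong\RP^1$, but a $\Z^2$ in a split torus typically has two fixed points on such a fiber, so a finer argument is needed — presumably ruling out the $\G$-equivariant multisections $\Gr(2,3)\dashrightarrow G/P$ that a proper $\L$ with full projections would produce, using $\G$-minimality of $\L$ together with Zariski density and the no-singular-semisimple structure, and treating (i-B) analogously via the abelian group centralizing the unipotent.
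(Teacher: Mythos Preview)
Your treatment of cases \ref{sl3full:case2} and \ref{sl3full:case3} is essentially correct and parallels the paper's: both reduce first to minimality on $\PP(\R^3)$ (and its dual) and then analyze a single fiber of the projection. The bootstrap/endgame packaging is a clean formulation that the paper does not make explicit.

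Case \ref{sl3full:case1} has a genuine error. The claim $\overline{\Delta\xi}=\overline{D^\circ\xi}$ for $\xi$ in an open chamber is \emph{false}: the map $w\mapsto(\alpha_{1,3}(w),\alpha_{2,3}(w))$ is a linear isomorphism $\liea\to\R^2$ carrying the lattice $\log\Delta$ to a lattice, so in an affine chart the $\Delta$-orbit of a chamber point is discrete in the open quadrant and $\overline{\Delta\xi}$ has empty interior. Density of each root individually on $\log\Delta$ does not yield joint density, and your dichotomy (``nonempty interior, or contained in $L_1\cup L_2\cup L_3$'') collapses. The paper's argument exploits precisely this lattice structure in the correct way: lattice points in a thin strip $\{|\alpha_{1,3}(w)-t|<\epsilon\}$ are unbounded in the $\alpha_{2,3}$-direction, so one can send one root to a prescribed value while another tends to $-\infty$, which forces the boundary half-lines of the coordinate triangle into $\L^+$; Zariski density then produces a transversal line and further such limits fill out $\PP(\R^3)$.

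The two gaps you flag are real but are handled differently than you suggest. For the endgame, the paper notes that $\Delta$ acts on the fiber $\pi_1^{-1}(p_0)\cong\RP^1$ (over a coordinate point $p_0$) through a \emph{single} root, hence by a dense group of dilations, and asserts that an argument ``similar to'' the one above rules out the proper closed invariant subsets --- not your multisection idea. For (i-B), one first upgrades your observation: any $h\in\SL(3,\Z)$ commuting with a nontrivial unipotent has a repeated eigenvalue (a direct centralizer computation), hence eigenvalues $\pm1$ by your integrality argument, so a finite-index subgroup of $\Delta$ is a rank-two unipotent lattice; the paper then invokes Benoist--Oh to conclude that $\Gamma$ is a lattice.
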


\begin{remark}
The places where it is used that $\Gamma < \SL(3,\Z)$ and not  another lattice are

\begin{enumerate}
\item In case \ref{sl3full:case1}, as subgroups of $\SL(3, \Z)$ isomorphic to $\Z^2$ whose non-trivial elements are  semi-simple elements do not contain singular elements, which is not true for every lattice of $G$ (there exists both uniform, and non-uniform lattices containing such elements).

\item In case \ref{sl3full:case3}, as infinite order elements with complex eigenvalues don't have powers with real eigenvalues.
\end{enumerate}
\end{remark}

Before starting with the proof, we will illustrate how \Cref{sl3full} can be used computationally to show a discrete subgroup of $G$ has full limit set; we discuss an example of a group that is currently unknown to be thin or a lattice.

Consider the following two matrices in $\SL(3,\Z)$:
 \[
 a \coloneqq \begin{bmatrix} 1&1&2\\  0&1&1\\ 0&-3&-2  
 \end{bmatrix},
 \quad b \coloneqq \begin{bmatrix} -2&0&-1 \\ -5&1&-1 \\ 3&0&1  
 \end{bmatrix}.
 \]
 Let $\Gamma$ be the subgroup of $\SL(3,\Z)$ generated by $a$ and $b$.
It is known that $\Gamma$ is Zariski-dense in $\SL(3,\R)$. Moreover, it is the image of a representation of the $(3,3,4)$-triangle group $\<a, b \mid  a^3 = b^3 = (ab)^4 = 1\>$ into $G$. As far as we know, it is unknown whether $\Gamma$ is of infinite index in $\SL(3,\Z)$; see Kontorovich-Long-Lubotzky-Reid \cite[Ex. 11]{kontorovich2019thin}.

 \begin{proposition}\label{prop:KLLR}
  The group $\Gamma = \< a,b\>$ as above acts minimally on $G/P$. In particular, it is not an Anosov subgroup of $G$. 
 \end{proposition}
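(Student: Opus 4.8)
The plan is to verify that $\Gamma$ satisfies hypothesis \ref{sl3full:case3} of \Cref{sl3full} and then invoke that theorem. Since $\Gamma$ is Zariski-dense in $G=\SL(3,\R)$ (as recalled above, following \cite{kontorovich2019thin}) and $\Gamma\subset\SL(3,\Z)$, it suffices to exhibit a single element $w\in\Gamma$ of infinite order possessing a non-real eigenvalue. First I would record that the three obvious elements $a$, $b$, $ab$ are all of finite order: a direct computation gives characteristic polynomials $t^3-1$, $t^3-1$, and $(t-1)(t^2+1)$, respectively, so that $a^3=b^3=(ab)^4=1$ (consistent with $\Gamma$ being a quotient of the $(3,3,4)$-triangle group); these elements therefore do not witness \ref{sl3full:case3}, and one must look deeper into the group.

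The main step is to produce, by a direct (computer-assisted) search through words in $a$ and $b$, an element $w\in\Gamma$ whose characteristic polynomial $\chi_w(t)=t^3-(\operatorname{tr}w)\,t^2+(\operatorname{tr}\wedge^2 w)\,t-1$ has negative discriminant — forcing exactly one real eigenvalue and a complex-conjugate pair of non-real ones — and which simultaneously has infinite order. The latter can be certified for the word the search returns either from the inequality $|\operatorname{tr}w|>3$ (a finite-order matrix is diagonalizable over $\CC$ with roots-of-unity eigenvalues, hence has trace of absolute value at most $3$) or, when the trace happens to be small, from the irreducibility of $\chi_w$ over $\mathbb{Q}$ together with the observation that $\chi_w$ is not a product of cyclotomic polynomials. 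Both the sign of the discriminant and these integrality/irreducibility checks are carried out in exact integer arithmetic; this is the computational heart of the argument, and the step for which the computations of O.~Landersberg and G.~Martone are acknowledged. Granting such a $w$, \Cref{sl3full}\ref{sl3full:case3} applies and gives that $\Gamma$ acts minimally on $G/P$, i.e. $\Lambda_\Gamma=G/P$.

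For the ``in particular'' clause, I would argue by contradiction: if $\Gamma$ were an Anosov subgroup of $G$, then any two distinct points of $\Lambda_\Gamma$ would be in general position, by the basic properties of Anosov limit sets recalled in the preliminaries. But $\Lambda_\Gamma=G/P=\F$ is far from having this property: fixing any $x\in\F$, the set $\E_x\subset\F$ of flags \emph{not} in general position with $x$ is a nonempty proper subvariety containing infinitely many points (e.g. every flag sharing the plane of $x$), so one may pick $y\in\E_x$ with $y\neq x$; then $x,y$ are distinct points of $\Lambda_\Gamma$ that fail to be antipodal — a contradiction. Hence $\Gamma$ is not Anosov.

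I expect the second step to be the genuine obstacle: because the abstract isomorphism type of $\Gamma$ is unknown (faithfulness of the triangle-group representation is open), there is no structural shortcut predicting which word will have a non-real eigenvalue, and in practice ``short'' words in $a,b$ tend to be either finite-order or $\R$-loxodromic with entirely real spectrum. Locating a witnessing $w$ therefore requires an honest search, after which its infinite order and the sign of $\operatorname{disc}(\chi_w)$ must be verified exactly.
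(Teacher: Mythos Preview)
Your proposal is correct and follows exactly the paper's approach: exhibit an infinite-order element of $\Gamma$ with a non-real eigenvalue and invoke \Cref{sl3full}\ref{sl3full:case3}. The paper simply records the outcome of the search you describe, namely the explicit word
\[
w = ba^{-1}ba^{-1}ba^{-1}ba^{-1}b^{-1}aba^{-1}ba^{-1}b^{-1}aba^{-1}
\]
of length $18$, and does not spell out the ``not Anosov'' clause, which you correctly derive from the antipodality of Anosov limit sets.
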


 \begin{proof}
An extensive computation shows that the element of word length $18$ given by
\[w := ba^{-1}ba^{-1}ba^{-1}ba^{-1}b^{-1}aba^{-1}ba^{-1}b^{-1}aba^{-1}\]
has infinite order and has a pair of complex eigenvalues and, therefore, by Theorem \ref{sl3full}, $\Gamma$ has a full limit set in $G/P$.
\end{proof}

We begin with the proof of Theorem \ref{sl3full}, which is a case by case argument, and we will break it into different propositions. We will make use of the identification of $G/P$ with $\Flag(\R^3)$, the space of flags  in $\R^3$.
For convenience, a complete flag $\{0\} \subset V_1 \subset V_2\subset \R^3$ in $\R^3$ can be viewed as a pointed line $(p,l)$ in $\PP(\R^3)$, where $p = \PP(V_1)$ lies in $l = \PP(V_2)$. Furthermore, let  
\[
\pi : \Flag(\R^3) \to \PP(\R^3) \quad\text{and} \quad \pi^{*} : \Flag(\R^3) \to \PP((\R^3)^*)
\]be the natural $G$-equivariant projections.

We will make use extensively of the following proposition to deal with most of the cases of Theorem \ref{sl3full}. 

\begin{proposition}\label{samicriterion}  Let $\Gamma$ be a discrete, Zariski-dense subgroup of $G = \SL(3,\R)$. 
Suppose the limit set $\Lambda_{\Gamma}$ contains at least one fiber of both projections $\pi$ and $\pi^*$, then $\Lambda_{\Gamma} = G/P$.
\end{proposition}

\begin{proof}
   Since $\G$ is Zariski-dense, the limit set $\Lambda_\G$ of $\G$ is the unique non-empty, minimal $\G$-invariant closed subset of $\Flag(\R^3)$ \cite{Benoist}, and the same is true for $\pi(\Lambda_{\Gamma})$ and $\pi^{*}(\Lambda_{\Gamma})$ in $\PP(\R^3)$ and $\PP((\R^3)^{*})$ respectively. We first show that $\Gamma$ acts minimally on $\PP(\R^3)$ (i.e.  $\pi(\Lambda_\G) = \PP(\R^3)$). Observe that $\pi(\Lambda_\G)$ must contain a projective line $l_0$ (by the hypothesis on $\pi^*$). Let $p_0$ be a point in $\PP(\R^3)$ such that $\pi^{-1}(p_0)$ is contained in $\Lambda_{\Gamma}$. Observe that $$\bigcup_{(p_0, l) \in \Flag(\R^3)}\{q \in \PP(\R^3 ) :\  q \in l\} = \PP(\R^3).$$ Therefore, we only need to show that if $(p_0, l) \subset \pi^{-1}(p_0)$, then $l \subset \pi(\Lambda_{\Gamma})$. From the uniqueness of the limit set $\pi^{*}(\Lambda_{\Gamma})$, it follows that for any projective line $l$ in $\pi^{*}(\Lambda_{\Gamma})$, there exists a sequence of elements $\gamma_n$ such that $\gamma_n(l_0) \to l$, and as the line $\gamma_n(l_0) \subset \pi(\Lambda_\Gamma)$, then $l \subset \pi(\Lambda_\Gamma)$. This finishes the proof of the minimality of $\Gamma$ acting on $\PP(\R^3)$.

   Let $\Lambda'_{\Gamma}$ be the set of flags $(p,l)$ such that $\pi^{-1}(\pi(p,l))$ is contained in $\Lambda_{\Gamma}$.   this is a closed non-empy $\Gamma$-invariant subset of $\Lambda_{\Gamma}$, by minimality $\Lambda'_{\Gamma} = \Lambda_{\Gamma} $, and as $\pi(\Lambda_{\Gamma}) = \PP(\R^3)$, we must have $\Lambda_{\Gamma} = \Flag(\R^3).$
\end{proof}

\subsection{Case \ref{sl3full:case3}: $\Gamma$ contains an element with complex (non-real) eigenvalues} Theorem \ref{sl3full}, case \ref{sl3full:case3}, follows immediately from the following:

\begin{proposition}\label{criterion:SL3}
    Let $\Gamma$ be a discrete, Zariski-dense subgroup of $G = \SL(3,\R)$. Suppose there exists an element $\g_0\in\Gamma$
    such that $\g_0$ has a complex eigenvalue $\lambda e^{2\pi i\theta}$ for some $\lambda>1$ and $\theta$ irrational. Then $\G$ acts minimally on $\Flag(\R^3)$.
\end{proposition}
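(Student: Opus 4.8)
The plan is to show that the closure $\overline{\Gamma \cdot F}$ of any flag $F = (\ell \subset V) \in \Flag(\R^3)$ is all of $\Flag(\R^3)$; since the limit set $\Lambda_\Gamma$ is the unique minimal closed invariant set, this forces minimality. The key player is the element $\gamma_0$ with complex eigenvalue $\lambda e^{2\pi i \theta}$, $\lambda > 1$, $\theta \notin \mathbb{Q}$. Writing the third (real) eigenvalue as $\nu = \lambda^{-2}$, the matrix $\gamma_0$ has an invariant $2$-plane $\Pi_0$ (the sum of the conjugate complex eigenlines) on which it acts by a rotation-scaling by $\lambda e^{2\pi i\theta}$, and an invariant line $L_0$ (the $\nu$-eigenline) transverse to $\Pi_0$. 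The attracting dynamics of powers $\gamma_0^n$ on $\Flag(\R^3)$ are governed by this decomposition.

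First I would analyze the dynamics of $\gamma_0^n$ on $\Flag(\R^3)$. For a generic flag $F = (\ell \subset V)$ — specifically one with $\ell \not\subset \Pi_0 \cup (L_0 \oplus (\ell\cap\Pi_0))$-type degeneracies and $V$ transverse to $L_0$ — the lines $\gamma_0^n \ell$ converge into $\Pi_0$, and because the action on $\PP(\Pi_0)$ is an \emph{irrational rotation}, the accumulation set of $\{\gamma_0^n \ell\}$ is all of $\PP(\Pi_0)$; simultaneously the planes $\gamma_0^n V$ converge to $\Pi_0$ (the unique attracting $2$-plane, since $\lambda^2 > |\nu\lambda| = \lambda^{-1}$). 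Hence the accumulation set of $\{\gamma_0^n F\}$ contains $\{(\ell' \subset \Pi_0) : \ell' \in \PP(\Pi_0)\}$, a full $\PP(\Pi_0) \cong \mathbb{S}^1$'s worth of flags supported on $\Pi_0$. The same argument applied to $\gamma_0^{-n}$ (whose relevant invariant plane is $L_0 \oplus (\text{one complex eigenline})$... more precisely, the repelling behaviour) produces another circle of flags. So already $\overline{\Gamma\cdot F}$, for suitable $F$, contains a circle $\mathcal{C}_0 = \{(\ell'\subset\Pi_0) : \ell'\subset\Pi_0\}$; and by $\Gamma$-invariance of the closure, it contains $g\,\mathcal{C}_0$ for every $g\in\Gamma$.

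Next I would use Zariski-density to conclude. The set $\mathcal{M} = \overline{\Gamma\cdot F}$ is a closed $\Gamma$-invariant subset containing the circle $\mathcal{C}_0$, hence containing $\overline{\Gamma\cdot\mathcal{C}_0}$. Now $\mathcal{C}_0$ is the flag curve over a single point of the Grassmannian $\Gr(2,3) = \PP((\mathbb{R}^3)^*)$, namely the point $[\Pi_0]$. By a theorem of Benoist (the limit set of a Zariski-dense group in each flag variety $G/P_\alpha$ is the unique minimal set, and in particular $\Gamma$ acts minimally on $\PP(\mathbb{R}^3)$ and on $\Gr(2,3)$ when $\Gamma$ is Zariski-dense — cf. \cite[Thm. 6.36]{Benoist-Quint-Book} on density of loxodromic elements, or more directly the fact that for Zariski-dense $\Gamma$ the limit set in $\PP(\mathbb{R}^3)$ is not contained in any proper subvariety), the orbit $\Gamma\cdot[\Pi_0]$ is dense in $\Gr(2,3)$. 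Therefore $\overline{\Gamma\cdot\mathcal{C}_0}$ projects onto all of $\Gr(2,3)$ under the map $\Flag(\R^3)\to\Gr(2,3)$. To upgrade "surjects onto the base" to "equals the total space", I would bring the \emph{line} coordinate into play: one can choose the initial flag $F$ (or apply the $\gamma_0^{\pm n}$ analysis to several translates $g\mathcal{C}_0$ which meet in configurations forcing the line to vary) so that $\mathcal{M}$ contains, over at least one $2$-plane $W$, the \emph{entire} fibre $\{(\ell'\subset W)\}\cong \PP(W)$ — this again follows from the irrational-rotation mechanism since the rotation acts transitively-up-to-closure on $\PP(\Pi_0)$. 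Once $\mathcal{M}$ contains one full fibre, $\Gamma$-invariance plus density of $\Gamma\cdot[W]$ in $\Gr(2,3)$ spreads full fibres densely, and closedness of $\mathcal{M}$ then gives $\mathcal{M} = \Flag(\R^3)$. Since $F$ was arbitrary, $\Gamma$ acts minimally.

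\medskip

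The main obstacle I anticipate is the bookkeeping in the first step: carefully checking that for a \emph{dense open} (equivalently, co-null, or at least nonempty-interior) set of initial flags $F$ the sequence $\gamma_0^n F$ does not fall into one of the degenerate sub-loci where the limit is a single flag rather than the whole circle $\mathcal{C}_0$, and then arguing that it suffices to establish minimality of $\overline{\Gamma\cdot F}$ for $F$ in such a good set — which is automatic because $\Lambda_\Gamma$ is the unique minimal set and $\overline{\Gamma\cdot F} \supseteq \Lambda_\Gamma$ always, so if $\overline{\Gamma\cdot F}=\Flag(\R^3)$ for one $F$ we get $\Lambda_\Gamma = \Flag(\R^3)$ and hence minimality for all. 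A secondary subtlety is making the passage from "dense set of fibres" to "all of $\Flag(\R^3)$" rigorous; this is where closedness of $\mathcal{M}$ and continuity of the fibration over the (compact) Grassmannian do the work, but one should phrase it so that no extra transversality hypothesis sneaks in.
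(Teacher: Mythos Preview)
Your overall architecture is sound and essentially dual to the paper's: you fibre $\Flag(\R^3)$ over $\Gr(2,3)$ and aim to capture the full fibre $\mathcal{C}_0$ over $[\Pi_0]$, whereas the paper fibres over $\PP(\R^3)$ and captures the fibre $\pi^{-1}(x_0)$ over $x_0 = [L_0]$. The dynamical step---that for generic $F$ the iterates $\gamma_0^n F$ accumulate on all of $\mathcal{C}_0$ thanks to the irrational rotation---is correct.

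The genuine gap is your appeal to Benoist for minimality on the base. You write that ``$\Gamma$ acts minimally on $\PP(\R^3)$ and on $\Gr(2,3)$ when $\Gamma$ is Zariski-dense''. This is \emph{false}: a Zariski-dense Anosov free subgroup of $\SL(3,\R)$ has a Cantor limit set in $\PP(\R^3)$, and since that limit set is the unique minimal set, the action on $\PP(\R^3)$ is not minimal. Benoist's result that the limit set is Zariski-dense (not contained in any proper subvariety) is much weaker than being all of $\PP(\R^3)$. So the assertion that $\Gamma\cdot[\Pi_0]$ is dense in $\Gr(2,3)$ has no justification, and your argument collapses at exactly this point.

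This is not a technicality: minimality on the base is the heart of the proposition, and proving it \emph{requires} the element $\gamma_0$. The paper devotes two lemmas to this. First (\Cref{lem:one}) it shows that the $\Gamma$-orbit of any projective line is dense in $\PP(\R^3)$, by using $\gamma_0^{-n}$ to push a generic line toward the fixed point $x_0$ and then exploiting the irrational rotation on $l_0 = \PP(\Pi_0)$ to sweep out everything. Second (\Cref{lem:two}) it deduces minimality on $\PP(\R^3)$: any orbit closure contains $l_0$, hence contains $\Gamma\cdot l_0$, which is dense by the first lemma. Only \emph{after} this work does the paper lift to $\Flag(\R^3)$ via the fibre argument you describe. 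Your proof would need the analogous argument for $\Gr(2,3)\cong\PP((\R^3)^*)$; by duality the same two lemmas apply (with $\gamma_0^{-1}$ playing the role of $\gamma_0$), but you must actually carry them out rather than cite Zariski-density.

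A secondary issue: your final reduction ``if $\overline{\Gamma\cdot F}=\Flag(\R^3)$ for one $F$ we get $\Lambda_\Gamma = \Flag(\R^3)$'' is a non sequitur as written---a single dense orbit does not force the minimal set to be full. The clean fix is to work inside $\Lambda_\Gamma$ from the start: since $\Lambda_\Gamma$ is Zariski-dense in $\Flag(\R^3)$, it contains a flag avoiding your degenerate loci, and the $\gamma_0$-iterates of that flag already produce $\mathcal{C}_0\subset\Lambda_\Gamma$. This is exactly how the paper organizes the endgame.
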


\begin{proof}
    Let $\G$ and $\g_0$ be as described in the statement above. Then, $\g_0$ has the Jordan canonical form
    \[
    \begin{bmatrix}
        \lambda \cos\theta & -\lambda\sin\theta&\\
        \lambda\sin\theta & \lambda\cos\theta&\\
        && \lambda^{-2}\\
    \end{bmatrix}.
    \]
    Hence, $\g_0$ preserves some complementary subspaces $V$ and $W$ of $\R^3$ of dimensions $2$ and $1$, respectively. With respect to an appropriate inner product on $\R^3$, $\g_0$ acts on $V$ by rotating by an angle $\theta$ and dilating by a factor $\lambda >1$.
    Since $\theta$ is irrational, the induced actions of the semigroups $\{\g_0^n :\ n\in\N\}$ and $\{\g_0^{-n} :\ n\in\N\}$ on $l_0 \coloneqq \PP(V)$ are minimal (i.e., the orbit of any point is dense). 
    Furthermore, $\g_0$ contracts $W$ by a factor $\lambda^2 > 1$. Let $p_0 \coloneqq \PP(W)$, the unique fixed point of $\g_0$ in $\PP(\R^3)$.

    \begin{lemma}\label{lem:one}
        We have $\pi^{-1}(p_0) \subset \Lambda_\Gamma$ and $(\pi^*)^{-1}(l_0) \subset \Lambda_\Gamma$.
    \end{lemma}

    \begin{proof}
        Pick any point $(p,l)\in\Lambda_\Gamma$. 
        Since $\G$ is Zariski-dense, we may assume that $p\not\in l_0$. Then, $\gamma_0^{-n} p \to p_0$ as $n\to\infty$ and, therefore, we conclude that the sequence $(\g^{-n}(p,l))_{n\in\N}$ has an accumulation point of the form $(p_0,l)$ for some line $l\subset \PP(\R^3)$ passing through $p_0$. This point must be in $\Lambda_\Gamma$ whose $\langle \gamma_0\rangle$-orbit closure is the fiber $\pi^{-1}(p_0)$. Thus, the first conclusion follows.

        For the second conclusion, pick any point $(p,l)\in\Lambda_\Gamma$ such that $p_0\not\in l$ and apply a similar argument as above. We leave the details to the reader.
    \end{proof}

The proposition now follows from \Cref{samicriterion}.
\end{proof}

\subsection{Case \ref{sl3full:case1}: $\Gamma$ contains a subgroup isomorphic to $\Z^2$} We will continue with the proof of Theorem \ref{sl3full}. This time we consider the case where $\Gamma$ contains a copy of $\Z^2$. We will use again extensively Proposition \ref{samicriterion}.

\begin{proposition}\label{prop:sl3full:case1}
Let $G= \SL(3,\R)$ and $\Gamma \subset G$ be a Zariski-dense discrete subgroup that contains an abelian subgroup isomorphic to $\Z^2$ that does not contain any singular semi-simple elements. 
Then, $\Gamma$ acts minimally on $G/P$.
\end{proposition}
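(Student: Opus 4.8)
The plan is to follow the proof of \Cref{criterion:SL3}, with the minimal irrational rotation on the invariant line there replaced by a density mechanism supplied by the $\Z^2$. First I would reduce to the case that the abelian subgroup $A\cong\Z^2$ is $\R$-split and, after conjugating $\G$, lies in the diagonal torus of $\SL(3,\R)$: if some nontrivial $a\in A$ had a non-real eigenvalue $re^{i\psi}$, then $\psi$ would be an irrational multiple of $\pi$ (a rational multiple would make a suitable power of $a$ a singular semisimple element of $A$, and $r=1$ would make $a$ elliptic, hence singular semisimple), so, replacing $a$ by $a^{\pm1}$ to arrange $r>1$, \Cref{criterion:SL3} applies and we are done; hence we may assume every element of $A$ has real eigenvalues. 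Taking for granted that $A$ consists of semisimple elements --- which is automatic in the setting of \Cref{sl3full}, since $\SL(3,\Z)$ contains no singular semisimple elements --- the no-singular-semisimple hypothesis then forces every nontrivial $a\in A$ to be $\R$-regular, so the elements of $A$ are simultaneously diagonalizable. Fix the eigenbasis $e_1,e_2,e_3$, the corresponding coordinate points, lines, planes and coordinate flags $F_\sigma$ ($\sigma\in S_3$), and the two $\G$-equivariant projections $\pi_1\colon\Flag(\R^3)\to\PP(\R^3)$ and $\pi_2\colon\Flag(\R^3)\to\Gr(2,3)$.

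For $i\ne j$, the homomorphism $\chi_{ij}\colon A\to\R$, $a\mapsto\log|\lambda_i(a)|-\log|\lambda_j(a)|$, has kernel equal to the set of singular semisimple elements of $A$, hence trivial; therefore each $\chi_{ij}(A)$ is a rank-two, hence dense, subgroup of $\R$, and $a\mapsto(\log|\lambda_1(a)|,\log|\lambda_2(a)|,\log|\lambda_3(a)|)$ identifies $A$ with a rank-two lattice in $\mathfrak{a}\cong\R^2$. Two consequences. (a) The nonzero vectors of a planar rank-two lattice have directions dense in the circle, so $A$ contains an element whose attracting fixed flag is $F_\sigma$, for every one of the six $\sigma\in S_3$; hence all six coordinate flags lie in $\Lambda_\G$. (b) By density of $\chi_{ij}(A)$, on the ``edge'' $E_{ij}:=\pi_2^{-1}([e_ie_j])$ --- the $\PP^1$ of flags with plane $[e_ie_j]$ --- and on the ``edge'' $E^i:=\pi_1^{-1}(e_i)$ --- the $\PP^1$ of flags with line $e_i$ --- every $A$-orbit of a non-vertex flag is dense.

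The heart of the argument is then to prove the two identities $\Lambda_\G=\pi_2^{-1}(L^*)$, $L^*:=\pi_2(\Lambda_\G)$, and $\Lambda_\G=\pi_1^{-1}(L)$, $L:=\pi_1(\Lambda_\G)$; these force $L=\PP(\R^3)$ --- if $x_0\in L$ then, by the second identity, every plane through $x_0$ lies in $L^*$, whence by the first every line of $\PP(\R^3)$ lies in $L$ --- and therefore $\Lambda_\G=\Flag(\R^3)$, which is the assertion. For $\Lambda_\G=\pi_2^{-1}(L^*)$: Zariski-density of $\G$ makes $\Lambda_\G$ Zariski-dense in $\Flag(\R^3)$, so it contains a ``generic'' flag $F_0=(y,Q)$ ($y$ with all coordinates nonzero, $Q$ neither a coordinate plane nor containing a coordinate point). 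Using the rank-two lattice $(\chi_{12},\chi_{13})(A)\subset\R^2$, pick $b_k\in A$ with $\chi_{12}(b_k)$ bounded --- hence, after passing to a subsequence, convergent --- and $\chi_{13}(b_k)\to+\infty$; then $b_k y$ converges to a non-vertex point of $[e_1e_2]$, while $b_k Q\to[e_1e_2]$, so $b_k F_0$ converges to a non-vertex flag $F_1\in\Lambda_\G\cap E_{12}$. By (b), the $A$-orbit of $F_1$ is dense in $E_{12}$, so $E_{12}\subseteq\Lambda_\G$; by $\G$-invariance and properness of $\pi_2$, $\Lambda_\G\supseteq\overline{\bigcup_{\g\in\G}\g E_{12}}=\pi_2^{-1}\bigl(\overline{\G\cdot[e_1e_2]}\bigr)=\pi_2^{-1}(L^*)$, the last equality because $[e_1e_2]\in L^*$ and $L^*$ is the unique minimal closed $\G$-invariant subset of $\Gr(2,3)$ (Benoist \cite{Benoist}). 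The reverse inclusion is trivial; the identity $\Lambda_\G=\pi_1^{-1}(L)$ follows symmetrically, producing a non-vertex flag in $\Lambda_\G\cap E^1$ by choosing $b'_k\in A$ with $\chi_{12}(b'_k),\chi_{13}(b'_k)\to+\infty$ but $\chi_{23}(b'_k)$ bounded, then spreading along $E^1$ and over $\G\cdot e_1$.

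The main obstacle is this last step. A discrete copy of $\Z^2$ in the torus has only countable orbits in $\Flag(\R^3)$, so it cannot by itself make an orbit dense in a positive-dimensional subvariety; the point is that it \emph{can} make orbits dense inside the one-dimensional edges --- this is precisely where the no-singular-semisimple hypothesis is used, via the density of $\chi_{ij}(A)$ --- and one must then interleave this one-dimensional density with the Zariski-density of $\G$, which supplies the transverse spreading. A secondary technical point is that the $\R$-diagonal torus is disconnected, so the signs of the eigenvalues of elements of $A$ must be tracked: the $A$-orbit of a non-vertex flag may a priori be dense only in one of the two arcs of an edge $E_{ij}$, and covering the complementary arc requires bringing in elements of $\G$ outside the torus (this is harmless, since $\G$ is Zariski-dense and so does not preserve any ``quadrant'' structure, but it makes the bookkeeping less clean than in \Cref{criterion:SL3}).
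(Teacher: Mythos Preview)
Your argument in the diagonalizable case is correct and in fact a bit slicker than the paper's. The paper first proves minimality of $\Gamma$ on $\PP(\R^3)$ directly (its \Cref{p3min}), via an explicit computation with half-lines and invariant triangles, and only then lifts to $\Flag(\R^3)$ by analyzing a single fiber $\pi_1^{-1}([0,0,1])$. You instead use the two projections $\pi_1,\pi_2$ symmetrically: push a generic flag of $\Lambda_\G$ into an edge $E_{12}$, spread along the edge using the density of $\chi_{12}(A)$, and invoke Benoist's uniqueness of the minimal invariant set in $\Gr(2,3)$ to obtain $\Lambda_\G=\pi_2^{-1}(L^*)$; the dual identity $\Lambda_\G=\pi_1^{-1}(L)$ and your deduction $L=\PP(\R^3)$ are clean. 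The sign/arc issue you flag at the end is real and is handled in the paper with comparable hand-waving.

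There is, however, a genuine gap in your reduction step. You write that you ``take for granted that $A$ consists of semisimple elements'' and assert this is ``automatic in the setting of \Cref{sl3full}, since $\SL(3,\Z)$ contains no singular semisimple elements''. This inference is wrong: $\SL(3,\Z)$ contains unipotent copies of $\Z^2$ (for instance inside the strictly upper-triangular subgroup), and such a $\Z^2$ contains no nontrivial semisimple elements whatsoever --- hence no singular semisimple ones --- so it satisfies the hypothesis of the proposition while falling entirely outside your argument. The paper confronts this by first establishing (using the Jordan form, discreteness, and the no-singular-semisimple hypothesis) a dichotomy between the unipotent and diagonalizable cases, and then disposing of the unipotent case via the theorem of Benoist--Oh \cite{Benoist-Oh}, which forces $\Gamma$ to be a lattice and hence to act minimally. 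Your complex-eigenvalue reduction at the start is fine but orthogonal to this issue; you still need a device to rule out or handle unipotent (and mixed Jordan-type) $\Z^2$'s.
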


\begin{proof}

Let $\Delta \subset \Gamma$ be the subgroup of $\Gamma$ isomorphic to $\Z^2$. Using the Jordan canonical form, the discreteness of $\Gamma$, and the fact that $\Delta$ does not contain singular elements, we can assume (up to replacing $\Delta$ with a finite index subgroup), the following:

\begin{enumerate}

\item All the elements of $\Delta$ are semi-simple and $\Delta$ is conjugate to a group of diagonalizable matrices.

\item $\Delta$ consist of unipotent elements.

\item $\Delta$ is generated by two elements:

 $$\gamma_1 = \begin{bmatrix} e^t &1 &0 \\ 0 &e^{t}&0 \\0 &0&e^{-2t} 
\end{bmatrix}, \quad   \gamma_2 = \begin{bmatrix} e^s &x &0 \\ 0 &e^{s}&0 \\0 &0&e^{-2s} 
\end{bmatrix},$$ where $(t,s) \neq (0,0)$, and $x \not\in \mathbb{Q}$.

\end{enumerate}

Let us consider case (i) where all the elements are semisimple, we can assume after conjugation that $\Gamma$ contains a subgroup isomorphic to $\Z^2$ consisting of diagonal matrices. Let $A$ be the group of diagonal matrices in $G$ and let $\liea$ be its lie algebra. 
Since $\liea$ comprises $3\times 3$ traceless diagonal matrices, we will identify
$$\liea = \big\{(x_1,x_2, x_3 ) \in \R^3  :\  x_1 + x_2 + x_3 = 0\big\}$$
by sending $X\in\liea$ to $(X_{11},X_{22},X_{33})$.
We define $\Delta = A \cap \Gamma$ and $H = \log \Delta $. For $i,j \in \{1,2,3\}$ where $i \neq j$, the roots $\alpha_{i,j}: \liea \to \R$ are given by $\alpha_{i,j}(w) = x_i - x_j$, for $w = (x_1,x_2, x_3) \in \liea$.

Our assumption that all elements of $\Delta$ are nonsingular implies that the linear maps $\alpha_{i,j}: H \to \mathbb{R}$ have trivial kernel and therefore dense image. 
This will be the key to show that the limit set is equal to $\Flag(\R^3)$.

We have the following:

\begin{lemma} The limit set $\pi(\Lambda_{\Gamma})$ of $\Gamma$ in $\PP(\R^3)$ contains the lines $x= 0$, $y =0$, and $z = 0$.
\end{lemma}



\begin{proof}

We consider $\PP(\R^3)$ in the projective coordinates $[x,y,z]$. Let us take $[x_0,y_0,z_0] \in \pi(\Lambda_{\Gamma})$. By Zariski density, we can (and will) assume all the coordinates of $[x_0,y_0,z_0]$ are different from zero. Let us further assume that $ x_0,y_0, z_0 > 0.$ The other cases follow similarly. 

Consider the affine chart $U_z \coloneqq \PP(\R^3) \cap \{z = 1\}$. 
The action of $w \in \liea$ on $U_z$ in the affine coordinate $(x,y)$ is given by
\begin{equation}\label{eqn:p3min}
    e^{w}\cdot (x,y) = (e^{\alpha_{1,3}(w)}x,\, e^{\alpha_{2,3}(w)}y).
\end{equation}
By the density of the image in $\R$ of $\alpha_{i,j}$ restricted to $H$, it follows that for every $t \in \R$, there exists a sequence $w_n \in H$ such that $ \alpha_{1,3}(w_n) \to t$ and $\alpha_{2,3}(w_n) \to - \infty$.
So, we have that $e^{w_n}\cdot [x_0, y_0, z_0] \to [ e^{t} {(x_0/z_0)},0,1]$, which shows that the segment of line $l_y = \{[x,y,z] :\ x>0,\,y = 0,\, z > 0\}$ is contained in $\pi(\Lambda_{\Gamma})$.
Arguing similarly, we conclude that the other two line segments $l_x
= \{[x,y,z] :\ x = 0,\, y>0,\, z > 0\}$ and $l_z = \{[x,y,z] : x > 0,\, y>0,\, z = 0\}$ 
are also contained in $\pi(\Lambda_{\Gamma})$.
Note that $l_x$, $l_y$, and $l_z$ forms a $\Delta$-invariant projective triangle.
Using Zariski density of $\G$ and by applying a similar argument, it is not difficult to see that $\pi(\Lambda_{\Gamma})$ must contain the full three lines $x=0$, $y=0$, and $z=0$.
\end{proof}

Arguing similarly, it follows that the limit set $\pi^*(\Lambda_{\Gamma})$ of $\Gamma$ in $\PP((\R^3)^{*})$ contains three lines, and by Proposition \ref{samicriterion} we have $\Lambda_{\Gamma} = \Flag(\R^3)$ in case (i).\\

Now we can consider case (ii) for $\Delta \subset \Gamma$ (all the elements of $\Delta$ are unipotent). Up to conjugation, we can assume that  $\Delta$ is generated by $\gamma_1, \gamma_2$, where $$ \gamma_1 = \begin{bmatrix} 1 &0 &1 \\ 0 &1&0 \\0 &0&1 
\end{bmatrix}$$ and $\gamma_2$ is equal to one of the following three matrices:

$$\begin{bmatrix} 1 &1 &0 \\ 0 &1&0 \\0 &0&1 
\end{bmatrix},\quad
\begin{bmatrix} 1 &0 &0 \\ 0 &1&1 \\0 &0&1 \end{bmatrix}, \quad
\begin{bmatrix} 1 &1 &0 \\ 0 &1&x \\0 &0&1 
\end{bmatrix} \text{, where } x\neq 0.$$

In all these three cases, we can again use Proposition \ref{samicriterion}. To illustrate the proof (all of them similar), we consider the most difficult case, where $$\gamma_2 = \begin{bmatrix} 1 &1 &0 \\ 0 &1&x \\0 &0&1 
\end{bmatrix} \text{ and } x \neq 0.$$ 
In that case, we can show that  $\pi(\Lambda_{\Gamma})$ must contain the line $z =0$, as follows:

By Zariski density, there exists $x_0 \neq 0, y_0 \neq 0$ so that the projective point $ p = [x_0, y_0, 1] \in \pi(\Lambda_{\Gamma})$. We have 
$$\gamma_1^{n}\gamma_2^m (p) = [x_0 + my_0 + x \frac{m(m-1)}{2} + n, y_0 + mx, 1 ],$$
and by choosing appropriately sequences $m,n \to \infty$, we can guarantee that $\gamma_1^{n}\gamma_2^m (p)$ converge to a any given point in the line $z = 0$. Similarly, the limit set $\pi^{*}(\Lambda_{\Gamma})$ contains all lines passing through $(1,0,0)$, and therefore we can apply Proposition \ref{samicriterion} to show $\Lambda_{\Gamma} = \Flag(\R^3)$.

We should point out that if, in addition, $\Gamma \subset \SL_3(\Z)$, the results of Venkataramana \cite{Venkataramana} imply that $\Gamma$ is a lattice. And in case (i) and (ii) for the possibilities of $\gamma_2$, the results of Oh \cite{Oh_horospherical, Oh_horospherical1} imply that $\Gamma$ is a lattice (without assuming $\Gamma \subset SL_3(\Z)$).

Finally, we consider case (iii) at the beginning of the proof of Proposition \ref{prop:sl3full:case1}, where $\Delta$ is generated by 
 $$\gamma_1 = \begin{bmatrix} e^t &1 &0 \\ 0 &e^{t}&0 \\0 &0&e^{-2t} 
\end{bmatrix}, \quad   \gamma_2 = \begin{bmatrix} e^s &x &0 \\ 0 &e^{s}&0 \\0 &0&e^{-2s} 
\end{bmatrix},$$ where $(t,s) \neq (0,0)$, and $x \not\in \mathbb{Q}$. In this case, we can again use Proposition \ref{samicriterion} and show that $\pi(\Lambda_{\Gamma})$ contains the line $z = 0$ and $\pi^{*}(\Lambda_{\Gamma})$ contains all line passing through $(0,0,1)$.

This completes the proof of \Cref{prop:sl3full:case1}. 
\end{proof}

\subsection{Case \ref{sl3full:case2}: $\Gamma$ contains a standard $\SL(2,\Z)$.} The only remaining case to be proved is Case \ref{sl3full:case2} of Theorem \ref{sl3full}. This case follows from the following proposition, whose proof is similar to the previous ones. Therefore, we only outline the proof and leave the details to the reader.

\begin{proposition}\label{prop:reducible} Let $\Gamma \subset \SL(3,\R)$ be a discrete Zariski-dense subgroup. Consider the inclusion $\SL(2,\R) \to \SL(3,\R)$ induced by an inclusion $\R^2 \hookrightarrow \R^3$ as a subspace. 
Suppose that $\Gamma$ intersects $\SL(2,\R)$ in a lattice $\Delta \subset \SL(2,\R)$.
Then $\Gamma$ acts minimally on $\Flag(\R^3)$.
\end{proposition}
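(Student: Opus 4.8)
The plan is to follow the template of the earlier cases: first show that $\Gamma$ acts minimally on $\PP(\R^3)$, and then lift minimality to $\Flag(\R^3)$. Write $\R^3 = \R^2\oplus\R e_3$, so that $\Delta \coloneqq \Gamma\cap\SL(2,\R)$ fixes the point $x_0 \coloneqq [e_3]\in\PP(\R^3)$ and preserves the projective line $\ell_0 \coloneqq \PP(\R^2)$. Being a lattice, $\Delta$ is non-elementary with full limit set in $\PP(\R^2)$, hence acts minimally on $\ell_0$; equivalently, identifying a line through $x_0$ of the form $\PP(\R e_3\oplus\R v)$, $v\in\R^2$, with the point $[v]\in\ell_0$, the group $\Delta$ acts minimally on the pencil of projective lines through $x_0$. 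I would then record the dynamics of a hyperbolic $g\in\Delta$: on $\R^3$ it has eigenvalues $\lambda>1$, $\lambda^{-1}$, $1$ with eigenvectors $v^+, v^-, e_3$; on $\PP(\R^3)$ its attracting (resp. repelling) fixed point is $[v^+]$ (resp. $[v^-]$) in $\ell_0$, while $x_0$ is a saddle; for $x\notin L_g^+ \coloneqq \PP(\R v^+\oplus\R e_3)$ one has $g^{-n}x\to[v^-]\in\ell_0$; and, on the Grassmannian of projective lines, $g^{-n}\ell\to L_g^- \coloneqq \PP(\R v^-\oplus\R e_3)$ (the line joining $[v^-]$ to $x_0$) for every line $\ell$ not containing $[v^+]$. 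Finally, as $g$ ranges over the infinitely many hyperbolic elements of $\Delta$, the points $[v_g^\pm]$ are dense in $\ell_0$, and $\bigcap_g L_g^+ = \{x_0\}$.

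For the minimality of $\Gamma\acts\PP(\R^3)$, set $\Lambda^1_\Gamma \coloneqq \pi(\Lambda_\Gamma)$, where $\pi:\Flag(\R^3)\to\PP(\R^3)$ is the projection; this is a nonempty closed $\Gamma$-invariant subset, and it is not a single point (otherwise $\Gamma$ would have a global fixed point in $\PP(\R^3)$, contradicting Zariski density). So $\Lambda^1_\Gamma$ contains some $x\ne x_0$; choosing $g$ with $x\notin L_g^+$ and iterating gives $g^{-n}x\to[v^-_g]\in\ell_0$, whence $\Lambda^1_\Gamma\cap\ell_0\ne\emptyset$ and therefore $\ell_0\subset\Lambda^1_\Gamma$, by minimality of $\Delta\acts\ell_0$ and closedness. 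Using Zariski density, pick $\gamma_0\in\Gamma$ with $\gamma_0\ell_0\ne\ell_0$, so that $\gamma_0\ell_0\cap\ell_0$ is a single point $p_0$, and then pick a hyperbolic $g\in\Delta$ with $[v^+_g]\ne p_0$; then $[v^+_g]\notin\gamma_0\ell_0$, so a subsequence of $g^{-n}(\gamma_0\ell_0)\subset\Lambda^1_\Gamma$ Hausdorff-converges to the line $L_g^-$ through $x_0$, giving $L_g^-\subset\Lambda^1_\Gamma$. Applying $\Delta$ and using the minimality of its action on the pencil of lines through $x_0$, the set $\bigcup_{\delta\in\Delta}\delta L_g^-$ is a union of a dense family of lines through $x_0$ and is therefore dense in $\PP(\R^3)$; hence $\Lambda^1_\Gamma = \PP(\R^3)$. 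Since $\Lambda_\Gamma$ is the unique minimal closed $\Gamma$-invariant subset of $\Flag(\R^3)$ \cite{Benoist}, it follows that every $\Gamma$-orbit in $\PP(\R^3)$ is dense.

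To lift minimality to $\Flag(\R^3)$, I would argue as in the final lemma of the $\Z^2$ case. Because $\pi(\Lambda_\Gamma)=\PP(\R^3)$, there is $z\in\Lambda_\Gamma$ with $\pi(z)=x_0$; such a $z$ is a flag $(x_0,\ell)$ with $\ell$ a line through $x_0$, i.e. a point of the fiber $\pi^{-1}(x_0)$, which is $\Delta$-equivariantly identified with $\ell_0$. Since $\Delta$ fixes $x_0$ and acts minimally on this fiber, $\overline{\Delta z}=\pi^{-1}(x_0)$, so $\pi^{-1}(x_0)\subset\Lambda_\Gamma$ and hence $\pi^{-1}(\Gamma x_0)=\bigcup_{\gamma\in\Gamma}\gamma\,\pi^{-1}(x_0)\subset\Lambda_\Gamma$. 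Since $\Gamma x_0$ is dense in $\PP(\R^3)$ and $\pi$ is an open surjection, $\pi^{-1}(\Gamma x_0)$ is dense in $\Flag(\R^3)$, and closedness of $\Lambda_\Gamma$ then yields $\Lambda_\Gamma = \Flag(\R^3)$.

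The step I expect to be the main obstacle is the first one. Since $x_0$ is a saddle for the $\SL(2,\R)$-action on $\PP(\R^3)$ — its eigenvalue $1$ lying between $\lambda^{\pm1}$ — one cannot simply contract everything into $\ell_0$; instead the argument has to combine Zariski density of $\Gamma$ with the density of the attracting/repelling data $\{[v^\pm_g]\}$ of hyperbolic elements of $\Delta$ to guarantee that $\gamma_0\ell_0$ (and the Hausdorff limits of its $g^{-n}$-images) are in general position with the eigenplanes of $g$, so that the iteration genuinely sweeps out the whole pencil of lines through $x_0$ rather than degenerating onto a proper subvariety. Once this is established, the passage to the full flag variety is routine and parallels the earlier cases.
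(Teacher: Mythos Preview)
Your proposal is correct and follows essentially the same two-step template the paper sketches: first show $\Gamma$ acts minimally on $\PP(\R^3)$ using that $\Lambda^1_\Gamma$ contains $\ell_0$, Zariski density, and the dynamics of $\Delta$ to sweep out all of $\PP(\R^3)$; then lift to $\Flag(\R^3)$ by showing the fiber $\pi^{-1}(x_0)$ lies in $\Lambda_\Gamma$ via the minimal $\Delta$-action on it. The only minor difference is that the paper obtains $\ell_0\subset\Lambda^1_\Gamma$ directly (as the projection of attracting flags of hyperbolic elements of $\Delta$, which are loxodromic in $\SL(3,\R)$), whereas you deduce it dynamically; since the paper gives only a sketch, your version simply fills in the details along the intended lines.
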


\begin{proof} This is again a consequence of Proposition \ref{samicriterion}. The limit set $\pi(\Lambda_{\Gamma})\subset \PP(\R^3)$ must contain a projective line (the limit set of $\pi(\Delta)$), and similarly, one shows that $\pi^{*}(\Lambda_{\Gamma})$ must contain a projective line and Proposition \ref{samicriterion} applies.


After this, one may use the action of $\G$ to conclude easily that $\Lambda_\G = \Flag(\R^3)$.
\end{proof}

The analog of \Cref{prop:reducible} also holds if one replaces the reducible embedding of $\SL(2,\R)$ into $\SL(3,\R)$ by the irreducible one. (We would like to thank Sami Douba for pointing this out and explaining the proof.)

\begin{proposition}
    Any discrete Zariski-dense subgroup $\Gamma \subset \SL(3,\R)$, intersecting $\operatorname{SO}(2,1)$ in a lattice, must act minimally on $\Flag(\R^3)$.
\end{proposition}

\begin{proof}[Proof sketch]
$\operatorname{SO}(2,1)$ preserves a closed disk $\mathbb{D} \subset \mathbb{P}(\mathbb{R}^3)$. The limit set in $\Flag(\mathbb{R}^3)$ of any lattice in $\operatorname{SO}(2,1)$ is
\[
\Sigma \coloneqq \{ (p,l) :\ p \in \partial \mathbb{D},\, l \text{ tangent to } \partial \mathbb{D} \text{ at } p \}.
\]
Clearly, $\Sigma \subset \Lambda_\Gamma$.

Using Zariski density of $\Gamma$, one finds $\gamma \in \Gamma$ such that $\mathbb{D}$ and $\gamma \mathbb{D}$ intersect but neither contains the other. For a suitable hyperbolic element $h \in \gamma \Delta \gamma^{-1}$, where $\Delta \coloneqq \Gamma \cap \operatorname{SO}(2,1)$, with fixed points $h^\pm$ in the arc $\partial (\gamma \mathbb{D}) \smallsetminus \mathbb{D}$, the accumulation set of $h^n(\Sigma)$ as $n\to\infty$ contains the fiber $\pi^{-1}(h^+)$, which must lie in $\Lambda_\Gamma$. 
Similarly, $\Lambda_\Gamma$ also contains a full fiber of the dual projection $\pi^*: \Flag(\mathbb{R}^3) \to \mathbb{P}((\mathbb{R}^3)^*)$. 
The conclusion then follows from \Cref{samicriterion}.
\end{proof}

\subsection{Final remarks}
 Let $a$ be a diagonal matrix in $G = \SL(3,\R)$ with positive diagonal entries and precisely two equal eigenvalues.
 Let $H= \< a,N_a\>\subset G$, where $N_a$ is the stable horocyclic subgroup corresponding to $a$. 
 One can identify $G/H$ with the homogeneous space $$S = \{(F_1,F_2, F_3) :\ F_i = (l_i,V_i) \in \Flag(\R^3) \text{ are pairwise distinct such that } l_1 = l_2 = l_3 \},$$ i.e., the set of all pairwise distinct triples of complete flags in $\R^3$ sharing the same one-dimensional subspace. Using similar arguments as above, one can prove the following:

\begin{proposition}\label{sl3full2} Suppose that a subgroup $\Gamma < \SL(3,\Z)$, which is Zariski-dense in $G = \SL(3,\R)$, satisfies one of the following:
\begin{enumerate}
\item $\Gamma$ contains a subgroup isomorphic to $\Z^2$.
\item $\Gamma$ contains an infinite order element with at least one complex eigenvalue.
\end{enumerate}
Then, $\Gamma$ acts minimally on $G/H$. (Equivalently, $H$ acts minimally on $\Gamma\backslash G$). 
\end{proposition}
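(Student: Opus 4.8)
The plan is to follow the scheme of \Cref{criterion:SL3} and \Cref{prop:sl3full:case1} one level higher, using the $G$-equivariant fibration $\pi\colon S\to\PP(\R^3)$ that sends a triple $(F_1,F_2,F_3)\in S$ to the line $\ell$ shared by the three flags. This $\pi$ is a continuous open surjection whose fiber $\pi^{-1}(\ell)$ is canonically the configuration space $\op{Conf}_3\!\big(\PP(\R^3/\ell)\big)$ of ordered triples of pairwise distinct points of a projective line. First I would record that $\Gamma$ acts minimally on the base $\PP(\R^3)$: this is \Cref{lem:two} when $\Gamma$ contains an infinite-order element with a non-real eigenvalue, and \Cref{p3min} when $\Gamma$ contains a copy of $\Z^2$ all of whose elements are semisimple --- which, since $\Gamma<\SL(3,\Z)$, is automatic as soon as the $\Z^2$ is not unipotent. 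In the one remaining possibility, a unipotent $\Z^2$, Benoist--Oh \cite[Cor.~1.3]{Benoist-Oh} forces $\Gamma$ to be a lattice in $G$, hence a finite-index subgroup of $\SL(3,\Z)$, and such a subgroup contains infinite-order elements with non-real eigenvalues, so that case is already covered; thus I may assume base minimality.

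Granting base minimality, the problem reduces to producing one full fiber inside a single orbit closure. Indeed, fixing $p\in S$ one has $\pi(\overline{\Gamma p})=\overline{\Gamma\,\pi(p)}=\PP(\R^3)$, so $\overline{\Gamma p}$ meets every fiber, and if $\pi^{-1}(\ell_1)\subseteq\overline{\Gamma p}$ for some $\ell_1$, then $\Gamma$-invariance and openness of $\pi$ give $\overline{\Gamma p}\supseteq\overline{\pi^{-1}(\Gamma\ell_1)}=\pi^{-1}\big(\overline{\Gamma\ell_1}\big)=S$. So it is enough to fill one fiber $\pi^{-1}(x_0)$ for a conveniently chosen $x_0$.

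To get a foothold inside the fiber I would use the distinguished subgroup. In the complex-eigenvalue case, let $\g_0\in\Gamma$ have eigenvalue $\lambda e^{2\pi i\theta}$ with $\lambda>1$ and $\theta$ irrational (automatic for infinite-order $\g_0\in\SL(3,\Z)$, exactly as in \Cref{criterion:SL3}), with $1$-dimensional invariant subspace $W$ and $2$-dimensional invariant subspace $V$; set $x_0\coloneqq\PP(W)$. Then $\g_0$ fixes $x_0$, preserves $V$, and acts on $\PP(\R^3/W)\cong\PP(V)$ as the irrational rotation $R_\theta$, so $\pi^{-1}(x_0)\cong\op{Conf}_3(\PP(V))$ is $\langle\g_0\rangle$-invariant with $\g_0$ acting diagonally as $R_\theta$; choosing any $q\in\overline{\Gamma p}\cap\pi^{-1}(x_0)$ (nonempty by the reduction step), the circle $\overline{\langle\g_0\rangle q}$ --- all triples gotten from $q$ by rotating the three points through a common angle --- lies in $\overline{\Gamma p}$. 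In the $\Z^2$ case I would take $x_0\coloneqq[0,0,1]$ and use that non-singularity of $\Delta$ makes $\alpha_{1,2}|_{\log\Delta}$ have dense image, so that $\Delta$ acts on $\pi^{-1}(x_0)\cong\op{Conf}_3(\PP(\langle e_1,e_2\rangle))$ through a $1$-parameter torus, and $\overline{\Delta q}$ for $q\in\overline{\Gamma p}\cap\pi^{-1}(x_0)$ is a $1$-dimensional subset of $\overline{\Gamma p}$.

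The remaining and hardest step is to enlarge this curve to the whole fiber $\pi^{-1}(x_0)\cong\op{Conf}_3(\PP^1)$; this is where the situation genuinely differs from \Cref{criterion:SL3} (there the fiber of $\Flag(\R^3)\to\PP(\R^3)$ is a single $\PP^1$, filled by $\langle\g_0\rangle$ outright), and it is the $G/H$-analogue of the Zariski-density bootstrap that, in \Cref{p3min} and in the last lemma of \Cref{prop:sl3full:case1}, upgrades a $\Delta$-invariant triangle --- respectively a $\Delta$-invariant closed subset of a $\PP^1$-fiber --- to everything, only now two extra dimensions must be filled. The mechanism, as in \Cref{lem:one}, is that $\overline{\Gamma p}\cap\pi^{-1}(x_0)$ is closed and $\langle\g_0\rangle$- (resp.\ $\Delta$-) invariant, and that given a configuration $c$ in it and, by Zariski-density of $\Gamma$, an element $\g\in\Gamma$ with $\g x_0$ off the expanding subspace ($\PP(V)$, resp.\ $\PP(\langle e_1,e_2\rangle)$) and otherwise in general position, the translate $\g c$ can be contracted back onto $\pi^{-1}(x_0)$ by the powers $\g_0^{-n}$ (resp.\ by a sequence in $\Delta$ contracting $\PP(\R^3)$ onto $x_0$); a short computation in $\PP(\Lambda^2\R^3)$ shows the three planes of $\g c$ stay pairwise distinct in the limit (this is exactly where the general position of $\g x_0$ enters), so one reaches a new configuration over $x_0$ on which the fiberwise rotation (resp.\ scaling) of the special subgroup can again act. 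Iterating these moves places, inside $\overline{\Gamma p}\cap\pi^{-1}(x_0)$, the orbit of $c$ under the subsemigroup of $\op{PGL}(\R^3/W)\cong\op{PGL}_2(\R)$ generated by the rotation group $\SO(2)$ (resp.\ the split torus) together with the projective maps $M_\g$ induced by the contractions; and since $\g\mapsto M_\g$ extends to a dominant map $G\dashrightarrow\op{PGL}_2(\R)$ (already onto when $\g$ ranges over the parabolic fixing $x_0$), the family $\{M_\g:\g\in\Gamma\}$ is Zariski-dense. The point I expect to require the most care --- the real technical heart --- is then to deduce that this subsemigroup has a dense orbit in $\op{Conf}_3(\PP^1)$; once that is done, $\pi^{-1}(x_0)\subseteq\overline{\Gamma p}$, and the reduction step completes the proof.
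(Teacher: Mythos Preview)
The paper offers no proof here beyond the remark that it follows by ``similar arguments as above,'' and your outline is precisely the natural way to make that remark precise: the $G$-equivariant fibration $\pi:S\to\PP(\R^3)$ with fiber $\op{Conf}_3(\PP^1)$, base minimality from \Cref{lem:two} or \Cref{p3min} (with Benoist--Oh disposing of the unipotent $\Z^2$), the reduction to filling a single fiber $\pi^{-1}(x_0)$, and the identification of the induced fiber maps $M_\g\in\op{PGL}(\R^3/W)$ are all correct. Your flagged density step can in fact be completed cleanly in the complex-eigenvalue case: the closure in $\op{PGL}_2(\R)$ of the semigroup generated by $\op{SO}(2)$ and the $M_\g$ is $\op{SO}(2)$-bi-invariant, hence of the form $\op{SO}(2)\cdot T\cdot \op{SO}(2)$ for a closed $T\subset A^+$, and since the Cartan part of $a_{t_1}\,k\,a_{t_2}$ sweeps out $[\,|t_1-t_2|,\,t_1+t_2\,]$ as $k$ runs over $\op{SO}(2)$, a single $t_0>0$ in $T$ forces $T=[0,\infty)$; thus the closure already contains all of $\op{PSL}_2(\R)$.

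There is one additional subtlety you do not flag: $\op{Conf}_3(\PP^1)$ has two connected components (the two cyclic orders), and Zariski-density of $\{M_\g:\g\in\Gamma\}$ says nothing about which \emph{real} component of $\op{PGL}_2(\R)$ these maps land in, so the argument above a priori fills only one component of the fiber. The fix is short: the $G$-equivariant map $S\to S^2$ sending a configuration over $\ell$ to its cyclic order (equivalently, an orientation of $\R^3/\ell$, equivalently of $\ell$) realizes the component data as the orientation double cover $S^2\to\PP(\R^3)$; this cover has no continuous section, so minimality of $\Gamma$ on $\PP(\R^3)$ forces minimality on $S^2$, hence $\overline{\Gamma p}$ meets both components of $\pi^{-1}(x_0)$, and you can rerun the fiber argument in the second component. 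In the $\Z^2$ case the same template applies, but the density computation with the split torus replacing $\op{SO}(2)$ is genuinely different and will need its own treatment.
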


Finally, as evidence for a positive answer to \Cref{ques:minimal_lattice} in the case $G = \operatorname{SL}(3,\mathbb{R})$, we conclude this section with the following result, which shows that limits of Anosov surface groups in $G$ never act minimally on $\Flag(\mathbb{R}^3)$, indicating that the behavior of limits of Anosov surface groups in $\operatorname{SL}(3,\mathbb{R})$ differs significantly from the ones in $\operatorname{PSL}(2,\mathbb{C})$.

\begin{proposition}
 Let $S$ be a closed orientable surface of genus at least $2$ and let $\rho_n : \pi_1(S) \to \PSL(3,\R)$ be a sequence of Anosov representations algebraically converging to a representation $\rho_\infty : \pi_1(S) \to \PSL(3,\R)$. 
 Then $\rho_\infty(\pi_1(S))$ does not act minimally on $\Flag(\R^3)$.
\end{proposition}

\begin{proof}
It is known that $\rho_\infty$ is discrete and faithful. 
If $\rho_\infty$ is not Zariski-dense, then its image lies in a proper algebraic subgroup of $\PSL(3,\R)$ and hence cannot act minimally on $\Flag(\R^3)$.
So, we can assume that $\rho_\infty$ is Zariski-dense. Then, there exists $\gamma_0\in\pi_1(S)$ such that $\rho_\infty(\gamma_0)$ is loxodromic (by \cite{Benoist}). Let $x^\pm\in \Flag(\R^3)$ be the attractive and repulsive fixed points of $\rho_\infty(\gamma_0)$. Since $\rho_n(\gamma_0)\to \rho_\infty(\gamma_0)$ as $n\to\infty$, there exists a sequence $b_n$ in $\PSL(3,\R)$ such that $b_n\to 1$ and for all $n\in\N$, the attractive and repulsive fixed points of $b_n\rho_n(\gamma_0)b_n^{-1}$ are $x^\pm$. Let $\rho'_n: \pi_1(S) \to \PSL(3,\R)$ be given by $\rho'_n(\gamma) = b_n \rho_n(\gamma) b_n^{-1}$, $\gamma\in\Gamma$. Since $b_n\to 1$, it follows that $\rho'_n\to \rho_\infty$ as $n\to\infty$. Also, since $\rho'_n(\pi_1(S))$ is Anosov, its limit set $\Lambda_{\rho'_n(\pi_1(S))}$ is a topological circle in $\Flag(\R^3)$, which passes through $x^\pm$.

The set of points $\mathcal{C}(\{x^-,x^+\})$ in $\Flag(\R^3)$ decomposes into six connected components (see \cite{SSV}). Since $\Lambda_{\rho'_n(\pi_1(S))} \smallsetminus \{x^-,x^+\}$ has only two connected components and $\Lambda_{\rho'_n(\pi_1(S))} \smallsetminus \{x^-,x^+\}\subset \mathcal{C}(\{x^-,x^+\})$, after passing to a subsequence, we can assume that there exists a connected component $\Omega$ of $\mathcal{C}({x^-,x^+})$ not intersected by $\Lambda_{\rho'_n(\pi_1(S))}$ for all $n$.

To finish the proof, we show that $\Omega\cap \Lambda_{\rho_\infty(\pi_1(S))} = \emptyset$. Indeed, the set of attractive points of loxodromic elements of $\rho_\infty(\pi_1(S))$ is dense in $\Lambda_{\rho_\infty(\pi_1(S))}$ (by \cite{Benoist}). If, to the contrary, $\Omega\cap \Lambda_{\rho_\infty(\pi_1(S))} \ne \emptyset$, then there exists $\gamma_1\in\pi_1(S)$ such that $\rho_\infty(\gamma_1)$ is loxodromic with attractive fixed point $x_1^+$ lying in $\Omega$. Since $\rho_n(\gamma_1)\to \rho_\infty(\gamma_1)$ as $n\to\infty$, it follows that the attractive fixed points of $\rho_n(\gamma_1)$ eventually enter $\Omega$, a contradiction.
\end{proof}

\appendix
\section{\texorpdfstring{Zariski-dense discrete subgroups of $\SL(n,\R)$ with some Jordan projections lying on the boundary of the limit cone}{}}

In this appendix, $G$ denotes the group $\SL(n,\R)$, $n\ge 3$. 
For $g\in G$, let $\mu_1(g)\ge\dots\ge\mu_n(g)$ (resp. $\lambda_1(g)\ge \dots\ge \lambda_n(g)$) denote the the logarithms of the singular values (resp. of the moduli of the eigenvalues) of $g$.
Using the usual identification of $\mathfrak{a}$ with $\{ (x_1,\dots,x_n) :\ \sum_i x_i = 0\} \subset \R^n$, $\mu(g) = (\mu_1(g), \dots, \mu_n(g)) \in \mathfrak{a}^+$ is the Cartan projection of $g$ (cf. \S\ref{sec:notation}). 
Moreover, the Jordan projection $\lambda: G \to \mathfrak{a}^+$ is defined by $\lambda(g) = (\lambda_1(g), \dots, \lambda_n(g))$.

If $\Delta$ is a Zariski-dense subsemigroup of $G$, then
the set of loxodromic\footnote{An element $g\in \SL(n,\R)$ is called {\em loxodromic} if moduli of the eigenvalues of $g$ are pairwise distinct or, equivalently, $\lambda(g)\in\operatorname{int}\mathfrak{a}^+$.} elements of $\Delta$ is also Zariski-dense in $G$ (see \cite[Prop. 6.11]{Benoist-Quint-Book}).
Benoist \cite{Benoist} associated with $\Delta$ a closed cone $\LL_\Delta \subset \mathfrak{a}^+$, called the \textit{limit cone} of $\Delta$, which is defined as the smallest closed cone in $\mathfrak{a}^+$ containing the Jordan projections of the loxodromic elements of $\Delta$.
He showed that the limit cone $\LL_\Delta$ is convex and has a nonempty interior.
The limit cone captures many important dynamical information concerning the action of $\Delta$ on various homogeneous spaces of $G$.
However, the boundary of the limit cone appears to be a bit mysterious object.

\medskip

The goal of this appendix to prove the following result:

\begin{theorem}\label{thm:lc_bd}
    Let $\Delta$ be a Zariski-dense subgroup of $G = \SL(n,\R)$, where $n\ge 3$. Then, there exist loxodromic elements $a,b\in \Delta$ such that the following hold:
    \begin{enumerate}
        \item $\G\coloneqq \< a,b\>$ is a Zariski-dense Anosov subgroup of $G$.
        \item The Jordan projection of $a$ lies on the boundary of the limit cone $\LL_{\G}$.
    \end{enumerate}
\end{theorem}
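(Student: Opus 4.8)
The plan is to build the pair $\{a,b\}$ by first fixing a loxodromic element $a\in\Delta$ whose Jordan projection $\lambda(a)$ lies on the boundary $\partial\LL_\Delta$ of the limit cone of the ambient group $\Delta$, and then to adjoin a second loxodromic element $b\in\Delta$, replaced by a sufficiently high power, so that the ping-pong / Combination Theorem machinery developed in \S\ref{mainconstruction} (in particular \Cref{prop:anosov}) produces a Zariski-dense Anosov free group $\G=\<a,b^m\>$ while keeping $\lambda(a)$ on $\partial\LL_\G$. The first step is to locate such an $a$: since $\LL_\Delta$ is a closed convex cone with nonempty interior in $\mathfrak{a}^+$, its boundary is nonempty (and in fact $\LL_\Delta\ne\mathfrak a^+$ is not needed — even if $\LL_\Delta=\mathfrak a^+$, $\partial\mathfrak a^+\cap\operatorname{int}\mathfrak a^+=\emptyset$ forces a different argument, so one should note that $\LL_\Delta$ always has boundary rays lying in $\operatorname{int}\mathfrak a^+$ when it is a proper subcone; the relevant case for $n\ge 3$ is that $\LL_\Delta$ is a proper convex cone, which holds for many but not all Zariski-dense $\Delta$). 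Concretely I would take a supporting hyperplane $\phi\in\mathfrak a^*$ of $\LL_\Delta$, i.e.\ $\phi\ge 0$ on $\LL_\Delta$ with $\phi$ vanishing on some nonzero $v\in\LL_\Delta$; by density of Jordan projections of loxodromic elements and a limiting/perturbation argument one finds a loxodromic $a\in\Delta$ with $\lambda(a)$ on the ray $\R_{\ge 0}v$, or at least with $\phi(\lambda(a))$ arbitrarily small — and then by a Benoist-type density statement one can actually realize a loxodromic element whose Jordan projection lies exactly on this boundary ray (this is where I would invoke the fine structure of $\LL_\Delta$ from Benoist \cite{Benoist}).

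The second step is to choose $b$. Using Zariski-density of $\Delta$ and \cite[Prop. 6.11]{Benoist-Quint-Book}, pick a loxodromic $b_0\in\Delta$ whose attracting/repelling flags are in general position with respect to those of $a$ and such that $\<a,b_0\>$ is already Zariski-dense in $G$ (a generic choice works since the set of $b_0$ for which $\<a,b_0\>$ fails to be Zariski-dense is contained in a proper subvariety, using that the Zariski closure of $\<a\>$ is a torus and $G$ is not generated by a torus and one extra loxodromic only for special position). Then apply the ping-pong construction of \S\ref{sec:ping-pong}: shrinking balls $B^\pm$ around the fixed points of $a$ and $b_0$ and passing to high powers $a^{k}, b_0^{m}$, the Combination Theorem \cite[Thm. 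A]{DK23} (as in \Cref{prop:anosov}) gives that $\G:=\<a^{k},b_0^{m}\>$ is an Anosov free group for $k,m$ large; it remains Zariski-dense since it contains $a^k$ and $b_0^m$ and finite-index-in-cyclic replacements do not shrink the Zariski closure of $\<a,b_0\>$ (the Zariski closure of $\<a^k,b_0^m\>$ equals that of $\<a,b_0\>=G$, as $\overline{\<g^k\>}^{Zar}$ has finite index in $\overline{\<g\>}^{Zar}$). Note that I must take $a$ itself (not a power) as one generator to keep $\lambda(a)$ available — so I should instead run the Combination Theorem with generators $a$ and $b_0^{m}$, which is fine since the contraction hypotheses only require one of the two generators to have high "translation length" relative to the chosen balls, and one can first enlarge $\Delta$-freely by noting $a$ already has fixed points; alternatively replace $a$ by $a$ and observe $\lambda(a)$ and $\lambda(a^N)=N\lambda(a)$ lie on the same ray, so even if the construction forces a power $a^N$, its Jordan projection stays on $\partial\LL_\Delta$.

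The third step is the containment $\partial\LL_\G\ni\lambda(a)$. Since $\G\subset\Delta$ we have $\LL_\G\subseteq\LL_\Delta$, hence the supporting functional $\phi$ with $\phi\ge 0$ on $\LL_\Delta$ also satisfies $\phi\ge 0$ on $\LL_\G$; as $\lambda(a)\in\LL_\G$ and $\phi(\lambda(a))=0$, the functional $\phi$ supports $\LL_\G$ at $\lambda(a)$, so $\lambda(a)\in\partial\LL_\G$ provided $\lambda(a)\ne 0$, which holds since $a$ is loxodromic. This is the easy direction. The main obstacle, and the part requiring the most care, is Step 1: producing a loxodromic element of $\Delta$ whose Jordan projection lies \emph{exactly} on the boundary ray of $\LL_\Delta$ (as opposed to merely close to it), and then ensuring this exact element can serve as a ping-pong generator. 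One route around an ``exactness'' difficulty is to not demand $a\in\Delta$ a priori realize the boundary, but rather to construct $\G$ first as an Anosov subgroup containing a well-chosen loxodromic $a$, compute that $\LL_\G$ is a (small) convex cone, and then perturb the construction so that $\lambda(a)$ is forced to an extreme ray of $\LL_\G$ — for instance by arranging that all other generators have Jordan projections on one side of the hyperplane $\{\phi=0\}$ while $\lambda(a)$ sits on it, and invoking convexity of $\LL_\G$ (Benoist) to conclude every element's Jordan projection satisfies $\phi\ge 0$, with equality only along $\R_{\ge0}\lambda(a)$. Verifying that $\phi$ stays nonnegative on $\LL_\G$ after the perturbation — i.e.\ that adding $b$ does not drag the limit cone across the hyperplane — is the delicate estimate, and I expect it to follow from a quantitative version of the ping-pong word estimates (\Cref{lem:wl}) showing $\phi(\lambda(w))\ge -o(|w|)$ for reduced words $w$, combined with the fact that $\phi(\lambda(a))=0$ and $\phi(\lambda(b))$ can be made as close to its target sign as desired by choosing $b$ appropriately inside $\Delta$.
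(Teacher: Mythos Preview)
Your Step 1 contains a genuine gap that you yourself flag but do not resolve. You need a loxodromic $a\in\Delta$ with $\lambda(a)\in\partial\LL_\Delta$, but no such element need exist: if $\Delta$ is a lattice then $\LL_\Delta=\mathfrak{a}^+$, so $\partial\LL_\Delta=\partial\mathfrak{a}^+$ consists entirely of singular directions, and \emph{no} loxodromic element has its Jordan projection there. Benoist's density results do not help either --- they say Jordan projections of loxodromic elements are dense in $\LL_\Delta$, not that any of them land exactly on the boundary. Your proposed workaround (choose a hyperplane $\{\phi=0\}$ through $\lambda(a)$ and then show $\phi\ge 0$ on $\LL_\G$ via word estimates) is in the right spirit, but you never specify how to pick $a$ and $\phi$ simultaneously so that $\phi(\lambda(a))=0$ while $\phi(\lambda(w))\ge 0$ holds for all reduced $w$; the vague bound $\phi(\lambda(w))\ge -o(|w|)$ you suggest would not suffice for individual $w$.

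The paper bypasses this entirely by never asking $a$ to be extremal in $\Delta$. Instead one picks \emph{any} loxodromic $a_1\in\Delta$ with $\lambda_{1,2}(a_1)\ne\lambda_{n-1,n}(a_1)$ (possible since $\LL_\Delta$ has nonempty interior), and then a second loxodromic $b_1\in\Delta$ in general position, outside a Tits-type bad set (to force Zariski-density of $\langle a_1^l,b_1^k\rangle$), and satisfying the single scalar inequality $\max\bigl\{\lambda_{1,2}(b_1^{\pm1})/\lambda_{n-1,n}(b_1^{\pm1})\bigr\} < \max\bigl\{\lambda_{1,2}(a_1^{\pm1})/\lambda_{n-1,n}(a_1^{\pm1})\bigr\}$. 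After passing to powers $a=a_1^l$, $b=b_1^k$ so that ping-pong and \Cref{lem:wl} apply (and replacing $b$ by yet higher powers to absorb the additive constant $C$), the two-sided estimates on $\mu_{j,j+1}(w)$ yield, via a mediant inequality, $\lambda_{1,2}(w)/\lambda_{n-1,n}(w) \le \max\bigl\{\lambda_{1,2}(a^{\pm1})/\lambda_{n-1,n}(a^{\pm1})\bigr\}$ for every $w\in\G$. Thus the functional $\alpha_{1,2}-c\,\alpha_{n-1,n}$ (with $c$ this maximum) supports $\LL_\G$ exactly at $\lambda(a)$. The key idea you are missing is that the supporting hyperplane is \emph{manufactured from $a$ itself}, not inherited from $\LL_\Delta$: one only needs $a$ to be more extreme than $b$ in a single fixed direction, and such a pair is easy to find in any Zariski-dense $\Delta$.
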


We briefly discuss some importance of studying the boundary of the limit cone. Let $P = MAN$ be a Langlands decomposition of the minimal parabolic subgroup $P$.
If $\G$ is a Zariski-dense discrete subgroup of $G$, then the set $E_\G = \{[g]\in \Gamma\backslash G :\ gP\in \Lambda_\G\}$ is the unique $P$-minimal subset of $\Gamma\backslash G$. 
For example, if $\G$ has a full limit set in $G/P$ (such as when $\G$ is a lattice or a discrete group as described in \Cref{thm:main}), then $E_\G = \Gamma\backslash G$.
It is an intriguing prospect to study the $NM$-orbit closures;
for instance,  when does $NM$ act minimally  on $E_\G$?
Landesberg-Oh \cite{LANDESBERG_OH_2024} showed a connection between this question and the investigation of \textit{horospherical} limit points of $\G$.
They also proved that if there exists a loxodromic element $\g\in \G$ whose Jordan projection lies on the boundary of $\LL_\G$, then the $NM$-action on $E_\G$ is non-minimal.
Using Thurston's work \cite{thurston1998minimal}, they identified numerous such subgroups $\G$ in $\operatorname{PSL}(2,\R)\times \operatorname{PSL}(2,\R)$. 
In this regard, \Cref{thm:lc_bd} provides many examples of such subgroups in $\SL(n,\R)$, $n\ge 3$.

We now give a proof of \Cref{thm:lc_bd}. 
Throughout the proof, we adopt the following notation: For $g\in G$ and $1\le i,j\le n$,
\[
 \mu_{i,j}(g) \coloneqq \mu_i(g) - \mu_j(g), 
 \quad \lambda_{i,j}(g) \coloneqq \lambda_i(g) - \lambda_j(g).
\]
The main ingredient in the proof is \Cref{lem:wl} and part of our argument follows \cite[Sect. 4.3]{Benoist}.

\begin{proof}[Proof of \Cref{thm:lc_bd}]
    Since the limit cone has a nonempty interior, there exists a loxodromic element $a_1 \in \Delta$ such that $\lambda_{1,2}(a_1) \neq \lambda_{n-1,n}(a_1)$. We replace $a_1$ with $a_1^2$, if needed, to further assume that all eigenvalues of $a_1$ are positive. 
    Since conjugations by elements of $G$ does not affect the Jordan projections, for convenience, after conjugating $\Delta$ by some $g \in G$, we can (and will) assume that $a_1 \in A^+$. In this case, $\lambda_{j,j+1}(a^l_1) = \mu_{j,j+1}(a^l_1)$ for all $j=1,\dots,n-1$ and $l\in\Z$.
    
    By \cite[Prop. 4.4]{Tits_free}, for all integers $l\in\N$, the union of all Zariski-closed and Zariski-connected proper subgroups of $G$ containing $a^l_1$ is contained in a proper Zariski-closed subset $F\subset G$.
    Let $b_1\in \Delta$ be another loxodromic element (with positive eigenvalues) such that:
    \begin{enumerate}[label=(\alph*)]
        \item\label{cdn1:thm:lc_bd} 
        $b_1\not\in F$,

        \item\label{cdn2:thm:lc_bd} 
        $\max\Bigl\{\frac{\lambda_{1,2}(b_1^{\pm1})}{\lambda_{n-1,n}(b_1^{\pm1})}\Bigr\} 
        <
        \max\Bigl\{\frac{\lambda_{1,2}(a_1^{\pm1})}{\lambda_{n-1,n}(a_1^{\pm1})}\Bigr\}$, and
        \item\label{cdn3:thm:lc_bd} 
        if $x^\pm, y^\pm \in G/P$ are the attractive/repulsive fixed points of $a_1,b_1$, respectively, then $x^\pm, y^\pm$ are pairwise in general position.
    \end{enumerate}
    
    Condition \ref{cdn1:thm:lc_bd} ensures that for all $l,k\in\N$, the subgroup $\< a_1^l,b_1^k\>$ is a Zariski-dense subgroup of $\SL(n,\R)$. 
    We can (and will) choose sufficiently large $k$ and $l$ so that $\<a_1^l,b_1^k\>$ is an Anosov subgroup of $G$, which is naturally isomorphic to the free group on two generators; see \cite{DK23}. Pick even larger $k$ and $l$, if needed, so that we can\footnote{The only essential fact needed in the proof of this lemma is the existence of closed neighborhoods $D_{x}$ and $D_{y}$ of $\{x^\pm\}$ and $\{y^\pm\}$, respectively, which are in general position to each other and where the cyclic groups $\<a_1^l\>$ and $\<b_1^k\>$ can play ping-pong. At the expense of choosing larger values of $k$ and $l$, such neighborhoods can be constructed easily due to condition \ref{cdn3:thm:lc_bd} above; see, for example, \cite[Rem. 6.4]{DK23}.} 
    apply \Cref{lem:wl} to obtain the following: 
    If 
    $a \coloneqq a_1^l$ and $b\coloneqq b_1^k$,
    then there exists $C>0$ such that if $w = a^{p_r}b^{q_r}\cdots a^{p_1}b^{q_1}$ is any reduced word in the letters $a$ and $b$, then for all $j=1,\dots,n-1$,
\begingroup
\makeatletter\def\f@size{10}\check@mathfonts
    \begin{align}\label{eqn:apn:1}
    \begin{split}
        \sum_{i=1}^r\mu_{j,j+1}(a^{p_i})
        + \sum_{i=1}^r\mu_{j,j+1}(b^{q_i})
        - Cr
        \LE
        \mu_{j,j+1}(w)
        \LE
        \sum_{i=1}^r\mu_{j,j+1}(a^{p_i})
        + \sum_{i=1}^r\mu_{j,j+1}(b^{q_i})
        + Cr.
        \end{split}
    \end{align}
\endgroup
    Let us further assume that $p_r, q_1 \ne 0$ (in particular, $w$ is cyclically reduced).
    Then applying \eqref{eqn:apn:1} to $w^m$ and using the fact $\lambda_{j,j+1}(w) = \lim_{m\to\infty}(\mu_{j,j+1}(w^m))/m$, we obtain
\begingroup
\makeatletter\def\f@size{10}\check@mathfonts
    \begin{align}\label{eqn:apn:2}
    \begin{split}
        \sum_{i=1}^r\lambda_{j,j+1}(a^{p_i})
        + \sum_{i=1}^r\left(\mu_{j,j+1}(b^{q_i})
        - C\right)
        \LE
        \lambda_{j,j+1}(w)
        \LE
        \sum_{i=1}^r\lambda_{j,j+1}(a^{p_i})
        + \sum_{i=1}^r\left(\mu_{j,j+1}(b^{q_i})
        + C\right).
        \end{split}
    \end{align}
\endgroup
    Replacing $b$ by a suitably large power of it, we can (and will) assume that $\left(\mu_{j,j+1}(b^{q_i})- C\right) > 0$ for all $j=1,\dots,n-1$ and $q_i \in\Z\smallsetminus\{0\}$.
    Therefore, 
    \begin{align}\label{eqn:apn:3}
        \begin{split}
        \frac{\lambda_{1,2}(w)}{\lambda_{n-1,n}(w)}
        &\LE 
        \frac{\sum_{i=1}^r\lambda_{1,2}(a^{p_i})
        + \sum_{i=1}^r\left(\mu_{1,2}(b^{q_i})
        + C\right)}{\sum_{i=1}^r\lambda_{n-1,n}(a^{p_i})
        + \sum_{i=1}^r\left(\mu_{n-1,n}(b^{q_i})
        - C\right)} \\
        &\LE \max\biggl\{ \frac{\lambda_{1,2}(a^{p_i})}{\lambda_{n-1,n}(a^{p_i})}, \,  \frac{\mu_{1,2}(b^{q_i})
        + C}{\mu_{n-1,n}(b^{q_i})
        - C} :\ i=1,\dots, r\biggr\}\\
        &\LE \max \biggl\{ \biggl(\frac{\lambda_{1,2}(a)}{\lambda_{n-1,n}(a)}\biggr)^{\pm 1}
        ,\, \frac{\mu_{1,2}(b^{q_i})
        + C}{\mu_{n-1,n}(b^{q_i})
        - C} :\ i=1,\dots, r\biggr\}.
        \end{split}
    \end{align}
    In the second inequality above, we used the following fact: If $c_1,\dots,c_k,d_1,\dots,d_k$ are positive real numbers, then 
    $\frac{c_1+\dots+c_k}{d_1+\dots+d_k} \le \max\bigl\{\frac{c_1}{d_1},\dots,\frac{c_k}{d_k} \bigr\}$.
    Since
    \[
     \frac{\mu_{1,2}(b^{\pm m})
        + C}{\mu_{n-1,n}(b^{\pm m})
        - C} \to 
        \frac{\lambda_{1,2}(b^{\pm1})}{\lambda_{n-1,n}(b^{\pm1})}
        \quad\text{as } m\to\infty.
    \]
    after replacing $b$ (again) by a large enough power of it, we can (and will) assume that 
    \[
     \frac{\mu_{1,2}(b^{\pm m})
        + C}{\mu_{n-1,n}(b^{\pm m})
        - C} 
        \hspace{0.5em} < \hspace{0.5em}
        \max\biggl\{\frac{\lambda_{1,2}(a^{\pm1})}{\lambda_{n-1,n}(a^{\pm 1})}\biggr\}
    \]
    for all $m\in \N$; cf. condition \ref{cdn2:thm:lc_bd}.
    
    Therefore, \eqref{eqn:apn:3} implies that in the cases when $w =a^{p_r}b^{q_r}\cdots a^{p_1}b^{q_1}$ is a reduced word with $p_r,q_1\ne 0$, $w \in \<a\>$, or $w\in \<b\>$ (cf. condition \ref{cdn2:thm:lc_bd}), then
    \begin{equation}\label{eqn:apn:4}
        \frac{\lambda_{1,2}(w)}{\lambda_{n-1,n}(w)} 
        \LE 
        \max\biggl\{\frac{\lambda_{1,2}(a^{\pm1})}{\lambda_{n-1,n}(a^{\pm 1})}\biggr\}.
    \end{equation}
    Since every word in the letters $a$ and $b$ is conjugate to one of the above possibilities, and eigenvalues of a matrix are conjugation invariant, we see that the inequality \eqref{eqn:apn:4} holds for all words $w$ in letters $a$ and $b$.
    Thus, the Jordan projections of $a$ and $a^{-1}$ lie on boundary of the limit cone $\LL_{\<a,b\>}$.
\end{proof}

\end{document}